\newcommand{\N}{\mathbb N}
\newcommand{\R}{\mathbb R}
\newcommand{\E}{\mathbb E}
\newcommand{\NN}{\mathcal N}
\renewcommand{\P}{\mathbb P}
\renewcommand{\l}{\lambda}
\newcommand{\gnl}{G_N^\lambda}
\newcommand{\1}[1]{\mathbf{1}\!_{\left\{#1\right\}}}
\newcommand{\ind}[1]{\mathbf{1}\!_{\,#1}}
\newcommand{\wpi}{\widehat{\pi}}
\newcommand{\e}{{\rm e}}
\newcommand{\vertiii}[1]{{\left\vert\kern-0.25ex\left\vert\kern-0.25ex\left\vert #1 
		\right\vert\kern-0.25ex\right\vert\kern-0.25ex\right\vert}}
\def\be{\begin{eqnarray}}
\def\ee{\end{eqnarray}}
\def\ben{\begin{eqnarray*}}
\def\een{\end{eqnarray*}}
\newtheorem{prop}{Proposition}[section]
\newtheorem{defi}[prop]{Definition}
\newtheorem*{thm1bis}{Theorem \ref{existence}'}
\newtheorem*{thm2bis}{Theorem \ref{maintheorem}'}
\newtheorem{lem}[prop]{Lemma}
\newtheorem{thm}[prop]{Theorem}
\newtheorem{rem}[prop]{Remark}
\newtheorem{cor}[prop]{Corollary}
\newtheorem*{assumption}{Domination $\&$ Convergence Assumption}
\newtheorem*{assumptionA2'}{Assumption A2'}
\newcounter{example}
\title{
Branching process and homogeneization for epidemics on  spatial random graphs 
}
\author[V.\ Bansaye]{Vincent Bansaye}
\address{{\'Ecole Polytechnique} CMAP, \'Ecole Polytechnique,  Route de Saclay
	91128 Palaiseau Cedex, FRANCE.}
\email{vincent.bansaye@polytechnique.edu}
\author[M.\ Salvi]{Michele Salvi}
\address{Dipartimento di Matematica, Universit\`a di Roma Tor Vergata, Via della ricerca scientifica 1, 00133, Rome, Italy}
\email{salvi@mat.uniroma2.it} 
\urladdr{\url{https://www.mat.uniroma2.it/~salvi/}}
\begin{document}

\maketitle

\begin{abstract}
Consider a graph where the sites are distributed in space according to a Poisson point process on $\mathbb R^n$. We study a population evolving on this network, with individuals jumping between sites with a rate which decreases exponentially in the distance. Individuals give also birth (infection) and die (recovery) at constant rate on each site. First,  we   construct the process, showing that it is well-posed even when starting from non-bounded initial conditions. Secondly, we prove hydrodynamic limits in a diffusive scaling. The limiting process follows a deterministic reaction diffusion equation. We use stochastic homogenization to characterize its diffusion coefficient as the solution of a variational principle.
The proof involves in particular the extension of a classic Kipnis--Varadhan estimate to cope with the non-reversiblity of the  process, due to births and deaths. 
This work is motivated by the approximation of epidemics on large networks and the results are extended  to more complex graphs including percolation of edges.
\end{abstract}

\medskip

\noindent\emph{Key words:} Epidemics, branching process, random graphs, stochastic homogenization.

\medskip
\noindent\emph{MSC 2020:}
92D25, 
 05C81, 
 35B27. 
 
 \medskip

\section{Introduction and main results}

Consider a graph $\mathcal G$ whose vertices $V$ are placed according to a Poisson point process on $\R^n$ with $n\geq 2$ and with edge set $E$ drawn from some distribution. Attach to each unoriented edge $\{x,y\}\in E$ a rate $r(x,y)=r(y,x)=\e^{-\|x-y\|}$, where $\|\cdot\|$ indicates the Euclidean distance. Consider individuals that perform independent random walks on $\mathcal G$ with jump rates $r(x,y)$. They also give birth to new individuals at rate $b\geq 0$ and die at rate $d\geq 0$. The main goal of the present work is to describe the limiting behaviour of this particle system under a diffusive rescaling. 

\smallskip 

The motivation for studying this kind of process comes from the analysis of real-world networks with agents moving on spatially inhomogeneous structures. Metapopulation models (or metacommunity for several species) 
aim at describing the habitat of a population as a collection of patches. Exchanges between 
two patches can depend on several features, in particular the distance. 
From the pioneering works of Levins \cite{Levins}, metapopulations  have a long story in biology and ecology.
Issues come  from   conservation of species (see e.g. \cite{BaSol,BanLamb}), evolution of dispersion (see e.g.~\cite{Hastings}), impact of fragmentation of habitats (see e.g.~\cite{Hiebeler}) and effect of heterogeneity of habitats (see e.g.~\cite{Pulliam}). While for the sake of simplicity one would consider a small number of patches, applications often ask for the study of large metapopulations.
As far as we know the literature, large metapopulations are considered 
either in a mean field approximation (see \cite{Levins}) or with a spatially explicit large structure using cellular automates and simulations
(\cite{BaSol}) or in a periodic environment.
Random networks provide a relevant mathematical framework to analyze models which  do not  fall in the mean field  approximation and that would  make explode the parameters' complexity if considered as large  explicit graphs. Rigorous works which combine  motion and demography (birth, death, infections...) on large random landscapes are rare for now. Our original interest is understanding how an epidemics would spread on such structures. As a driving example, one can consider the spread of an infection among cattle on the French network of farms, see \cite{Qi}. 
In this first work we identify the diffusive behaviour of an epidemics in its first stages, which corresponds to the classic branching process approximation for small ratio of infected individuals.
In this case $b$ represents the contamination rate and $d$ is the recovery rate for the infected population. This approximation is valid on a time window where the infected population remains  locally small compared to the population size, see e.g.~\cite{BD95,BR13, M19} for the classical mixed SIR model. 

\smallskip

From a mathematical point of view, the theory of hydrodynamic limits for interacting particle systems has a long story (see \cite{KL98} for a classic account). The scaling limit of population processes with births and deaths on simpler spatial structures has been analyzed for example in \cite{M96}, in \cite[Chapter 4]{DP06} and in \cite{CFV22}. In our setting, 
the first challenge is represented by the unboundedness of the jump rates: on the one hand a site in $V$ can have a huge number of close-by neighbours, so that the jump rate of a single individual can be arbitrarily large at that site; on the other hand there is no restriction on the number of individuals that can occupy a given site. Proving that such a process is well-posed is in itself not trivial: both classic and more recent techniques for proving existence fail to apply to our framework.
The second challenge is represented by the irregularity of the support $V$ combined with the lack of reversibility of the system, due to births and deaths. In order to study the limiting behaviour of the process, we need to gather  approaches coming from statistical mechanics and mathematical biology.
To be more precise, the way we can cope with the random geometry of the underlying graph is through stochastic homogenization and in particular the results of \cite{F22}. The theory of homogenization, first started in a deterministic context by analysts, describes how the microscopic irregularities of a medium affect the macroscopic behavior of the system. It is by now well understood how to use this technique to derive hydrodynamic limits for reversible particles systems, see e.g.~\cite{GJ08, F22b}. Yet, to our knowledge, one fundamental requirement for obtaining these results has  been the reversibility of the process. 
In our context  we need to adapt some tools to non--reversible population models, see e.g.~\cite{K81, BM15} for general references.  In particular,  a key ingredient  of this work is the extension of an inequality for the supremum of a particle process due to Kipnis and Varadhan. This estimate is required for the proof of tightness, for the identification of the limit and to show that this limit has a density with respect to the Lebesgue measure.

\subsection{Model and main results}
For some probability space $(\Omega, \P, \mathcal F)$ and $\omega\in\Omega$, let $V= V(\omega)$ be the points of a Poisson point process on $\R^n$ with $n\geq 2$ and intensity $\gamma>0$ under $\P$. Let $ E= E(\omega)=\{\{x,y\},\,x,y\in V\}$ be the set of unoriented edges between the points of $V$. We will consider at first the complete graph $\mathcal G=(V,E)$ as a support for our particle system, while in Section \ref{randomgraphs} we will discuss how to extend our results by generalizing $\mathcal G$ via bond percolation.
Given $\omega$ and a configuration of particles $\eta\in \N^{V}$ consider the transitions
\begin{align}\label{rates}
\eta\longrightarrow
\begin{cases}
\eta^{x,y}\quad
&\mbox{with rate }\eta (x)r(x,y)\\
\eta^{x,+}\quad
&\mbox{with rate }b \,\eta(x)\\
\eta^{x,-}\quad
&\mbox{with rate }  d\,\eta(x)
\end{cases}
\end{align}
where $\eta^{x,y}=\eta-\ind{x}+\ind{y}$ is the configuration obtained from $\eta$ by subtracting one particle in $x\in V$ and adding one in $y\in V$, $\eta^{x,+}=\eta+\ind{x}$ adds one particle in $x\in V$ and $\eta^{x,-}=\eta-\ind{x}$ has one particle less in $x\in V$. The positive numbers 
$$
r(x,y)=r(y,x)=\e^{-\|x-y\|}
$$ 
are the jump rates for each particle to go from point $x\in V$ to point $y\in V$, and vice-versa. For simplicity we set $r(x,x)=0$ for all $x\in V$. We let 
$$
r(x):=\sum_{y\in V:\,\{x,y\}\in E}r(x,y)
$$ 
be the total jump rate of a particle at site $x\in V$. It is not hard to show that, $\P$--almost surely, $r(x)$ is finite for every $x\in V$. The parameters $b,d\geq 0$ are the individual rate of birth and death of the particles, respectively. 
\smallskip

For a given realization $\mathcal G(\omega)$ of the graph, we introduce a probability space with measure $P^\omega$ under which we will construct our particle process. $E^\omega$ indicates the associated expectation. Let $\eta_0$ be the initial configuration of particles. 
 Our first result establishes that, for $\P$--almost every $\omega$, there exists a Markov process with jump rates given by \eqref{rates} as soon as $\eta_0$ has uniformly bounded expectation on each site.
\begin{thm}\label{existence}
	For $\P$-a.a.~$\omega\in\Omega$ the following holds.
	Let $\eta_0$ be a  random variable on  $\N^V$ such that, for some $M\in\N$, one has $\E^\omega[\eta_0(x)]\leq M$ for all $x\in V$. 
	 Then, 
	 for all $T>0$,  there exists a Markov process $(\eta_t)_{t\in[0,T]}$ with initial value $\eta_0$ and 
	  paths in the Skohorod space $\mathcal D([0,T],\N^{V})$ that satisfies the following:
	  	for functions $G$ compactly supported in $\R^n$, the process $(M^G_t)_{t\geq 0}$ defined by
	   $$
	  M^G_t=\sum_{x\in V} \eta_t(x)\, G(x)-\sum_{x\in V} \eta_0(x)\, G(x) - \int_0^t  \mathcal L f_G(\eta_s) \,{\rm d}s
	  $$
	  is a martingale. Here  $f_G:\N^V\to \R$ is the function $f_G(\eta)=\sum_{x\in V} G(x)\eta(x)$ and 
 \begin{align}\label{generator}
 \mathcal L f_G(\eta)
 =&\sum_{x,y\in V}\eta(x)r(x,y)\big(G(y)-G(x)\big)+\sum_{x\in V}\eta(x)\big(b- d\big) G(x)\,.
 \end{align}
	  
%
%
%
%
%
%
%
%
%
%

\end{thm}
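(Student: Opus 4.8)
The plan is to exploit the \emph{linearity} of the rates in \eqref{rates}: since every transition occurs at a rate proportional to the number $\eta(x)$ of particles present, the particles never interact, and the whole system is a superposition of independent branching random walks (BRWs), one attached to each particle of $\eta_0$. I would therefore first construct a single BRW started from one particle at a fixed site, then superpose independent copies indexed by the particles of $\eta_0$, and finally control the (infinite) superposition on compact sets by means of a quenched first--moment estimate. The martingale identity for $M^G_t$ would then follow from Dynkin's formula applied lineage by lineage.

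For a single BRW the genealogy is a linear birth--death process with per-capita birth rate $b$ and death rate $d$; this is a Yule-type process, so the number of individuals ever alive in $[0,T]$ is finite a.s.\ and the genealogical tree is non-explosive. Each lineage traces, during its lifetime, a path of the random walk with jump rates $r(x,y)$, so it remains to rule out that a single such walk performs infinitely many jumps in finite time. \emph{This is the main obstacle}: the total jump rate $r(x)$ is unbounded over $V$, so a priori a walker could accelerate indefinitely by visiting sites with more and more close-by neighbours. Here I would use that the walk is reversible with respect to the counting measure on $V$ (because $r(x,y)=r(y,x)$), so that the environment seen from the particle is stationary under the Palm measure $\P_0$ of the point process. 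By the Mecke formula one has $\E_{\P_0}[r(0)]=\gamma\int_{\R^n}\e^{-\|z\|}\,\dif z<\infty$, whence $\E_{\P_0}\big[\int_0^T r(X_s)\,\dif s\big]=T\,\E_{\P_0}[r(0)]<\infty$. This forces the expected number of jumps from a fixed starting vertex to be finite for $\P$-a.e.\ environment, so the walk is non-explosive; a countability/translation argument then upgrades non-explosion to all starting vertices of $V$ simultaneously, for $\P$-a.a.\ $\omega$.

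Having constructed each BRW, I would define $\eta_t(x)$ as the number of particles located at $x$ at time $t$, summed over all lineages of all initial particles, and prove local finiteness through the quenched first moment. By the many-to-one formula for the BRW, $\E^\omega[\eta_t(x)]=\e^{(b-d)t}\sum_{x_0\in V}\E^\omega[\eta_0(x_0)]\,q_t(x_0,x)$, where $q_t$ is the transition kernel of the single walk. Using the symmetry $q_t(x_0,x)=q_t(x,x_0)$ and the conservativeness $\sum_{x_0}q_t(x,x_0)=1$ (which is exactly the non-explosion proved above), together with $\E^\omega[\eta_0(x_0)]\le M$, this yields the uniform bound $\E^\omega[\eta_t(x)]\le M\e^{(b-d)t}$. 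Summing over the finitely many $x\in V$ in any compact set shows the configuration is locally finite, so $(\eta_t)$ takes values in $\N^V$; and since a first--moment bound on the jumps across the boundary of a compact set $K$ is finite (because $\sum_{y\in K}r(y)<\infty$), only finitely many birth/death/jump events touch $K$ in $[0,T]$, which gives càdlàg paths in $\mathcal D([0,T],\N^V)$.

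Finally, for a single BRW, Dynkin's formula applied to $f_G$ with $G$ compactly supported gives directly that $M^G_t$ is a martingale with the stated generator: the three transition types contribute the increments $G(y)-G(x)$, $+G(x)$ and $-G(x)$ at rates $\eta(x)r(x,y)$, $b\,\eta(x)$ and $d\,\eta(x)$, which sum to $\mathcal L f_G$ as in \eqref{generator}. Summing over initial particles preserves the martingale property once $\E^\omega[|f_G(\eta_t)|]$ and $\E^\omega[\int_0^T |\mathcal L f_G(\eta_s)|\,\dif s]$ are finite; both follow from the bound $\E^\omega[\eta_t(x)]\le M\e^{(b-d)t}$ and the compact support of $G$, the jump term only involving edges meeting $\mathrm{supp}\,G$, whose total rate is finite. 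The Markov property is inherited from the independence of the lineages together with the (strong) Markov property of each BRW. I expect the single-walk non-explosion to be the only genuinely delicate point; once the Palm-stationarity of the environment process is in hand, the remaining steps are bookkeeping organized around the first--moment estimate.
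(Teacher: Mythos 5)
Your overall architecture --- decomposition into independent branching random walks, local finiteness via a quenched first-moment bound, Dynkin's formula lineage by lineage --- is sound and actually close in spirit to the paper's construction (which labels particles \`a la Ulam--Harris--Neveu and tracks ``ghosts'' instead of invoking many-to-one); your bound $\E^\omega[\eta_t(x)]\le M\e^{(b-d)t}$, obtained from symmetry and conservativeness of the single-walk kernel, is cleaner than the ghost-counting estimate of Lemma \ref{capogiro}. However, the step you yourself identify as the crux --- non-explosion of a single walk --- is circular as written. The assertion that ``the environment seen from the particle is stationary under $\P_0$'', and the identity $\E_{\P_0}\big[\int_0^T r(X_s)\,\dif s\big]=T\,\E_{\P_0}[r(0)]$ you extract from it, presuppose that $X_s$ is defined on all of $[0,T]$, i.e.\ exactly the non-explosion being proved. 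If explosion had positive probability, the walk would only carry its minimal (killed) sub-Markovian semigroup, the environment process would exist only up to the explosion time $\zeta$, and $\P_0$ would in general be merely sub-invariant for it; the Fubini computation is then not available. Every standard ``environment viewed from the particle'' theorem for continuous-time walks takes conservativeness as a hypothesis, so it cannot be quoted here.

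The gap is repairable, but the repair is a genuine argument rather than bookkeeping. Route (a): pass to the discrete-time jump chain $(Y_n)$, which is always well defined since $r(x)<\infty$ for all $x\in V$ a.s.; its rates are reversible with respect to $r(\cdot)$, so its environment process is stationary under the size-biased Palm measure $r(0)\,\P_0/\E_0[r(0)]$ (this is where $\E_0[r(0)]=\gamma\int\e^{-\|z\|}\,\dif z<\infty$ legitimately enters); Birkhoff's theorem then gives $\sum_n 1/r(Y_n)=\infty$ a.s.\ (the Ces\`aro limit is a conditional expectation of the strictly positive variable $1/r(0)$), which is precisely the non-explosion criterion, and the refined Campbell theorem plus the fact that the walk from any vertex reaches any other in one jump with positive probability upgrades this to all starting vertices. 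Route (b): stay in continuous time but work with the minimal symmetric kernel $p_t^{\min}$; the Palm mass-transport principle and sub-Markovianity give $\E_{\P_0}[(p_s^{\min}r)(0)]\le\E_{\P_0}[r(0)]$, so the expected number of jumps before $T\wedge\zeta$ is at most $T\,\E_{\P_0}[r(0)]<\infty$, forcing $\zeta>T$ a.s. By contrast, the paper avoids all Palm/ergodic input at this stage: it proves non-explosion by finite-volume approximation together with two quantitative estimates, namely $\max_{x\in B_N\cap V}r(x)\le C\log N$ (Lemma \ref{tyran}) and the displacement bound of Proposition \ref{seggiolino}, which also yield the stabilization property \eqref{siegeauto} used to handle infinitely many initial particles. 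Once your non-explosion step is patched by (a) or (b), the remaining steps of your proposal (local finiteness, finitely many events touching a compact set, the martingale identity and its integrability) go through essentially as you describe.
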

We point out that Theorem \ref{existence} implies that $\mathcal L$ in \eqref{generator} coincides with the extended generator (see \cite[Definition 14.15]{D18}) of the process $(\eta_t)_{t\in[0,T]}$  on the set of functions of the form $f_G(\eta)=\sum_{x\in V} G(x)\eta(x)$ with $G$ compactly supported on $\R^n$.

\medskip

Our second result establishes the hydrodynamic limit of the process. Let $\mathcal M(\R^n)$ be the Polish space of non-negative Radon measures on $\R^n$ endowed with the vague topology. For $\pi\in \mathcal M(\R^n)$ and a continuous function $G\in\mathcal C(\R^n)$ we write $\langle\pi, G\rangle=\int_{\R^n} G(y)\,\pi({\rm d}y)$.  We consider a scaling parameter $N\in \N$ and associate to each element $\eta \in\N^{V}$ the empirical measure
$\pi^{N}=\pi^{N}(\eta)=N^{-n}\sum_{x\in V} \eta(x)\delta_{x/N}\in \mathcal {M}(\R^n)$, where $\delta_y$ represents a Dirac mass at  $y\in\R^n$. Conversely, we can recover $\eta$ from $\pi^N$ via $\eta(\cdot)=\eta(\pi^{N})(\cdot)=
N^n\pi^{N}(\cdot/N)$, so that for any fixed $N$ we may use $\pi^N$ and $\eta$ indifferently. In this work, we are interested in the regime where the motion is faster than births and deaths (resp.~of infection and recovery rates for epidemics). Thus, for a given $N$, we introduce now the  process  $\eta^N$ with sped--up motion. For $G$ compactly supported, its generator is given by (recall that $f_G(\eta)=\sum_{x\in V} G(x)\eta(x)$)
\begin{align}
	  \mathcal L^N f_G(\eta)
	  =&\sum_{x\in V}\eta(x)L^NG(x/N)+\sum_{x\in V}\eta(x)\big(b- d\big) G(x/N)\,.
\end{align}
Here
 \begin{align}\label{generatorrw}
L^NG(x/N)=\sum_{y\in V} N^2r(x,y) \big(G(y/N)-G(x/N)\big)
\end{align}
is the generator of the random walk on $V/N:=\{x/N:\,x\in V(\omega)\}$ with transition rates $N^2r(\cdot,\cdot)$.
 The associated measure-valued process is defined as 
\begin{align}\label{durian}
\pi_t^N:=\frac{1}{N^n}\sum_{x\in V} \eta_t^N(x)\delta_{x/N}\;.
\end{align}
Theorem \ref{existence} guarantees that, for all $T>0$ and fixed $N\in\N$, $(\pi^N_t)_{t\in[0,T]}$ is a well-defined Markov process with values in $\mathcal D([0,T],\mathcal M(\R^n))$, the space of measure-valued c\`adl\`ag processes.

\smallskip

For the scaling limit, we need to consider initial conditions such that the tails of $\eta_0$ are dominated by a product of (translated) Poisson distributions indexed by $V$, as precisely defined here below. 
This allows in particular to invoke the existence and characterization stated in Theorem \ref{existence}. For example, one can take configurations with a  number of particles that is a constant on each site or that is distributed as i.i.d.~Poisson random variables, or a sum of the two.
We also need that the initial conditions converge as $N$ goes to infinity.
For a given realization of the graph $\omega\in\Omega$, we make  thus
the following assumption.
 \begin{assumption} 
 The sequence of random configurations  $(\eta_0^N)_{N\in\N}$ satisfies the following:
 \begin{itemize}
 	\item[(i)] There exists $M\in \mathbb N_0$ and $\rho>0$ 
 	such that for any $N\in \N$, $x\in V$ and for any $A\subset V$ and $(n_x)_{x\in A}\in\N^{|A|}$,
 	\begin{align}\label{domiziano2}
 	P^{\omega}\Big(\forall x \in A, \, \eta_0^N (x) \geq M+n_x \Big)
 	\leq \prod_{x\in A}\Big(\sum_{j=n_x}^\infty \frac{\rho^j\e^{-\rho}}{j!}\Big).
 	\end{align}
 	\item[(ii)] There exists a bounded Borel function $\rho_0:\,\R^n\to [0,\infty)$
 	such that, for any $\mathcal C^\infty$ function with compact support $G\in \mathcal C^\infty_c(\R^n)$,
 	\begin{align}\label{singapore}
 	\lim_{N\to\infty} N^{-n} \sum_{x\in V} \eta_0^N(x) G(x/N)= \int_{\R^n}G(x)\rho_0(x)\,{\rm d}x\,
 	\end{align}
 	in $P^{\omega}$--probability.
 \end{itemize} 
  \end{assumption}


  Consider now $B(\Omega)$, the family of bounded Borel functions on $\Omega$, 
and let $\sigma^2\geq 0$ be characterized by the variational formula  
\begin{align}\label{variazionale}
\sigma^2:=\frac 12\inf_{\psi\in B(\Omega)}\E_{0}\Big[\sum_{y\in V}r(0,y)\big(y_1 +\psi(\theta_y\omega)-\psi(\omega)\big)^2\Big]\,.
\end{align}
Here $y_1$ denotes the first coordinate of $y\in\R^n$ and $\theta_y\omega$ is the environment translated by $y$ (see Section \ref{input} for the precise meaning of this). The expectation $\E_0$ is taken with respect to the Palm measure relative to the underlying Poisson point process, which can be obtained by just adding to the configuration a point  at the origin (see \cite{DVJ08} for an account on Palm measures).  Calling $I_n$ the $n$-dimensional identity matrix, we point out that $2 \sigma^2 I_n$ is the diffusion matrix  of the Brownian motion obtained by rescaling diffusively the random walk on the Poisson point process with transition rates $r(x,y)$, see e.g.~\cite{F22}.
\begin{thm}\label{maintheorem}
	For $\P$-a.a.~$\omega\in\Omega$ the following holds.
Let $(\eta_0^N)_{N\in\N}$ be a sequence of random variables on  $\N^V$ which satisfies the Domination $\&$ Convergence Assumption for some  bounded Borel function $\rho_0:\,\R^n\to [0,\infty)$.
Then the sequence of processes $\{(\pi^N_t)_{t\in[0,T]}\}_{N\in\N}$ with initial  value $\pi^N_0=\pi^N(\eta_0^N)$ converges in law in $\mathcal D([0,T],\mathcal M(\R^n))$ to the deterministic trajectory $(\rho(t,u)\,{\rm d}u)_{t\in[0,T]}$, where $\rho(\cdot,\cdot):[0,T]\times\R^n\to\R$ is the unique weak solution of the problem 
	\begin{align}\label{reactiondiffusion}
	\begin{cases}
	\quad\partial_t \rho\,=\,\sigma\Delta \rho +(b-d)\rho \\
	\rho(0,\cdot)\,=\,\rho_0\,
	\end{cases}.
	\end{align}
\end{thm}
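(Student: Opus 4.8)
The plan is to follow the classical martingale route to hydrodynamic limits (as in \cite{KL98}), handling the geometric disorder of $\mathcal G$ through the stochastic homogenization of \cite{F22} and absorbing the non-reversibility induced by $b,d$ into a suitably extended Kipnis--Varadhan estimate; throughout, everything is carried out for $\P$-a.a.~$\omega$. First I would fix $G\in\mathcal C^\infty_c(\R^n)$ and apply the martingale characterization of Theorem \ref{existence} to the sped-up process $\eta^N$, so that after dividing by $N^n$ one gets
\[
\langle\pi^N_t,G\rangle=\langle\pi^N_0,G\rangle+\int_0^t\Big(\tfrac{1}{N^n}\sum_{x\in V}\eta^N_s(x)\,L^NG(x/N)+(b-d)\langle\pi^N_s,G\rangle\Big)\,{\rm d}s+M^{N,G}_t,
\]
where $M^{N,G}$ is a martingale. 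A direct computation of its predictable quadratic variation from the rates \eqref{rates}, using the exponential decay of $r(\cdot,\cdot)$ and the smoothness of $G$, shows it to be of order $N^{-n}$, hence negligible as $N\to\infty$. The Domination \& Convergence Assumption~(i) is what allows me to control the averages $N^{-n}\sum_{x}\eta^N_s(x)(\cdots)$ uniformly in $N$ and $s\in[0,T]$ in expectation, via stochastic comparison with the dominating product of Poisson laws in \eqref{domiziano2} together with the branching structure of the dynamics.

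With these bounds in hand I would prove tightness of $\{(\pi^N_\cdot)\}_N$ in $\mathcal D([0,T],\mathcal M(\R^n))$ through Aldous's criterion applied to the real-valued processes $t\mapsto\langle\pi^N_t,G\rangle$ for $G$ ranging over a countable dense family, controlling the drift and the martingale term by the estimates above. The heart of the argument is the homogenization replacement asserting that, for a.e.~$\omega$,
\[
\int_0^t\frac{1}{N^n}\sum_{x\in V}\eta^N_s(x)\big(L^NG(x/N)-\sigma\,\Delta G(x/N)\big)\,{\rm d}s
\]
tends to $0$ in probability as $N\to\infty$, where $\sigma$ is the homogenized coefficient determined by \eqref{variazionale}. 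Following \cite{F22}, I would introduce the corrector $\psi$ attaining (or nearly attaining) the infimum in \eqref{variazionale} and use $\psi(\theta_\cdot\omega)$ to solve the corrector equation associated with the walk generator $L^N$, splitting $L^NG(x/N)-\sigma\,\Delta G(x/N)$ into an exact fluctuation (gradient) term plus a lower-order remainder. Integrated in time against the empirical measure, the fluctuation term is the time integral of a mean-zero additive functional of the environment process viewed from a tagged particle; since births and deaths destroy the self-adjointness of the generator, the classical Kipnis--Varadhan control of $\sup_{t\le T}\big|\int_0^t(\cdots)\,{\rm d}s\big|$ does not apply directly. I expect this to be the principal obstacle: one must decompose the generator into symmetric and antisymmetric parts and prove a non-reversible version of the estimate, controlling the antisymmetric contribution through a sector-type condition so that the fluctuation integral still vanishes in $L^2$.

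Combining the previous steps, any subsequential limit $\pi_\cdot$ of the tight family satisfies, for every $G\in\mathcal C^\infty_c(\R^n)$,
\[
\langle\pi_t,G\rangle=\langle\pi_0,G\rangle+\int_0^t\big(\sigma\langle\pi_s,\Delta G\rangle+(b-d)\langle\pi_s,G\rangle\big)\,{\rm d}s,
\]
with $\pi_0=\rho_0\,{\rm d}u$ granted by Assumption~(ii) and the martingale term gone by its quadratic-variation bound. To upgrade this identity to a genuine weak solution of \eqref{reactiondiffusion} I must show that $\pi_s$ is absolutely continuous with respect to Lebesgue measure; here the extended Kipnis--Varadhan estimate reappears, now to produce an energy bound forcing $\pi_s=\rho(s,\cdot)\,{\rm d}u$ with $\rho(s,\cdot)$ in an appropriate Sobolev space. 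Finally, the linear parabolic problem \eqref{reactiondiffusion} with bounded initial datum $\rho_0$ admits a unique weak solution (by a standard energy/Gr\"onwall or Duhamel argument via the heat semigroup), which identifies every limit point with $\rho$ and thereby promotes subsequential convergence to convergence of the full sequence, completing the proof.
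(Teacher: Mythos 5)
Your high-level scheme (martingale problem, Aldous tightness, homogenization replacement, identification, uniqueness) is the same as the paper's, but the step you yourself single out as the heart of the matter --- the treatment of non-reversibility --- contains a genuine gap. You propose to split the generator into symmetric and antisymmetric parts and control the antisymmetric part via a sector-type condition. That framework presupposes a stationary invariant measure with respect to which the decomposition $L=S+A$ and the Dirichlet-form comparison are defined. Here no such measure exists: the birth--death mechanism is non-conservative, the expected population grows like $\e^{(b-d)t}$, the Poisson product measure $\nu_{0,\rho}$ is not preserved by the dynamics, and there is no stationary ``environment viewed from a tagged particle'' because particles branch and disappear. So the sector-condition route cannot even be set up, let alone carried out. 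The paper's solution is of a completely different nature: in Lemma \ref{stima} the population is decomposed along the branches $j$ of the genealogical (Ulam--Harris) tree, each branch sub-population is dominated by a \emph{reversible} system of independent random walks whose initial condition is a thinning (percolation) of the initial particles with retention probability $p_j=\P(\mathrm{Poisson}(bT)\geq \vert j\vert)$, the classical reversible estimate of \cite{KV86} (Lemma \ref{KVconservative}) is applied to each such dominating system, and the union bound over branches is summable thanks to the super-exponential decay of $p_j$ in $\vert j\vert$.

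Two further discrepancies are worth flagging. First, the paper never introduces correctors or works with the variational formula \eqref{variazionale} directly; it uses the resolvent-regularized test function $G_N^\l$ of \eqref{glambda}, for which $L^NG_N^\l=\l(G_N^\l-G)+\sigma^2\Delta G$ holds exactly, so the homogenization replacement reduces to showing $\sup_{t\leq T}\vert\langle\pi_t^N,G_N^\l-G\rangle\vert\to 0$ (Lemma \ref{gvsg}); this sidesteps the regularity problem of $L^NG$ on the irregular support, which your direct expansion in $L^NG$ would face (the paper only ever controls $N^{-n}\Vert L^NG\Vert_{L^2(\mu_N)}\to 0$, Lemma \ref{tiramisu}). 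Note also that the Kipnis--Varadhan bound needed here controls the supremum in time of the empirical measure tested against a fixed function, not the additive functional $\int_0^t(\cdots)\,{\rm d}s$ you have in mind. Second, absolute continuity of the limit is not obtained from an energy/Sobolev bound: the paper passes the sup-bound \eqref{allesliebe} to the limit via the Portmanteau theorem, getting $\sup_{t\leq T}\langle\pi_t,H\rangle\leq c(T)\Vert H\Vert_{L^1({\rm d}x)}$ for all non-negative $H\in\mathcal C_c^\infty$, which forces a \emph{bounded} density. Until the sector-condition step is replaced by an argument valid for non-stationary branching dynamics --- such as the genealogical domination above --- your proof does not go through.
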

Since the sequence of processes converges in distribution to a deterministic process, we obtain immediately the following convergence in  probability.
\begin{cor}
	Under the hypothesis of Theorem \ref{maintheorem} we have that, for all $t>0$, $G\in C_c(\R^n)$ and $\varepsilon >0$,
	\begin{align*}
	\lim_{N\to\infty}P^\omega\Big(\Big|N^{-n}\sum_{x\in V}G(x/N)\eta_t(x)-\int_{\R^n} G(x)\rho(x,t)\,{\rm d}x\Big| \geq \varepsilon \Big)=0\,.
	\end{align*} 
\end{cor}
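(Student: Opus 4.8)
The plan is to deduce this fixed-time convergence in probability directly from the process-level convergence in law of Theorem \ref{maintheorem}, exploiting crucially that the limit is deterministic.

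First I would rewrite the quantity of interest in terms of the empirical measure. Recalling the definition \eqref{durian} of $\pi^N_t$ and the pairing notation $\langle\pi,G\rangle=\int_{\R^n}G\,{\rm d}\pi$, one has
$$
N^{-n}\sum_{x\in V}G(x/N)\eta^N_t(x)=\langle \pi^N_t,G\rangle,
$$
while the target quantity equals $\langle \rho(t,\cdot)\,{\rm d}u,G\rangle$. Hence the statement is precisely that $\langle \pi^N_t,G\rangle\to\langle \rho(t,\cdot)\,{\rm d}u,G\rangle$ in $P^\omega$-probability.

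Second, I would pass from the path-space convergence to the fixed-time marginal via the continuous mapping theorem. Consider the evaluation functional $\Phi_t:\mathcal D([0,T],\mathcal M(\R^n))\to\R$, $\Phi_t(\pi_\cdot)=\langle\pi_t,G\rangle$. Since $G\in C_c(\R^n)$ is an admissible test function for the vague topology, $\pi\mapsto\langle\pi,G\rangle$ is continuous on $\mathcal M(\R^n)$; therefore $\Phi_t$ is continuous at every trajectory that is continuous at time $t$ in the Skorokhod topology. The limiting trajectory $t\mapsto\rho(t,\cdot)\,{\rm d}u$ is continuous on all of $[0,T]$, because the bounded solution of \eqref{reactiondiffusion} is continuous in time and thus $t\mapsto\langle\rho(t,\cdot)\,{\rm d}u,G\rangle$ is continuous for every $G\in C_c(\R^n)$. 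As the limiting law in Theorem \ref{maintheorem} is the Dirac mass at this continuous trajectory, the set of discontinuities of $\Phi_t$ carries zero limit-mass, and the continuous mapping theorem yields $\langle\pi^N_t,G\rangle\to\langle\rho(t,\cdot)\,{\rm d}u,G\rangle$ in law. Finally, since the limit is a deterministic constant, convergence in law to it is equivalent to convergence in probability, giving the claim.

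The only genuinely delicate point is the second step: in the Skorokhod topology the projection $\pi_\cdot\mapsto\pi_t$ is in general discontinuous at trajectories having a jump at $t$, so one must verify that the fixed time $t$ is a continuity point of the limit. This is exactly where the time-continuity of the PDE solution enters: because the limiting trajectory is the fixed continuous path $\rho(t,\cdot)\,{\rm d}u$, no jump occurs at $t$, and the continuous mapping theorem applies with no additional conditioning.
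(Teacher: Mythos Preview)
Your proof is correct and follows essentially the same approach as the paper, which simply remarks that convergence in distribution to a deterministic limit yields convergence in probability. You have carefully spelled out the one implicit step the paper leaves to the reader, namely that the time-$t$ evaluation $\pi_\cdot\mapsto\langle\pi_t,G\rangle$ is continuous at the limiting trajectory in the Skorokhod topology because that trajectory is continuous in $t$, so the continuous mapping theorem applies.
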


\smallskip

\subsection{State of the art, techniques and structure of the paper} 

The rest of the paper is substantially divided into two parts corresponding to the proofs of the two main theorems. 

\smallskip

\begin{description}
	\item[Section \ref{sectionexistence}] 
Theorem \ref{existence} establishes the well-posedness of the process. As mentioned before, our model does not seem to be treated in the previous literature. 
The case $b=d=0$ corresponds to the motion of independent random walks, a simple instance of the so called zero-range process on $\mathcal G$. The existence of the zero-range process on an arbitrary countable state space was proved in the classical work \cite{L73} and then under weaker assumptions in \cite{A82}, but the techniques developed in those works do not apply in our setting. A first requirement for those constructions is that, in some sense, the rate of jump of each particle must be uniformly bounded from above, a condition that fails in our setting due to the irregularity of $V$. 
A second problem is that in \cite{L73} and \cite{A82} one must impose a restriction on the initial configuration of particles $\eta_0$. Namely, one accepts only $\eta_0$ satisfying $\sum_{x\in V}\eta_0(x)\alpha(x)<\infty$ for some function $\alpha$ such that $\sum_{y\in V} p(x,y)\alpha(y)\leq M\alpha (x)$, where $M>0$ is a given constant and $p(x,y)$ indicates the probability to go from $x$ to $y$ when the particle jumps. In our case, again because of the irregularity of $V$, this condition would not allow us to consider, for example, initial conditions with a constant number of particles on each site. Neither more recent approaches to prove existence for general particle systems on random graphs, like \cite{GR22}, cover our model, because of the unboundedness of the jump rates.

We adopt a different approach which borrows from \cite{A82} the idea of ghost particles. In Section \ref{measurevaluedprocess} we enlarge our space and consider a richer measure-valued process where, roughly put, particles are labelled and leave a ``ghost'' behind them every time they jump to a new site. To show existence of the original process, we pass through the well-posedness of the stochastic differential equation \eqref{colosseo} associated to this richer measure-valued process. In Section \ref{finitegraph} we prove the existence when we restrict the dynamics to a finite subgraph of $V$. We also pin down a key estimate of how many particles have visited a given compact set up to time $T$ in mean, making use of the ghosts (Lemma \ref{capogiro}). In Section \ref{finiteparticles} we extend the existence of the process when considering the whole infinite graph, but under the condition of having a finite number of particles at time $0$. This is achieved by showing that the range covered by the particles stays finite almost surely, see Proposition \ref{seggiolino}. Finally, in Section \ref{full}, we include in our construction also the case of an infinite number of initial particles. 

\smallskip

\item[Section \ref{inputfromhomo}] In this section we prepare some of the technical tools that are necessary for the proof of Theorem \ref{maintheorem}.
The operator $L^N$ can be thought of as a discretization of the operator $\sigma^2\Delta$. For a given $G\in\mathcal C_c^\infty(\R^n)$, though, some difficulties arise if one tries to prove directly the convergence of $L^NG$, due to the possible lack of regularity of this last object. To overcome the problem, one wants to substitute $G$ by a regularized version $G_N^\l$ for which  $L^NG^\l_N$ directly yields the expected limit $\sigma^2\Delta G$. 
This procedure, introduced in \cite{JL08} in the context of hydrodynamic limits and further developed in \cite{GJ08}, requires results from stochastic homogenization theory. First of all, in Section \ref{input}, we prove that indeed we are allowed to use the homogenization machinery elaborated in \cite{F22}. In Section \ref{correctedempiricalmeasure} we introduce $G_N^\lambda$ and recall some results appearing in \cite{F22}, see Lemma \ref{homogenization}. They provide bounds in norm for $G_N^\lambda$ and its convergence to $G$ in $L^1$ and in $L^2$. In Lemma \ref{tiramisu} we also control the $L^2$ norm of $L^NG$, which we need later on because of the non-conservative nature of our particle system.

\smallskip

\item[Section \ref{sectionKV}] One of the main technical ingredients for proving the hydrodynamic limit of the sequence of processes $\{(\pi^N_t)_{t\in[0,T]}\}_{N\in\N}$ is the Kipnis-Varadhan estimate to control the supremum of the particle process integrated against a test function, see \cite{KV86}. The estimate in its classic form, though, is valid only for reversible processes. In Lemma \ref{KVconservative} we adapt the Kipnis-Varadhan estimate to our model without births and deaths, which is reversible but presents some issues due to the irregularity of $V$. In Lemma \ref{stima} we extend the estimate to the non-reversible setting. The idea is to look separately at each branch of the genealogical tree of the  particles in the initial configuration. The process that looks at particles of a given branch can then be dominated by another (reversible) process, to which we can apply the original Kipnis-Varadhan type of estimate. This dominating process is obtained via a percolation procedure on the particles in the initial configuration. 

\smallskip

\item[Section \ref{proofofmaintheorem}]
The strategy to prove Theorem \ref{maintheorem} follows a classical tightness and identification procedure, which relies on  the two previous sections. 
In Section \ref{l2martingale} we consider the
martingale problem and show that the process
 $M^N$ appearing in \eqref{dynkin} is an $L^2$ martingale via a truncation argument. We also prove that $M^N$ tends to $0$ in $L^2$ as $N$ tends to infinity. This enables to easily conclude the proof of tightness by Aldous' criterion.
Finally in Lemma \ref{halls} we prove that a limiting value $(\pi_t)_{t\in[0,T]}$ of the sequence $\{(\pi^N_t)_{t\in[0,T]}\}_{N\in\N}$ must have a density with respect to the Lebesgue measure and that it has to satisfy a suitable differential equation that admits a unique weak solution, cfr.~\eqref{eqlimite}. 
For simplicity of exposition the proof until this point has been elaborated for the case without deaths, $d=0$, and we conclude the section by extending the result to the general case $d>0$, see Section \ref{verydead}. 

\smallskip

\item[Section \ref{randomgraphs}] In the very last part of the paper we show that our two main theorems continue to hold if we consider a percolation procedure on the edges of the complete graph with nodes $V$. As special cases of interest for applications, we analyze the long-range percolation and scale-free percolation random graphs. We conclude in Section \ref{generalizations} with a discussion of open problems.
\end{description}

 \subsection{Notation}
For a given realization of the graph $\mathcal G=\mathcal G(\omega)$, with  $\omega\in\Omega$, we recall that $P^\omega$ is the probability measure under which we have built the process defined in Theorem \ref{existence} and $E^\omega$ is the relative expectation. 
We will make clear each time what initial distribution of particles the process is starting from, but sometimes we will further stress the initial condition with a subscript. For example, if the initial distribution of particles on $\mathcal G(\omega)$ is $\mu$, then we can write $P^\omega_\mu$.

\begin{rem}
Through most of the proofs of the paper, we will talk directly about $P^\omega$, without specifying each time that $\omega\in\Omega$ is a realization of the underlying graph sampled according to measure $\P$. 
All the processes that appear will evolve under $P^\omega$. All the claims about these processes have to be intended to be true for $\P$--almost all $\omega$, even when we do not mention it explicitly.
\end{rem}

As mentioned before, $\mathcal M=\mathcal M(\R^n)$ stands for the Polish space of non–negative Radon measures on $\R^n$ endowed with the vague topology (namely, a sequence of measures $\nu_n$ converges to a measure $\nu$ in $\mathcal M$ if $\langle\nu_n,f\rangle\to\langle\nu,f\rangle$ for all $f\in\mathcal C_c(\R^n)$). Consequently, $\mathcal D([0,T],\mathcal M(\R^n))$ indicates the space of measure-valued c\`adl\`ag processes.

\section{Existence and characterization of the process}\label{sectionexistence}

	We will prove the existence of the process just with $d=0$.
	Indeed, a positive rate of death of the particles cannot contribute to the explosion of the process in finite time (if anything, it can help prevent it). So, if the process is well-defined for $d=0$, a completely analogous construction proves that it is also well-defined for any $d>0$.

\subsection{Measure valued process}\label{measurevaluedprocess}

\smallskip

In order to prove Theorem \ref{existence} with $d=0$ we will have to consider an auxiliary process that encodes more information than $(\eta_t)_{t\in[0,T]}$ and that lives in the space of measure-valued processes. 
Let $\mathcal I:=\N\times\bigcup_{k\geq 0}\{1,2\}^k$. 
Under measure $P^\omega$, let $(\NN_{i}^{x,y})_{i\in\mathcal I,x, y\in V}$ be a collection of  Poisson point measures on $\R_+$ with intensity 
$r(x,y) \, {\rm d} t$ and recall that $r(x,x)=0$ for each $x\in V$. These Poisson point measures are chosen independent for each ordered couple $(x,y)$ and they are also independent of the initial state $\eta_0$.  Also let $(\NN_{i}^b)_{i\in\mathcal I}$  be a collection of independent Poisson point measures  on $\R_+$  with intensity   $b\,{\rm d}t$ and independent of the $\mathcal N_i^{x,y}$'s and $\eta_0$.
The interpretation is the following:
$\mathcal I$ shall be thought of as the space of labels attached to each single particle. Particles that are present at time $0$ will be just labelled with the natural numbers. If particle $i$ is present at time $t\geq 0$, we call $X_t^i\in V$ its position. Suppose a particle with label $i=n\,i_1i_2\dots i_k$, with $n\in\N$ and $k\in\N_0$ and  $i_j\in\{1,2\}$  for $j=1,\dots,k$, is at position $X_{t-}^i$ at time $t-$ and suppose that $\mathcal N_i^b(t)-\mathcal N_i^b(t-)=1$. Then particle $i$ disappears at time $t$ and is replaced by two particles with labels $n\,i_1i_2\dots i_k1$ and $n\,i_1i_2\dots i_k2$ on the same site. If instead particle $i$ is at $X_{t-}^i=x$ at time $t-$ and $\mathcal N_i^{x,y}(t)-\mathcal N_i^{x,y}(t-)=1$, then particle $i$ disappears and generates particle $i1$ at site $y$, that is, $X_t^{i1}=y$, and it leaves behind a \textit{ghost} particle on site $x$ labelled with $i$. This way of labelling the particles is commonly known as the Ulam--Harris--Neveu notation.

 Let
$$
\wpi_t=\sum_{i\in \mathcal A_t} \delta_{(i,X_t^i,a)}+\sum_{i\in \mathcal G_t} \delta_{(i,X_t^i,g)}
$$
be the measure on $\mathcal I \times V \times \{a,g\}$ keeping track of position and state of each particle. For any $i\in \mathcal I$ and 
$u\in \{a,g\}$, one has $\wpi_t(\{i\}\times V \times \{u\})\in \{0, 1\} $.
More precisely,
\begin{align}\label{alive}
\mathcal A_t:=\{ i \in \mathcal I :  \wpi_t(\{i\}\times V \times \{a\})>0\}
\end{align}
is the set of particles that are present at time $t$ and that can jump or give birth, also called {\it alive} particles, while 
\begin{align}\label{ghost}
\mathcal G_t:=\{ i \in \mathcal I :  \wpi_t(\{i\}\times V\times \{g\})>0\}
\end{align} 
is the set of ghost particles present at time $t$.

Our aim is to construct the process $(\wpi_t)_{t\geq 0}$ which satisfies $P^\omega$--a.s., on every compact set, 
\begin{align}\label{colosseo}
\wpi_t
	=\wpi_0 &+\int_0^t \sum_{i\in \mathcal A_{s-},\,y  \in V} \left(\delta_{(i1,y,a)} +\delta_{(i,X_{s-}^i,g)} -\delta_{(i,X_{s-}^i,a)}\right) \NN_{i}^{X_{s-}^i,y}({\rm d}s) \nonumber\\
	&  +\int_0^t \sum_{i\in \mathcal A_{s-}} \left(\delta_{(i1,X_{s-}^i,a)} +\delta_{(i2,X_{s-}^i,a)} -\delta_{(i,X_{s-}^i,a)}\right) \NN_{i}^{b}({\rm d}s)
\end{align}
where, for $i\in \mathcal A_t$, $X_t^i$ is the unique element $x \in V$ such that
$\wpi_t(\{i\}\times \{x\}\times \{a\})=1$. The initial configuration $\wpi_0 $ might be random under $P^\omega$.

For a Borel set $A\subseteq\R^n$, we also let
\begin{align}\label{rabarbaro}
\pi_t(A):=\wpi_t(\mathcal I\times (A\cap V)\times \{a\})
\end{align}
be the total number of alive particles in $A$ at time $t$. We will see that $(\pi_t)_{t\in [0,T]}$ corresponds to the measure-valued process $(\pi_t^N)_{t\in [0,T]}$  introduced in \eqref{durian} with $N=1$.

\smallskip

We will show the existence of the process $(\eta_t)_{t\in [0,T]}$ by constructing the richer process $(\wpi_t)_{t\in [0,T]}$ in three steps. First, in Section \ref{finitegraph}, we will show the existence of an analogous process restricted to a finite graph. We will then build on this to extend the existence of the process on the infinite graph, but only when the initial configuration has a finite number of particles, see Section \ref{finiteparticles}. Finally in Section \ref{full} we will conclude with the existence of $(\wpi_t)_{t\in [0,T]}$ under the conditions of Theorem \ref{existence}. 

\subsection{Existence of the process on a finite graph}\label{finitegraph}
In this section we deal with a version of the process $(\wpi_t)_{t\in [0,T]}$ for which the underlying spatial point process is restricted to a finite number of points.  
Fix a bounded set $B\subset \R^n$. The process $(\wpi^B_t)_{t\in[0,T]}$
is defined as the strong solution of a stochastic differential equation whose jumps are represented by the Poisson point measures introduced in Section \ref{measurevaluedprocess}. Analogously to \eqref{alive} and \eqref{ghost}, let  $\mathcal A_t^B:=\{ i \in \mathcal I :  \wpi^B_t(\{i\}\times V\cap B \times \{a\})>0\}$ and $\mathcal G_t^B:=\{ i \in \mathcal I :  \wpi_t^B(\{i\}\times V\cap B\times \{g\})>0\}$. For $i\in \mathcal A_t^B$, we write $X_t^i$ the location of particle $i$ at time $t$, that is the unique point $x\in V$ such that $\wpi_t^B(\{i,x,a\})=1$. Then $(\wpi^B_t)_{t\in[0,T]}$ is defined via 
\begin{align}\label{skype}
\wpi_t^B
=\wpi_0^B &+\int_0^t \sum_{i\in \mathcal A_{s-}^B,\, y\in V\cap B\,} \left(\delta_{(i1,y,a)} +\delta_{(i,X_{s-}^i,g)} -\delta_{(i,X_{s-}^i,a)}\right) \NN_{i}^{X_s^i,y}({\rm d}s) \nonumber\\
&+\int_0^t \sum_{i\in \mathcal A_{s-}^B} \left(\delta_{(i1,X_{s-}^i,a)} +\delta_{(i2,X_{s-}^i,a)} -\delta_{(i,X_{s-}^i,a)}\right) \NN_{i}^{b}({\rm d}s)\,,
\end{align}
where the initial configuration $\wpi^B_0$ is a point measure on $ \mathcal I\times (V\cap B)\times \{a,g\}$, possibly random under $P^\omega$. Notice that we do not impose any restriction on the initial configuration at this stage. 

Let us justify that there exists a unique solution to this stochastic differential equation.
Since we deal with a countable discrete state space, the existence of such a solution can be shown just by constructing a stochastic process with a classical inductive scheme, where the successive jumps are given by the Poisson point measures.
This is a strong Markov process which is well defined until the potential accumulation point of the jumps (if explosion occurs) and it is the solution of \eqref{skype} until the time of explosion by construction. 
We just need to prove that explosion does not occur almost surely.
For that purpose, let us introduce the projection of the process on the last two coordinates 
$$
Z_t^B(K,u):=\wpi_t^B(\mathcal I, V\cap K,u)\qquad K\subseteq \R^n,\,u\subseteq\{a,g\}\,.
$$
$Z_t^B(K,u)$ counts the alive or ghost particles in $K$ at time $t$. 
Under $P^\omega$, $(Z_t^B)_{t\in[0,T]}$ is a multi-type branching process with a  finite number of types (that is, $(V\cap B)\times \{a,g\}$) and bounded reproduction mean:
at rate $r(x,y)$ each particle of type $(x,a)$ is replaced by two particles of types $(x,g)$ and $(y,a)$. At rate $b$ each particle of type $(x,a)$ creates a new particle of type $(x,a)$. The particles of type $(\cdot,g)$ do not evolve. 
Using classical  first moment estimates of the branching process $(Z_t^B)_{t\in[0,T]}$
we obtain non-explosivity of the process, so $(\wpi^B_t)_{t\in[0,T]}$ is well defined for any positive time $T>0$, see e.g. \cite{BM15}. Actually, we will need more quantitative estimates on the first moment of $Z_t^B$ for the limiting procedure in the next section, in particular the dependance on the transition rates. These estimates are given in the next lemma, using the harmonic function of the branching process, which is here constant in space.

\begin{lem}\label{capogiro}
	Let $b>0$.
	Consider two compact sets $K,B\subset \R^n$ with $K\subseteq B$. Take a (possibly random) initial configuration $Z_0^B$ such that $Z_0^B(B,\{g\})=0$ and, for some $M>0$, $E^\omega[Z_0^B(x,\{a\})]\leq M$ for all $x\in B\cap V$. Then it holds, for all $T>0$, 
	\begin{align}\label{ballaballaballa}
	E^\omega[Z_{T}^B(K,\{a,g\})]
		\leq C_K M\e^{bT},
	\end{align}
where $C_K= \sum_{x\in K\cap V}(b^{-1}r(x)+1)$. In particular $C_K$ does not depend on $B$.  
\end{lem}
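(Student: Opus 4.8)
The plan is to exploit that $(Z_t^B)_{t\in[0,T]}$ is a multitype branching process with finitely many types, so that its first moments solve a closed linear system read off directly from the transitions. Write $a_t(x):=E^\omega[Z_t^B(\{x\},\{a\})]$ and $g_t(x):=E^\omega[Z_t^B(\{x\},\{g\})]$ for the expected numbers of alive and ghost particles at $x\in V\cap B$ at time $t$. Births contribute $b\,a_t(x)$, jumps move alive mass according to the symmetric rates $r(x,y)$ and deposit ghost mass, and ghosts are frozen, so that
\begin{align*}
\dot a_t(x) = b\,a_t(x) + \sum_{y\in V\cap B} r(x,y)\big(a_t(y)-a_t(x)\big),
\qquad
\dot g_t(x)=\Big(\sum_{y\in V\cap B}r(x,y)\Big)a_t(x),
\end{align*}
with $a_0(x)\le M$ and $g_0\equiv 0$. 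On the finite graph these moments are finite and the system is well posed, so the whole difficulty lies in the size of the constant and not in existence.

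The first equation reads $a_t=\e^{bt}P_t a_0$, where $P_t=\e^{t\mathcal Q}$ is the semigroup of the random walk on $V\cap B$ with generator $\mathcal Q f(x)=\sum_y r(x,y)(f(y)-f(x))$. This is precisely the assertion that $\e^{-bt}$ times the alive count is a martingale: the harmonic function of the branching process is $h\equiv \mathbf 1$ on the alive types (eigenvalue $b$), constant in space as announced, and the associated spine is a single walker moving with the symmetric rates $r(\cdot,\cdot)$. The feature I will lean on is that, since $r(x,y)=r(y,x)$, the kernel $p_t$ is symmetric and conservative on the finite graph, whence $\sum_{x_0} p_t(x,x_0)=1$ for every $x,t$.

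For the alive part I would sum $a_T(x)=\e^{bT}\sum_{x_0}p_T(x,x_0)a_0(x_0)$ over $x\in K\cap V$ and bound $a_0(x_0)\le M$, so that $\sum_{x\in K}a_T(x)\le M\e^{bT}\sum_{x\in K}\sum_{x_0}p_T(x,x_0)=M\e^{bT}\,|K\cap V|$, the cancellation $\sum_{x_0}p_T(x,x_0)=1$ being what erases any dependence on $B$. For the ghost part, integrating the second equation gives $g_T(x)=\big(\sum_y r(x,y)\big)\int_0^T a_s(x)\,ds$; inserting the same representation, using $\sum_{x_0}p_s(x,x_0)=1$ and $\sum_{y\in V\cap B} r(x,y)\le r(x)$ yields $\sum_{x\in K}g_T(x)\le M\big(\sum_{x\in K}r(x)\big)\int_0^T\e^{bs}\,ds\le \frac{M\e^{bT}}{b}\sum_{x\in K}r(x)$. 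Adding the two bounds produces exactly $M\e^{bT}\sum_{x\in K\cap V}(b^{-1}r(x)+1)=C_K M\e^{bT}$.

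The main obstacle — and the sole place where symmetry is essential — is guaranteeing that the constant depends on $K$ and $M$ alone and not on the potentially enormous total initial mass on $B$, since a priori both $a_T$ and $g_T$ at a site of $K$ could be fed by mass diffusing in from all of $B$. Reversibility resolves this, because the total transition probability into a fixed site from a uniformly bounded profile is governed by $\sum_{x_0}p_t(x,x_0)=1$ rather than by the number of sources. Two harmless checks remain: the direction of the adjoint in the forward equation (irrelevant here as $\mathcal Q$ is self-adjoint for the counting measure), and the fact that restricting the rates to $B$ only decreases the exit rate, so replacing $\sum_{y\in V\cap B}r(x,y)$ by the full $r(x)$ keeps the estimate valid and $B$-independent.
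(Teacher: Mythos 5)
Your proof is correct and follows essentially the same route as the paper: both arguments rest on the first-moment linear system of the two-type (alive/ghost) branching process, the fact that the constant function is a (left) eigenvector with eigenvalue $b$ for the alive part thanks to the symmetry $r(x,y)=r(y,x)$, and the bound $r^B(x)\leq r(x)$ after integrating $\e^{bs}$ to control the ghosts. The only difference is presentational — you solve the componentwise moment ODEs via the semigroup $\e^{bt}P_t$ and its double stochasticity, while the paper writes the same computation as an explicit block matrix exponential $\e^{tA}$ acted on by the row vector $z_0=M\bar 1$.
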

\begin{rem}\label{octo}
In the lemma we have assumed $b>0$ to make sense of the constant $C_K$. The same proof in the case $b=0$ yields a bound in \eqref{ballaballaballa} of the form $M(\sum_{x\in K\cap V} r(x)+1)(T\vee 1)$. Notice though that the quantity $E^\omega[Z_{T}^B(K,\{a,g\})]$ can only increase as $b$ grows, so that the bound appearing in \eqref{ballaballaballa} holds for any $b>0$ when the birth rate is equal to $0$.
\end{rem}
\begin{proof} 
	We define the matrix $M_t^B$ with entries indexed by the types 
	\begin{align*}
	M^B_t\big((x,u) ,\,(y,v)\big)
		=E^\omega_{\delta_{(x,\{u\})}}\big[Z^B_t(y,\{v\})\big]\qquad x,y\in B\cap V,\,u,v\in\{a,g\}\,,
	\end{align*}
	so that $M^B_t\big((x,u) ,\,(y,v)\big)$ indicates the mean number of particles of type $(y,v)$ present at time $t$ if we started with a unique particle of type $(x,u)$ at time $0$.
	This matrix is the first moment semigroup associated to a branching process
and thus coincides with $\exp(tA)$, where  $A=A(B)$ 	is a finite matrix given by
\begin{align*}
	A=
	\begin{pmatrix}
	A_1 & A_2 \\
	0         & 0
	\end{pmatrix}
	\end{align*}
	with the following blocks: $A_1$ is the submatrix accounting for the evolution of $a$--particles, that is, for $x\neq y\in {B\cap}V$, we have $A_{1}(x,x)=b-r^B(x)$ with $r^B(x):=\sum_{z\in B\cap V}r(x,z)$ and $A_{1}(x,y)=r(x,y)$. $A_{2}$ accounts for the generation of $g$--particles from $a$--particles, that is, $A_{2}(x,y)=r(x,y)$. The two lower blocks have all the entries equal to $0$, since $g$--particles neither move nor generate other $g$--particles.
	To see that, one may use Kolmogorov forward equation or  apply the differential equation \eqref{skype}  to $(\mathcal I,y,v)$ with $\wpi_0^B=\delta_{\{1,x,\{u\}\}}$  and take expectation on both sides to get
	$M^B_t\big((x,u) ,\,(y,v)\big)=
		  \delta_{(x,u)=(y,v)}+\int_0^t M_sA\big((x,u),\,(y,v)\big)\,{\rm d}s.$

We compute now $\exp(At)$:
\begin{align*}
	A^k=
	\begin{pmatrix}
	A_1^k & A_1^{k-1}A_2 \\
	0         & 0
	\end{pmatrix}
	\quad\Longrightarrow\quad
	\e^{At}=
	\begin{pmatrix}
	\e^{A_{1}t} &  \sum_{k\geq 1}\frac{A_{1}^{k-1}A_{2}}{k!}t^k \\
	0         & Id
	\end{pmatrix}
\end{align*} 
where $Id$ is the identity matrix. Call $z_{0}=(M,M,\dots,M\,;\,0,0,\dots,0)$ the configuration having  $M$ alive particles on each site of $B\cap V$ and no ghost particles.
	Then, for any initial configuration $Z_0^B$ with less than $M$ alive particles per site in $B\cap V$ in average and no ghosts as in the hypothesis of the lemma, it holds
	\begin{align*}
	E^\omega\big[Z_{T}^B(K,\{a,g\})\big]
		=E^\omega\big[Z_{0}^B\e^{AT}(K,\{a,g\})\big]
		=E^\omega[Z_{0}^B]\e^{AT}(K,\{a,g\})
		\leq z_0\e^{AT}(K,\{a,g\})\,.
	\end{align*}
		Since $\bar 1=(1,1,\dots,1)$ is an eigenvector for the matrix $A_{1}$ with eigenvalue $b$ (that is, $\bar 1A_{1}=(b,b,\dots,b)$) and since 
	$$
	\bar 1\sum_{k\geq 1}\frac{A_{1}^{k-1}A_{2}}{k!}t^k
		=\sum_{k\geq 1}\frac{b^{k-1}(1,\dots, 1)A_{2}}{k!}t^k
		=\sum_{k\geq 1}\frac{b^{k-1}}{k!}t^k(r^B(x))_{x\in B}
		=b^{-1}(\e^{bt}-1)(r^B(x))_{x\in B}\,,
	$$
	it holds
	\begin{align*}
	z_0\e^{At}=M(\e^{bt},\dots,\e^{bt}\,;\,b^{-1}(\e^{bt}-1)r(\cdot),\dots,b^{-1}(\e^{bt}-1)r(\cdot))
	\end{align*}
	and we can conclude that
	\begin{align*}
	E^\omega[Z_{T}^B(K,\{a,g\})]
		\leq M\e^{bT}\#\{K\cap V\} + M b^{-1}\e^{bT}\sum_{x\in K\cap V}r(x)\,,
	\end{align*}
	which  implies \eqref{ballaballaballa}.
\end{proof}

\subsection{Existence of the process on the infinite graph with a finite number of initial particles}\label{finiteparticles}

In this section we want to show  that the process $(\wpi_t)_{t\geq 0}$ described in \eqref{colosseo} is well defined when we start with a configuration with a finite number of particles. We will show that  $(\wpi_t)_{t\geq 0}$ can be in fact obtained as the limit  
of the process $(\wpi_t^{B_N})_{t\geq 0}$ introduced in Section \ref{finitegraph}, where $B_N$ is the $n$-dimensional box $[-N,N]^n$.

\smallskip

Consider the process $(\wpi_t^{B_N})_{t\in[0,T]}$  introduced in Section \ref{finitegraph} up to time $T>0$. We want to show now that this process ``stabilizes'' as $N$ tends to infinity. That is, suppose to use the same source of randomness (i.e.~the same realization of the Poisson processes $\mathcal N_i^{x,y},\mathcal N_i^{b}$) to construct the process $(\wpi_t^{B_N})_{t\in[0,T]}$ for all different $N$'s. Then,   with $P^{\omega}$--probability $1$, there exists $N_0\in\N$ such that, for all $N\geq N_0$,
\begin{align}\label{siegeauto}
(\wpi_t^{B_N})_{t\in [0,T]}\equiv(\wpi_t^{B_{N_0}})_{t\in [0,T]}\,.
\end{align}
To this end, we first of all prove that the progeny of a finite number of particles remains into a finite region up to time $T>0$ with probability $1$ as $N\to\infty$.

\smallskip

For an initial configuration of alive and ghost particles $z_0$, define the maximal displacement at time $T$ as
\begin{align*}
R_N(z_0,T):=\sup_{t\in[0,T]}\sup_{i\in \mathcal A_t}\|X_t^{N,i}\|\,,
\end{align*}
where $X_t^{N,i}$ is the position of particle $i$ at time $t$ in the process $(\wpi_t^{B_N})_{t\in [0,T]}$.

	\begin{prop}\label{seggiolino}
		Consider
		an initial configuration $z_0$ with a finite number of alive particle. Then, 
		$P^\omega$--almost surely, there exists $Q>0$ such that $R_N(z_0,T)\leq Q$ for all  $N\in\N$.
\end{prop}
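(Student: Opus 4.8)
The plan is to show that the maximal displacement $R_N(z_0,T)$ is bounded uniformly in $N$ by controlling the total number of particles that visit large spatial annuli. Since $z_0$ has only finitely many alive particles, by linearity and independence it suffices to treat the case of a single initial alive particle at some site $x_0 \in V$; the general case follows by taking the maximum over the finitely many progenies. The key point to exploit is that in our model a particle can only move to sites that are actually present in $V$, and the jump rate to a far-away site decays like $\e^{-\|x-y\|}$. Because $V$ is a Poisson point process, the expected number of points in a ball of radius $R$ grows only polynomially, so the cumulative rate at which the (finite, at any fixed time) population attempts long jumps is summable once we sum over shells at increasing distance.

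First I would fix a reference radius and define, for $k \in \N$, the annulus $S_k = B_{k+1}\setminus B_k$ (Euclidean balls). The strategy is to prove that, $P^\omega$--almost surely, only finitely many of the annuli $S_k$ are ever reached by the progeny, with a bound that does not depend on $N$. To make this quantitative I would use Lemma \ref{capogiro}: with $K = S_k \cap V$ and $B = B_N$, the expected number of alive-or-ghost particles in $S_k$ up to time $T$ is at most $C_{S_k} M \e^{bT}$, where $C_{S_k} = \sum_{x\in S_k\cap V}\big(b^{-1}r(x)+1\big)$. The crucial feature, emphasized in the lemma, is that $C_{S_k}$ does not depend on $B$, hence not on $N$. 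Therefore
\begin{align*}
\P\text{-a.s.}\quad \sup_{N}\, E^\omega\big[Z_T^{B_N}(S_k,\{a,g\})\big]\;\leq\; C_{S_k}\,M\,\e^{bT}\,.
\end{align*}

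The heart of the argument is then a Borel--Cantelli estimate on $C_{S_k}$. I would show that $\sum_{k} \E[C_{S_k}]$, or more robustly $\sum_k C_{S_k}$, is $\P$--almost surely finite, or at least that the probability of any alive particle reaching $S_k$ is summable in $k$. The quantity $r(x) = \sum_{y\in V}\e^{-\|x-y\|}$ is $\P$--almost surely finite and, by stationarity of the Poisson point process, has bounded expectation; meanwhile $\#\{S_k \cap V\}$ has expectation of order $\gamma\, k^{n-1}$. The decisive gain comes from the fact that for a particle to be \emph{in} $S_k$ at all, its genealogical line must have executed a chain of jumps whose lengths sum to at least of order $k$, and each such jump carries an exponentially small rate; summing the expected occupation $C_{S_k}M\e^{bT}$ against the probability of realizing the requisite long displacement yields a series that is summable in $k$. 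By Borel--Cantelli, almost surely there is a (random) $Q$ such that no alive particle ever enters $S_k$ for $k > Q$, uniformly over all $N$, which is exactly the claim $R_N(z_0,T)\leq Q$ for all $N$.

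The main obstacle I anticipate is making the last step genuinely uniform in $N$ and correctly accounting for the branching: the population size can grow like $\e^{bT}$, so one must verify that the exponential growth in the number of particles is dominated by the exponential decay in jump rate over distance. Concretely, the delicate estimate is bounding the probability that \emph{some} particle in an $\e^{bT}$-sized progeny achieves displacement of order $k$; a naive union bound over particles could be too lossy. The clean way around this is precisely the first-moment bound of Lemma \ref{capogiro}, which already packages the branching growth into the factor $M\e^{bT}$ and leaves only the geometric sum $\sum_k C_{S_k}$ (times the long-jump probability) to be controlled by the Poisson geometry. Establishing that this geometric series converges $\P$--almost surely, uniformly in $N$, is where the care is needed.
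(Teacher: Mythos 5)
There is a genuine gap, and it sits exactly where you flag ``care is needed.'' Your plan hinges on applying Lemma \ref{capogiro} to the annuli $S_k=B_{k+1}\setminus B_k$ and then summing over $k$, but that lemma cannot deliver any decay in $k$: its bound is $C_{S_k}M\e^{bT}$ with $C_{S_k}=\sum_{x\in S_k\cap V}(b^{-1}r(x)+1)$, and since the annulus $S_k$ contains on the order of $\gamma k^{n-1}$ Poisson points, $C_{S_k}$ \emph{grows} polynomially in $k$. The reason is structural: the proof of Lemma \ref{capogiro} dominates the initial condition by the configuration $z_0$ having $M$ particles on \emph{every} site of $B\cap V$, so its conclusion is completely insensitive to where your single initial particle actually sits; it carries no information about how far mass can travel in time $T$. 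Consequently $\sum_k C_{S_k}=\infty$, Markov plus Borel--Cantelli on the annuli gives nothing, and the ``decisive gain'' you invoke --- the probability that a genealogical line realizes a displacement of order $k$ --- is not supplied by anything in your argument. You cannot simply multiply the expectation bound by such a probability either: the two are not independent, and no conditioning or decoupling argument is given that would justify the product.

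That displacement probability is in fact the entire content of the proposition, and proving it is where the paper does its work. The difficulty you would face is twofold. First, distance $k$ need not be covered by one exponentially unlikely long jump; it can be covered by many $O(1)$ jumps, and the per-site jump rate $r(x)$ is unbounded over the graph --- it is only controlled as $\max_{x\in B_N\cap V} r(x)\leq C\log N$ (Lemma \ref{tyran}), a bound that degrades with the region. This creates a circularity: to bound the number of jumps a particle makes you need a rate bound along its trajectory, which requires knowing the trajectory stays in a bounded region, which is what you are trying to prove. The paper resolves this by decomposing over the number $\ell$ of descendants (with the Yule-process tail bound $P^\omega(E_{2\ell+1}^+)\leq C\ell^{-4}$ obtained from sums of exponential variables), taking a union bound in which each of the $\ell$ particles is only required to displace by $M/\ell$, and then, \emph{conditionally on the previous particles having stayed inside $B_M$} (the events $A_{k,\ell}$, which make Lemma \ref{tyran} applicable with rate $C\log M$), splitting each particle's displacement into ``one jump longer than $\sqrt{M/\ell}$'' (rate at most $\e^{-\sqrt{M/\ell}}$ times the number of points in $B_{M/\ell}(x_k)$) versus ``more than $\sqrt{M/\ell}$ jumps before $T$'' (Poisson tail at rate $C\log M$). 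Summing the resulting bound $c_1 M\e^{-c_2M^{1/4}}+CM^{-3/2}$ over $M$ and applying Borel--Cantelli gives the uniform-in-$N$ radius $Q$. None of these steps --- the descendant decomposition, the per-particle threshold $M/\ell$, the long-jump/many-jumps dichotomy, or the conditioning that breaks the circularity --- appears in your proposal, so the argument as written does not close.
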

\begin{proof}
	We first consider $z_0$ to be constituted of a unique alive particle, labelled with $1$. Without loss of generality we can imagine particle $1$ to start at the origin.
	Abbreviate $R=R_N(z_0,T)$ and consider $(\wpi_t^{B_N})_{t\in[0,T]}$ for any $N$. For $M>0$, we can bound
	\begin{align}\label{bruco}
	P^\omega(R>M)
	=\sum_{\ell=0}^\infty P^\omega(R>M\,|\,E_{2\ell+1})P^\omega(E_{2\ell+1})
	\end{align}  
	with 
	$$
	E_\ell=E_\ell(T):=\{\mbox{particle 1 had $\ell-1$ descendants up to time $T$}\}\,.
	$$ 
	By descendant of particle $1$ we mean a particle with label starting by $1$ and that was generated via a birth event (so we do not count the particles whose label start by $1$ that were generated with a change of label due to a jump event).
	We are considering only odd integers $2\ell+1$ since each time a particle disappears it generates two new particles.
	For $\ell\in\N$, the quantity $P^\omega(E_\ell)$ is clearly dominated by $P^\omega(E_\ell^+)$, with
	$$
	E_\ell^+:=\{\mbox{particle 1 had at least $\ell-1$ descendants up to time $T$}\}\,.
	$$
	The number $Z_t^{B_N}(\R^n,\{a\})$ of alive particles at time $t$ follows a $\N$-valued Markov process starting in $1$ and that goes from $k$ to $k+1$ with rate $kb$. Let $(e_k)_{k\in\N}$ be independent exponential random variables under $P^\omega$ with  $E^\omega[e_k]=(bk)^{-1}$ and let $S_\ell:=\sum_{k=1}^\ell e_k$. We bound, for all $\theta>0$,
	\begin{align}\label{colazione}
	P^\omega\big(S_\ell\leq T\big)
	\leq \e^{\theta T}E^\omega[\e^{-\theta S_\ell}]
	= \e^{\theta T}\prod_{k=1}^\ell\Big(1-\frac \theta{kb+\theta}\Big)
	\leq  \e^{\theta T-\sum_{k=1}^\ell\frac \theta{kb+\theta}},
	\end{align} 
	where for the first inequality we have exploited the exponential Markov inequality, while for the second passage we have used the independence of the $e_k$'s and the formula for the moment generating function of the exponential distribution.

Notice that if the total number of descendants of particle $1$ at time $T$ is $2\ell$, then $Z_t^{B_N}(\R^n,\{a\})=\ell+1$. A (non-optimized) choice of $\theta=4b$ in \eqref{colazione} yields therefore
\begin{align}\label{babu}
	P^\omega(E_{2\ell+1}^+)
		=P^\omega(Z_t^{B_N}(\R^n,\{a\})\geq n+1)
		=P^\omega\Big(S_{\ell+1}\leq T\Big)
		\leq \e^{4b T-\sum_{k=1}^{\ell+1}\frac 4{k+4}}
		\leq C \ell^{-4}
\end{align}
for $\ell$ sufficiently large and some universal constant $C>0$. 

\smallskip

We move to the analysis of the term $P(R>M\,|\,E_\ell)$ in \eqref{bruco}. Abandoning for a moment the Ulam--Harris--Neveu notation, let us look at the descendants of particle $1$ and just label them $2,3,4,...$ in chronological order of birth (particles $2j$ and $2j+1$ are born in the same instant, for all $j$). 
Let $x_k\in V$ and $t_k\in[0,T]$ be the site and the time where the $k$-th particle was born and let $(X_t^k)$ be its trajectory while alive. Let $R_k$ be the maximal displacement of particle $k$, that is, $R_k:=\sup_{t\in[t_k,T]}\|X_t^k-x_k\|$.  
	We observe that
	\begin{align}\label{libellula}
	P^\omega(R>M\,|\,E_\ell)
		&=P^\omega\big(\exists k\in\{1,\dots,\ell\}:\, R_k>M/\ell \mbox{ and }R_j<M/\ell \mbox{ for all $j<k$}\,|\,E_\ell\big)\nonumber\\
		&\leq \sum_{k=1}^\ell P^\omega\big(R_k>M/\ell\,\big|\, A_{k,\ell}\big),
	\end{align}
	with $A_{k,\ell}:=\{R_j<M/\ell \mbox{ for all $j<k$}\}\cap E_\ell$.
	We have therefore to study the probability that $(X_t^k)$ left the ball $B_{M/\ell}(x_k)$ before time $T$ knowing that the first $k-1$ particles had a displacement smaller than $M/\ell$. We point out that, under $A_{k,\ell}$, we have that $B_{M/\ell}(x_k)$ is completely contained in $B_M$: it follows that all $x\in B_{M/\ell}(x_k)$ have $r(x)<C\log M$ by Lemma \ref{tyran} (see here below) and that $B_{M/\ell}(x_k)$ contains at most $C M^n\log M /\ell^n$ points of $V$ (this follows from item (ii) in the proof of Lemma \ref{tyran}).
	
Let 
	\begin{align*}
	\tau_k:=\inf \big\{t\in[t_k,T]:\, X_t^k\not\in B_{M/\ell}(x_k)\big\}\,.
	\end{align*}
	We decompose the event $\{\tau_k\leq T\}=\{\tau_k'<\tau_k\leq T\}\cup\{\tau_k\leq T,\tau_k'\}$ with 
	$$
	\tau_k':=\inf\big\{t\in[t_k,T]:\,\|X_t^k-X_{t-}^k\|\geq \sqrt{M/\ell}\big\}
	$$
	the first time that particle $k$ makes a jump longer than $\sqrt{M/\ell}$. 
	Under $A_{k,\ell}$, the event that $(X_t^k)$ makes a jump of length larger than $\sqrt{M/\ell}$ inside $B_{M/\ell}(x_k)$ has rate smaller than $\e^{-\sqrt{M/\ell}}$ times the number of the points in $B_{M/\ell}(x_k)$. It follows that
	\begin{align}\label{coccinella}
	P^\omega(\tau_k'<\tau_k\leq T\,|\,A_{k,\ell})
	\leq P^\omega(\xi\leq T)
	\leq cT\e^{-\sqrt{M/\ell}} M^n\log M /\ell^n 
	\end{align}
	for some universal constant $c>0$, where $\xi$ is an exponential random variable with parameter $C\e^{-\sqrt{M/\ell}} M^n\log M /\ell^n$.
	On the other hand, the event $\{\tau_k\leq T,\tau_k'\}$ implies that $X^k$ has performed more than $\sqrt{M/\ell}$ jumps before time $T$. Remember that under $A_{k,\ell}$ each jump has rate smaller than $C\log M$ by Lemma \ref{tyran}. Hence, if $Y$ is a Poisson random variable of parameter $CT\log M$,
	\begin{align}\label{ragno}
	P^\omega(\tau_k\leq T,\tau_k'\,|\,A_{k,\ell})
		\leq  	P^\omega(Y>\sqrt {M/\ell})
		\leq \e^{-CT\log M} \big((M/\ell)^{-1/2}\e CT \log M\big)^{\sqrt{M/\ell}}
	\end{align}
	where the last bound holds for $M/\ell$ sufficiently large, for example when $\ell\leq\sqrt M$, see e.g.~\cite[Exercise 2.3.3]{V18}.
	Continuing from \eqref{libellula}, bounds \eqref{coccinella} and \eqref{ragno} together yield, for $M$ sufficiently large and $\ell\leq\sqrt M$,
	\begin{align}\label{lombrico}
	P^\omega(R>M\,|\,E_\ell)
	&\leq \sum_{k=1}^\ell P^\omega(\tau_k\leq T\,\big|\, A_{k,\ell})
	\leq c_1 \ell \e^{-c_2\sqrt{M/\ell}}\,.
	\end{align}

	Going back to \eqref{bruco} and using \eqref{babu} and \eqref{lombrico}, we finally have
	\begin{align*}
	P^\omega(R>M)
	&\leq \sum_{\ell=0}^{\sqrt M}P^\omega(R>M\,|\,E_{2\ell+1})
	+\sum_{\ell=\sqrt M}^\infty P^\omega(E_{2\ell+1}^+)\\
	&\leq c_1 M\e^{-c_2 M^{1/4}}+ C M^{-3/2}.
	\end{align*} 
	This quantity is summable in $M$, which implies the claim by the Borel-Cantelli lemma for a single initial particle. 
	The argument can be easily generalized to any finite number of initial particles.
\end{proof}

\begin{lem}\label{tyran}
	There exists $C>0$ such that, for $\P$-a.a.~$\omega$, the following holds: there exists $\bar N=\bar N(\omega)$ such that $\forall N\geq \bar N$ one has
	$$
	\max_{x\in B_N\cap V}r(x)<C\log N\,.
	$$
\end{lem}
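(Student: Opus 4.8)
The plan is to reduce the statement to an exponential tail bound for the rate at a single \emph{deterministic} point, and then to convert this into a uniform bound over the random set $V\cap B_N$ via a union bound over a fixed grid, followed by the Borel--Cantelli lemma. First I would cover $B_N$ by the $O(N^n)$ cubes $Q$ of the integer lattice $\mathbb Z^n$ that intersect $B_N$, and let $q_Q$ denote the centre of $Q$. For a deterministic point $q$ set $\tilde r(q):=\sum_{y\in V}\e^{-\|q-y\|}$. Since every $x\in Q$ satisfies $\|x-y\|\geq\|q_Q-y\|-\tfrac{\sqrt n}{2}$, we have $\e^{-\|x-y\|}\leq \e^{c_n}\e^{-\|q_Q-y\|}$ with $c_n:=\sqrt n/2$, and therefore
\[
\sup_{x\in V\cap Q} r(x)\ \leq\ \sup_{x\in Q}\sum_{y\in V}\e^{-\|x-y\|}\ \leq\ \e^{c_n}\,\tilde r(q_Q).
\]
This is the crucial reduction: it replaces a supremum over random point locations by the rate evaluated at the fixed centres $q_Q$, which are amenable to an unconditional large-deviation estimate.

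For the tail of $\tilde r(q)$ at a deterministic $q$, I would decompose space into the annuli $A_k:=\{y:\,k\leq\|q-y\|<k+1\}$, $k\geq 0$. Writing $M_k:=\#(V\cap A_k)$, the variables $M_k$ are independent Poisson (disjoint regions of a Poisson process) with means $\mu_k=\gamma\,|A_k|=O(k^{n-1})$, and since each point of $A_k$ contributes at most $\e^{-k}$ we have $\tilde r(q)\leq\sum_{k\geq 0}\e^{-k}M_k$. A Chernoff bound with parameter $\theta=1$ then gives
\[
\E\big[\e^{\tilde r(q)}\big]\ \leq\ \prod_{k\geq 0}\E\big[\e^{\e^{-k}M_k}\big]\ =\ \exp\Big(\sum_{k\geq 0}\mu_k\big(\e^{\e^{-k}}-1\big)\Big)\ =:\ \e^{C_0},
\]
where $C_0<\infty$ since $\mu_k=O(k^{n-1})$ while $\e^{\e^{-k}}-1=O(\e^{-k})$, so the series converges. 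By Markov's inequality, $\P(\tilde r(q)>t)\leq \e^{C_0-t}$ for every $t>0$ and every deterministic $q$.

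Combining the two steps, for $t=C\log N$ and $\kappa:=\e^{-c_n}$ I would estimate
\[
\P\Big(\max_{x\in V\cap B_N}r(x)>C\log N\Big)\ \leq\ \sum_{Q}\P\big(\tilde r(q_Q)>\kappa\,C\log N\big)\ \leq\ c\,N^n\,\e^{C_0}N^{-\kappa C}\ =\ c'\,N^{\,n-\kappa C}.
\]
Choosing $C$ large enough that $\kappa C>n+2$ makes the right-hand side summable in $N$, so by Borel--Cantelli the event $\{\max_{x\in V\cap B_N}r(x)>C\log N\}$ occurs for only finitely many $N$, $\P$--almost surely; equivalently there exists $\bar N(\omega)$ with $\max_{x\in V\cap B_N}r(x)<C\log N$ for all $N\geq\bar N$.

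I expect the only genuinely delicate point to be the passage from deterministic points to the random set $V\cap B_N$: a naive bound on $r(x)$ for $x\in V$ would require conditioning on a point sitting at $x$ (the Palm measure), whereas the grid reduction above sidesteps this entirely. Alternatively, one could argue directly through the Mecke--Slivnyak formula, noting that under the Palm measure at $x$ the rate $r(x)$ has the same law as $\tilde r(x)$ for deterministic $x$; this yields the same polynomial bound on $\E[\#\{x\in V\cap B_N:\,r(x)>C\log N\}]$ and then the conclusion by Markov and Borel--Cantelli. Finally, the same annulus-decomposition and Chernoff estimate, applied to the counts $M_k$ themselves, produces the companion bound on the number of points of $V$ in balls (the ``item (ii)'' invoked elsewhere in the paper).
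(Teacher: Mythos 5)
Your proof is correct, and it takes a route that differs in structure from the paper's. The paper first establishes two almost-sure geometric facts via Poisson concentration and Borel--Cantelli --- (i) the number of points in the $k$-th annulus around the origin is $O(k^{n-1})$, and (ii) the number of points in every unit box inside $B_N$ is at most $c_2\log N$ --- and then bounds $r(x)$ \emph{deterministically} given these counts, splitting the sum into contributions from $B_{2N}$ (where the $\log N$ factor comes from the per-box count bound, multiplied by the convergent series $\sum_k k^{n-1}\e^{-k}$) and from $B_{2N}^c$ (bounded by a constant via the annulus counts). You instead compute a single exponential moment of the full rate functional $\tilde r(q)=\sum_{y\in V}\e^{-\|q-y\|}$ at deterministic grid centres, getting the clean tail bound $\P(\tilde r(q)>t)\leq \e^{C_0-t}$, and the $\log N$ then emerges from the union bound over the $O(N^n)$ grid cubes rather than from a point-count estimate; your triangle-inequality transfer $\sup_{x\in Q}r(x)\leq \e^{\sqrt n/2}\,\tilde r(q_Q)$ is the step that replaces the paper's uniform-in-$x$ deterministic computation, and both devices equally sidestep any Palm-measure issue. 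Your version is more compact, since one Chernoff bound handles near and far contributions at once and yields an explicit exponential tail; the paper's packaging has the advantage that its intermediate item (ii) is an estimate reused verbatim later (in the proof of Proposition \ref{seggiolino}), though, as you note, your annulus-plus-Chernoff argument applied to the counts $M_k$ themselves recovers that statement as well.
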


\begin{proof}
	We will use the two following trivial facts about Poisson point processes. Recall that $B_N=[-N,N]^n$. Let $(\mathcal B_i)_{i=1,\dots,(2N)^{n}}$ be a collection of disjoint (up to their border) volume-$1$ cubes covering $B_N$ and let $\mathcal C_j:=\{x\in B_{j+1}\setminus B_j\}$, for $j\in\N$, be the $j$-th square-crown around the origin. Then there exist constants $c_1,c_2>0$ only depending on the dimension $n$ such that, for $\P$-a.a.~$\omega$,
	\begin{itemize}
		\item[(i)] there exists $ N_1= N_1(\omega)$ such that for all $N\geq N_1$ 
		$$
		\#\{x\in \mathcal C_N\cap V\}\leq c_1 N^{n-1}\,;
		$$
		\item[(ii)] there exists $\bar N_2=\bar N_2(\omega)$ such that for all $N\geq \bar N_2$ and for all $i=1,\dots, (2N)^{n}$  
		$$
		\#\{x\in \mathcal B_i\cap V\}\leq c_2\log N\,.
		$$
	\end{itemize}
	Both facts can be checked by using classic concentration inequalities for Poisson random variables around their mean and then the Borel-Cantelli lemma.

	\smallskip
	
		Take $N\geq \max\{ N_1 , N_2\}$ and write, for $x\in B_N$,
	\begin{align}\label{brie}
	r(x)
	&=\sum_{y\in B_{2N}\cap V}\e^{-\|x-y\|}+\sum_{y\in B_{2N}^c\cap V}\e^{-\|x-y\|}\,.
	\end{align}
	For the first sum we divide $B_{2N}$ into $\mathcal B_i$'s as for item (ii) above, with $i=1,\dots, (2N)^n$. Notice that, for all $k\in\N$, there are less than $c_3 k^{n -1}$ such boxes at distance $k$ from $x$, for some $c_3>0$ that only depends on the dimension $n$. Furthermore, in each of these boxes there are at most $c_2\log(2N)$ vertices by (ii). Hence it holds
	\begin{align}\label{camembert}
	\sum_{y\in B_{2N}\cap V}\e^{-\|x-y\|}
	\leq 	\sum_{k=0}^{2N}c_3\e^{-k}k^{n-1} \cdot c_2\log (2N)
	\leq c_5\log N\,.
	\end{align}
	For the second sum in \eqref{brie} we use item (i) and bound
	\begin{align}\label{savarin}
	\sum_{y\in B_{2N}^c\cap V}\e^{-\|x-y\|}
	= \sum_{k=N}^\infty \sum_{y\in\mathcal C_k\cap V}\e^{-\|x-y\|}
	\leq \sum_{k=N+1}^\infty c_1 k^{n-1} \e^{-(k-N)}
	\leq c_6
	\end{align}
	for some $c_6>0$. Putting \eqref{camembert} and \eqref{savarin} into \eqref{brie} gives the result.
\end{proof}

\begin{cor}[Corollary of Proposition \ref{seggiolino}]\label{giardinetti}
	Consider a compact set $Q\subset \R^n$. Let $Z_0\in \N^V\times \N^V$ be an  initial configuration  such that $Z_0(x,\{a,g\})=0$ for all $x\not\in Q\cap V$ and $E^\omega[Z_0(x,a)]\leq M$ for all $x\in Q\cap V$.
	Then, $P^\omega$--a.s., for every $I\in\mathcal I$, $K\subset \R^n$ compact, $U\subseteq \{a,g\}$ and $t>0$, the following limit exists:
	$$
	\wpi_t(I,K,U):=\lim_{N\to\infty} \wpi_t^{B_N}(I,K,U)\,.
	$$	
	Furthermore, the following holds:
	\begin{itemize}
		\item[(i)]  The measure $(\wpi_t)_{t\geq 0}$  verifies equation \eqref{colosseo}, where the two sides are finite measures and coincide on $\R^n$.
		\item[(ii)] Defining for all compact sets $K\subset \R^n,$ for all $u\subseteq\{a,g\}$ and for all $t\geq0$		
		$$
		Z_t(K,u)
		:=\wpi_t(\mathcal I, K,u)\,,
		$$
		one has, for all $T>0$,
		\begin{align}\label{sangiovanni}
			E^\omega[Z_{T}(K,\{a,g\})]
		\leq C_KM\e^{bT}
		\end{align}
		where $C_K= \sum_{x\in K}(b^{-1}r(x)+1)$ (see Remark \ref{octo} for the case $b=0$). 
	\end{itemize}
\end{cor}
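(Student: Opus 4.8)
The plan is to deduce the whole statement from the uniform range bound of Proposition \ref{seggiolino}, which I use to make rigorous the stabilization announced in \eqref{siegeauto}. Since $Q$ is compact, $Q\cap V$ is finite $P^\omega$-a.s., so the initial number of alive particles $\sum_{x\in Q\cap V}Z_0(x,a)$ has expectation at most $M\,\#\{Q\cap V\}$ and is thus finite $P^\omega$-a.s. Proposition \ref{seggiolino} then provides, $P^\omega$-a.s., a (random but finite) radius $Q'>0$ with $R_N(z_0,T)\le Q'$ for every $N\in\N$ simultaneously; in particular every alive particle of every $\wpi^{B_N}$ stays in the ball $\mathcal K:=\{x:\|x\|\le Q'\}$ on $[0,T]$.

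\emph{Existence of the limit.} Fix $N_0$ large enough that $\mathcal K\subseteq B_{N_0}$. For $N\ge N_0$ I claim the trajectories of $(\wpi^{B_N}_t)_{t\in[0,T]}$ and $(\wpi^{B_{N_0}}_t)_{t\in[0,T]}$ coincide. Both are driven by the same Poisson measures and start from $z_0$, so it suffices to argue by induction over their successive events, which are a.s.\ finitely many on $[0,T]$: the number of alive particles is a pure-birth process, hence a.s.\ bounded on $[0,T]$, and inside $\mathcal K$ each particle jumps at total rate at most $\max_{x\in \mathcal K\cap V}r(x)<\infty$. Births are governed by the $\NN_i^b$ and are automatically synchronized. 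As for jumps, a firing of $\NN_{i}^{X_{s-}^i,y}$ with $y\notin B_{N_0}$ would send a particle out of $\mathcal K$, contradicting $R_N(z_0,T)\le Q'$; hence along the realized trajectory every jump targets a site of $\mathcal K\cap V\subseteq B_{N_0}\cap V$, and the two processes perform exactly the same events. Therefore $\wpi^{B_N}_t(I,K,U)$ is eventually constant in $N$ for every fixed $I,K,U,t$, so the limit $\wpi_t(I,K,U)$ exists (indeed the entire trajectory stabilizes on $[0,T]$).

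\emph{Part (i).} For $N\ge N_0$ the limit $\wpi=\wpi^{B_N}$ solves \eqref{skype} with $B=B_N$. The sole difference between \eqref{skype} and \eqref{colosseo} is that the latter sums over all $y\in V$; but the extra terms carry targets $y\notin B_{N_0}$, whose clocks $\NN_{i}^{X_{s-}^i,y}$ do not fire on $[0,T]$ along the realized trajectory (again by the range bound), so they contribute nothing and $\wpi$ satisfies \eqref{colosseo}. Both sides are finite measures because the alive particles form a pure-birth branching process, a.s.\ finite at each time, while each jump creates exactly one ghost and the number of jumps on $[0,T]$ is a.s.\ finite; thus $\wpi_t(\mathcal I\times V\times\{a,g\})<\infty$ a.s.

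\emph{Part (ii) and the main obstacle.} For $N$ so large that $K\cup Q\subseteq B_N$, the configuration $z_0$ meets the hypotheses of Lemma \ref{capogiro} with $B=B_N$, giving $E^\omega[Z^{B_N}_T(K,\{a,g\})]\le C_K M\e^{bT}$ with $C_K=\sum_{x\in K\cap V}(b^{-1}r(x)+1)$ \emph{independent of $N$}. Since $Z^{B_N}_T(K,\{a,g\})\to Z_T(K,\{a,g\})$ $P^\omega$-a.s.\ by the stabilization above, Fatou's lemma yields $E^\omega[Z_T(K,\{a,g\})]\le\liminf_N E^\omega[Z^{B_N}_T(K,\{a,g\})]\le C_K M\e^{bT}$, which is \eqref{sangiovanni}. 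The crux of the argument is the stabilization step: one must exploit that Proposition \ref{seggiolino} furnishes a \emph{single} radius $Q'$ valid for all $N$ under the same driving noise, and then rule out far jumps on the realized path. It is precisely this a.s.\ finiteness of the range that turns the family $(\wpi^{B_N})_N$ into an eventually constant, hence convergent, sequence, and that makes the $N$-uniform constant $C_K$ of Lemma \ref{capogiro} usable in the limit.
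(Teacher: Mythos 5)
Your proposal is correct and follows essentially the same route as the paper: existence of the limit via the stabilization \eqref{siegeauto} furnished by Proposition \ref{seggiolino}, item (i) by passing to the limit in \eqref{skype}, and item (ii) by combining the $N$-uniform bound of Lemma \ref{capogiro} with Fatou's lemma. The only difference is cosmetic: you spell out the event-by-event coupling behind \eqref{siegeauto} and argue finiteness pathwise, whereas the paper treats the stabilization as an immediate consequence of Proposition \ref{seggiolino} and controls finiteness through the expectation bounds \eqref{amatriciana} and \eqref{carbonara}.
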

\begin{proof}
	The existence of $\wpi_t$ follows immediately from Proposition \ref{seggiolino}, since it implies that \eqref{siegeauto} holds $P^{\omega}$--a.s.~for all $T>0$. In particular, \eqref{siegeauto} and the fact that $\wpi_T^{B_N}(\mathcal I, \R^n,\{a,g\})<\infty$ almost surely (which follows by \eqref{ballaballaballa}) imply that  $\wpi_T(\mathcal I, \R^n,\{a,g\})<\infty$ almost surely. We let $\mathcal A_s:=\lim_{N\to\infty}\mathcal A_s^{B_N}$ for almost every realization of the process.
	
	\smallskip
	
	For item (i), we first notice that $(\wpi^{B_N}_t)$ satisfies \eqref{skype} with $B=B_N$.  $P^\omega$--a.s., for all bounded test functions $f$ with support on some set $C\subset \R^n$ and for all $t\in[0,T]$ we have
	\begin{align}\label{amatriciana}
	&\int_0^t \sum_{i\in \mathcal A_{s-},\,  y \in V} \left|f{(i1,y,a)} +f{(i,X_{s-}^i,g)} -f{(i,X_{s-}^i,a)}\right| \NN_{i}^{X_{s-}^i,y}({\rm d}s)\nonumber\\
		&\qquad\qquad\leq 3\|f\|_\infty \Big(\int_0^T \sum_{i\in \mathcal A_{s-},\,   X_{s-}^i \in C,\, y\in V}  \NN_{i}^{X_{s-}^i,y}({\rm d}s)
				+\int_0^T\sum_{i\in \mathcal A_{s-},\,   X_{s-}^i \not\in C,\, y\in C}  \NN_{i}^{X_{s-}^i,y}({\rm d}s)\Big)\nonumber\\
			&\qquad\qquad\leq 3\|f\|_\infty \wpi_T(\mathcal I, C,\{a,g\}) <\infty \end{align}
since $\wpi_T(\mathcal I, \R^n,\{a,g\})<\infty$.
		 Similarly the number of 
	births in $C$ is a.s.~controlled 
	by $\wpi_T(\mathcal I, C,\{a,g\})$:  
\begin{align}\label{carbonara}
	&\int_0^t \sum_{i\in \mathcal A_{s-}} \left|f{(i1,X_{s-}^i,a)} +f{(i2,X_{s-}^i,a)} -f{(i,X_{s-}^i,a)}\right| \NN_{i}^{b}({\rm d}s) 
		\leq 3\|f\|_\infty
		\wpi_T(\mathcal I, C,\{a,g\})\,.
	\end{align} 
	Indeed, for each newborn in $C$ there is either an active particle in $C$ or, at least, a ghost. 
To sum up, the integrals appearing on the right hand side of \eqref{colosseo} are almost surely well defined  on $\R^n$ and finite and (i) follows from $\eqref{skype}$  by letting $N$ go to infinity.
%
	
	\smallskip
	
We turn our attention to item (ii). By Fatou's lemma and Proposition \ref{capogiro},
\begin{align*}
	E^\omega[Z_{T}(K,\{a,g\})]
	\leq \liminf_{N\to\infty}E^\omega[Z_{T}^{B_N}(K,\{a,g\})]
	\leq C_KM\e^{bT}\,.
\end{align*}
This ends the proof.
\end{proof}

\subsection{Existence of the process on the infinite graph with infinitely many initial particles}\label{full}

In the previous section we have shown that the process $(\wpi_t)_{t\in [0,T]}$ is well defined as soon as the initial condition involves only a finite number of particles. 
We want to show the existence of 
$(\wpi_t)_{t\in [0,T]}$ also for initial configurations where the average number of particles on each site is bounded.

\smallskip

Consider an initial configuration of particles $Z_0\in\N^V\times \N^V$ of alive and ghost particles such that $E^\omega[Z_0(x,a)]\leq M$. For $N\in\N$ the truncated configuration $Z_{0,N}$ is obtained by considering only the particles in $Z_0$ that are inside the ball $B_N$: 
$$
Z_{0,N}(x,\cdot)=Z_0(x,\cdot)\ind{x\in B_N}\,.
$$
A central observation  is that we have monotonicity in $N$ of the process: take $N_1<N_2$ and couple the processes started in $Z_{0,N_1}$ and $Z_{0,N_2}$, call them $(\wpi_{t,N_j})_{t\geq 0}$ for $j=1,2$. Then we have $P^\omega$--a.s.
\begin{align}\label{alluvione}
\wpi_{t,N_1}(i,x,u)
	\leq \wpi_{t,N_2}(i,x,u)\qquad \forall i\in\mathcal I, \, x\in V,\, u\in\{a,g\}\,.
\end{align}
As a consequence, we have the following proposition.
\begin{prop}
	Consider a configuration of particles $Z_0\in\N^V\times \N^V$ such that $E^\omega[Z_0(x,a)]\leq M$ for all $x\in V$. Let $(\wpi_{t,N})_{t\geq 0}$ be the process on the infinite graph started in $Z_{0,N}$. 
	Then, $P^\omega$--a.s., for every $I\in\mathcal I$, $K\subset \R^n$ compact, $U\subseteq \{a,g\}$ and $t\geq 0$, the following limit exists  and is finite:
	$$
	\wpi_t(I,K,U):=\lim_{N\to\infty} \wpi_{t,N}(I,K,U)\,.
	$$
Furthermore  
		one has, for all $T>0$,
		\begin{align}\label{manzoni}
		E^\omega[
		\wpi_T(\mathcal I, K,\{a,g\})]
			\leq C_KM\e^{bT}
		\end{align}
		where $C_K= \sum_{x\in K}(b^{-1}r(x)+1)$ (for the case $b=0$ it holds the analogous of Remark \ref{octo}). 
\end{prop}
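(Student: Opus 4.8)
The plan is to leverage the monotonicity relation \eqref{alluvione} together with the uniform first-moment bound \eqref{sangiovanni} from Corollary \ref{giardinetti}. The key structural observation is that for each fixed $N$, the truncated configuration $Z_{0,N}$ has only finitely many particles (being supported in $B_N \cap V$, which contains $\P$-almost surely finitely many points of the Poisson process), so the process $(\wpi_{t,N})_{t\geq 0}$ is precisely the finite-initial-particle process constructed in Section \ref{finiteparticles}, to which Corollary \ref{giardinetti} applies directly. The monotone coupling \eqref{alluvione} says that, for each fixed label $I \in \mathcal I$, each compact $K$, each $U \subseteq \{a,g\}$ and each $t$, the sequence $N \mapsto \wpi_{t,N}(I,K,U)$ is non-decreasing and $\N$-valued. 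A monotone sequence in $\N \cup \{\infty\}$ always has a limit, so the limit $\wpi_t(I,K,U)$ exists pointwise; the only content is to verify that the limit is \emph{finite}, i.e.\ that the monotone sequence does not run off to infinity.

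For finiteness I would argue at the level of the total mass on a compact set rather than on individual labels. First note that by the monotone coupling, $Z_{t,N}(K,\{a,g\}) := \wpi_{t,N}(\mathcal I, K, \{a,g\})$ is non-decreasing in $N$, so by the monotone convergence theorem
\begin{align*}
E^\omega[\wpi_T(\mathcal I, K, \{a,g\})]
= E^\omega\Big[\lim_{N\to\infty} Z_{T,N}(K,\{a,g\})\Big]
= \lim_{N\to\infty} E^\omega[Z_{T,N}(K,\{a,g\})]\,.
\end{align*}
Now each $Z_{0,N}$ satisfies the hypotheses of Corollary \ref{giardinetti}(ii) with the \emph{same} constant $M$ (truncation only removes particles, so the bound $E^\omega[Z_{0,N}(x,a)] \leq M$ persists), with compact support $Q = K \cup B_N$; crucially, the bound \eqref{sangiovanni} has constant $C_K = \sum_{x\in K}(b^{-1}r(x)+1)$ which depends only on $K$ and \textbf{not} on the truncation region. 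Hence $E^\omega[Z_{T,N}(K,\{a,g\})] \leq C_K M \e^{bT}$ for every $N$, and passing to the limit yields the desired bound \eqref{manzoni}. Since $C_K < \infty$ (recall $r(x) < \infty$ for all $x$, $\P$-a.s., and $K \cap V$ is finite), the expectation is finite, which forces $\wpi_T(\mathcal I, K, \{a,g\}) < \infty$ $P^\omega$-a.s., and a fortiori each $\wpi_t(I,K,U) < \infty$.

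The main subtlety I anticipate is the uniformity of $C_K$ in $N$, which is exactly what makes the argument go through and which is why Lemma \ref{capogiro} was stated with $C_K$ independent of the confining box $B$. Without this, the naive bound would blow up as $N \to \infty$. A secondary point requiring care is that the pointwise limit over individual labels $I$ combined with finiteness of the total mass is what legitimizes calling $\wpi_t$ a genuine (finite-on-compacts) measure: one should check that the almost-sure finiteness of $\wpi_T(\mathcal I, K, \{a,g\})$, which holds simultaneously for a countable exhausting family of compacts $K$ (say closed balls of integer radius) and a countable dense set of times, transfers to all $t \leq T$ by the monotonicity in $t$ of the cumulative birth and jump counts, exactly as in the proof of Corollary \ref{giardinetti}(i). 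I would then remark that item (i)-type identities (the measure $\wpi_t$ solving \eqref{colosseo}) follow by the same dominated-convergence argument as in Corollary \ref{giardinetti}, the dominating random variable being $3\|f\|_\infty \wpi_T(\mathcal I, C, \{a,g\})$, now known to be integrable.
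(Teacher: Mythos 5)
Your proof is correct and follows essentially the same route as the paper: existence of the limit via the monotone coupling \eqref{alluvione}, then monotone convergence in $N$ combined with the bound \eqref{sangiovanni} of Corollary \ref{giardinetti} (whose constant $C_K$ is uniform in the truncation box), and finally monotonicity in $t$ of the alive-plus-ghost count to control the whole time interval. The only difference is cosmetic: you spell out the verification that $Z_{0,N}$ satisfies the hypotheses of Corollary \ref{giardinetti} and defer the equation-\eqref{colosseo} identities to a dominated-convergence remark, which the paper handles in the corollary that follows the proposition.
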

Notice that we have called the limiting process again $(\wpi_t)_{t\geq 0}$, since we have extended the definition appearing in Corollary \ref{giardinetti} to a larger set of initial conditions.

\begin{proof}
		The existence of the limit follows by the monotonicity in \eqref{alluvione}. Fix any $T>0$. We want to show now that, $P^\omega$--a.s., $\wpi_t(\mathcal I,K,\{a,g\})$ does not explode for any compact $K\subset \R^n$ and $0\leq t\leq T$. 
		Keeping in mind \eqref{alluvione}, we can use monotone convergence in $N$ to see that
	\begin{align}\label{chouchou}
	E^\omega\Big[\sup_{t\in[0,T]}\wpi_t(\mathcal I,K,\{a,g\})\Big]
		&= E^\omega[\wpi_T(\mathcal I,K,\{a,g\})]\nonumber\\
		&=\lim_{N\to\infty}E^\omega[\wpi_{T,N}(\mathcal I,K,\{a,g\})]
		\stackrel{\eqref{sangiovanni}}{\leq}C_KM\e^{bT}<\infty\,, 
	\end{align}
	where for the first equality we have used the fact that $\wpi_t(\mathcal I,K,\{a,g\})$ is also monotone in $t$, since each new event does not decrease the total number of particles in $K$.
\end{proof}
	It follows that $\sup_{t\in[0,T]}\wpi_t(\mathcal I,K,\{a,g\})$ is finite $P^\omega$--almost surely. Notice in particular that this implies that $\wpi_{t,N}(I,K,U)=\wpi_{t,M}(I,K,U)$ for all $N,M$ large enough. If this was not the case, we would have an infinite sequence of initial particles, coming from arbitrary far away, whose progeny would enter $K$ before time $T$, thus making $\wpi_T(\mathcal I,K,\{a,g\})$ explode.
\begin{cor}
	The following holds.
\begin{itemize}
	\item[(i)]  For any $t\geq 0$ and $f : \mathcal I\times \R^n \times\{a,g\}\rightarrow \R$ measurable and  compactly supported  in the second coordinate: 
\begin{align*}
			&E^\omega\Big[\int_0^t \sum_{i\in \mathcal A_{s-}, y \in V} \left\vert f(i1,y,a) +f(i,X_{s-}^i,g) -f(i,X_{s-}^i,a)\right\vert \NN_{i}^{X_{s-}^i,y}({\rm d}s) \Big]<\infty.\\
			&E^\omega\big[\int_0^t \sum_{i\in \mathcal A_{s-}} \left\vert f(i1,X_{s-}^i,a) +f(i2,X_{s-}^i,a) -f(i,X_{s-}^i,a)\right\vert \NN_{i}^{b}({\rm d}s)\Big]<\infty.
			\end{align*}
\item[(ii)] For such functions $f$ and $t\geq 0$, the following
 identity holds a.s.
		\begin{align*}
\langle \wpi_t, f\rangle
	=\langle \wpi_0, f\rangle &+\int_0^t \sum_{i\in \mathcal A_{s-},\,y  \in V} \left(f(i1,y,a) +f(i,X_{s-}^i,g) -f(i,X_{s-}^i,a)\right) \NN_{i}^{X_{s-}^i,y}({\rm d}s) \nonumber\\
	&  +\int_0^t \sum_{i\in \mathcal A_{s-}} \left(f(i1,X_{s-}^i,a) +f(i2,X_{s-}^i,a) -f(i,X_{s-}^i,a)\right) \NN_{i}^{b}({\rm d}s).
\end{align*} 
\end{itemize}
\end{cor}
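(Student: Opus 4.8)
The plan is to establish the two items of this final corollary by combining the pointwise identity \eqref{colosseo} (equivalently \eqref{skype}), already available for the finite-particle / finite-graph constructions, with the integrability bound \eqref{manzoni} just proved in the preceding proposition. The key observation is that both statements are really about \emph{finiteness of expected total variation}, after which (i) is immediate and (ii) follows by passing to the limit in the corresponding identity for the truncated processes.

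First I would prove item (i). The two integrands are, up to the factor $3\|f\|_\infty$ coming from bounding each of the three Dirac-type terms, dominated exactly as in the estimates \eqref{amatriciana} and \eqref{carbonara} from the proof of Corollary~\ref{giardinetti}: the jump term is controlled by the number of alive particles that enter or sit inside the compact support $C$ of $f$ (in the second coordinate) up to time $t$, and the birth term is controlled because every newborn in $C$ is accompanied by either an alive particle or a ghost in $C$. Both of these counts are bounded by $\wpi_T(\mathcal I, C,\{a,g\})$. Therefore, taking expectations and invoking \eqref{manzoni} with $K=C$ gives
\begin{align*}
E^\omega\Big[\int_0^t \sum_{i\in \mathcal A_{s-}, y \in V}\left\vert \cdots\right\vert \NN_{i}^{X_{s-}^i,y}({\rm d}s)\Big]
\;\leq\; 3\|f\|_\infty\, E^\omega\big[\wpi_T(\mathcal I, C,\{a,g\})\big]
\;\leq\; 3\|f\|_\infty\, C_C M\e^{bT}<\infty,
\end{align*}
and likewise for the birth integral. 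This is precisely item (i).

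For item (ii) I would argue by the monotone approximation already set up. The truncated process $(\wpi_{t,N})_{t\geq0}$ started from $Z_{0,N}$ has only finitely many initial particles, so by Corollary~\ref{giardinetti}(i) it satisfies \eqref{colosseo}, hence tested against $f$ it satisfies the claimed identity with $\wpi$ replaced by $\wpi_{\cdot,N}$. Now let $N\to\infty$. By the monotonicity \eqref{alluvione} and the stabilization remark preceding this corollary, for $P^\omega$-almost every realization the configurations $\wpi_{t,N}$ are eventually constant on $\mathcal I\times K\times\{a,g\}$ for each fixed compact $K$, so both sides converge termwise; the left-hand side $\langle\wpi_{t,N},f\rangle\to\langle\wpi_t,f\rangle$ and the two stochastic integrals converge because for $N$ large the integrands coincide with those of the limiting process on the support $C$. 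Dominated convergence, justified by the integrable majorant constructed in item (i), legitimizes the passage to the limit inside the integrals.

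The main obstacle is not the algebra but making the limit passage rigorous: one must be careful that the stabilization in \eqref{siegeauto}/\eqref{alluvione} is a statement on compact sets, while the sums in the identity range over all $y\in V$. The point to check is that any jump or birth that \emph{affects} the test function $f$ must involve a particle alive in, or a ghost deposited in, the support $C$ of $f$; hence only finitely many labels $i$ and sites contribute up to time $T$, and for those the truncated and limiting dynamics agree once $N$ is large. Once this localization is spelled out, the termwise convergence together with the uniform integrable bound from (i) closes the argument via dominated convergence.
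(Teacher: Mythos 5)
Your proposal is correct and follows essentially the same route as the paper: item (i) via the domination bounds \eqref{amatriciana}--\eqref{carbonara} combined with the moment bound \eqref{manzoni}/\eqref{chouchou}, and item (ii) by passing to the limit in the identity for the truncated processes $(\wpi_{t,N})$, using the monotonicity \eqref{alluvione} and the fact that all terms localized to the compact support of $f$ stabilize for $N$ large. The paper's proof is in fact terser --- it simply notes that the terms in \eqref{colosseo} are constant for $N\geq N_0$, making your additional appeal to dominated convergence unnecessary, though harmless.
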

\begin{proof} The first part is a consequence \eqref{amatriciana} and \eqref{carbonara}  for  bounded  functions $f$ with support on some compact set $C\subset \R^n$, together with \eqref{chouchou}  which guarantees finiteness.
We are left to show that $(\wpi_t)$ is a solution of  equation \eqref{colosseo} on any compact set, where now the initial population can be non bounded. 
	By choosing $N_0$ large, the terms involved in \eqref{colosseo} for $(\wpi_{t,N})_{t\in [0,T]}$ are all  constant for $N\geq N_0$, 
	which ends the proof.
\end{proof}

Recall that $\pi_t$ is the projection of $\wpi_t$ on alive particles, i.e.~for $B\subset \R^n$ Borel set
$\pi_t(B):=\wpi_t(\mathcal I\times (B\cap V)\times \{a\})$. 
For every $f : \R^n\rightarrow \R$ with compact support, we get
\begin{align}\label{colosseo3}
\langle \pi_t,f \rangle
	=\langle \pi_0, f\rangle +\int_0^t \sum_{i\in \mathcal A_{s-}, y \in V}  \big(f(y)  -f(X_{s-}^i)\big) \,\NN_{i}^{X_{s-}^i,y}({\rm d}s) +\int_0^t \sum_{i\in \mathcal A_{s-}} f(X_{s-}^i)  \,\NN_{i}^{b}({\rm d}s)	\,.
\end{align}
We can now justify that the generator of this process is given by \eqref{generator} and end the proof of Theorem \ref{existence}. More precisely, let us check that for all $G$ compactly supported on $\R^n$,
$$
M^G_t=\langle \pi_t,G \rangle-\langle \pi_0,G \rangle - \int_0^t  \mathcal L f_G(\eta_s) \,{\rm d}s
$$
is indeed a martingale, 
where we recall that $\eta_t(x)=\pi_t(\{x\})$ and for $f_G(\eta)=\sum_{x\in V} G(x)\eta(x)$
		\begin{align}\label{generator2}
\mathcal L f_G(\eta)
	=&\sum_{x,y\in V}\eta(x)r(x,y)\big(G(y)-G(x)\big)+\sum_{x\in V}\eta(x)\big(b- d\big) G(x).
\end{align} The fact that $\langle \pi_t,G \rangle$ is integrable is due to \eqref{manzoni}. The fact
that
$E^\omega[\int_0^t  \vert \mathcal L f_G\vert (\eta_s) {\rm d}s]$ is finite is due to 
 \eqref{amatriciana} and \eqref{carbonara}, which allows us to bound this term by $E^\omega[\wpi_t(\mathcal I, C,\{a,g\})]$. 
 Besides  
  		\begin{align*}
M^G_t
	=\langle \wpi_0, f\rangle &+\int_0^t \sum_{i\in \mathcal A_{s-},\,y  \in V} \left(f(i1,y,a) +f(i,X_{s-}^i,g) -f(i,X_{s-}^i,a)\right) \widetilde{\NN}_{i}^{X_{s-}^i,y}({\rm d}s) \nonumber\\
	&  +\int_0^t \sum_{i\in \mathcal A_{s-}} \left(f(i1,X_{s-}^i,a) +f(i2,X_{s-}^i,a) -f(i,X_{s-}^i,a)\right) \widetilde{\NN}_{i}^{b}({\rm d}s),
\end{align*} 
 where $\widetilde{\NN}_{i}$ and $\widetilde{\NN}_{i}^{b}$ are the compensated Poisson point measures. Again,  \eqref{amatriciana} and \eqref{carbonara} provide the integrability condition for stochastic $L^1$ martingale with jumps, see for example \cite{IW89}. Thus, $M^G$ inherits the  martingale property.
 This ensures that $ \mathcal L$ provides the generator for  functions of the form $f_G$.

\section{Input from homogenization} \label{inputfromhomo}

In this section we set the homogenization tools that are needed to prove the hydrodynamic limit in Theorem \ref{maintheorem}. Notice that the results we collect are mainly inherent to the environment $\omega\in\Omega$: the specific particle dynamics we are analyzing only enters in these results through the generator of the simple random walk $L^N$.

\subsection{Assumptions for homogenization on point processes}\label{input}

In \cite{F22} Faggionato proves homogenization for a wide class of random walks on purely atomic measures on $\R^n$ under some regularity assumptions for the environment, called (A1),...,(A9). Our proof of Theorem \ref{maintheorem}  relies on these homogenization results. We first state these assumptions in a simplified way, adapted to our context. 
We check then that they are indeed satisfied by our model.\\

Consider the Abelian group $\mathbb G=\R^n$. Suppose that $\mathbb G$ acts on the probability space $(\Omega,\P,\mathcal F)$ through a family of measurable maps $(\theta_g)_{g\in\mathbb G}$, with $\theta_g:\,\Omega\to\Omega$, that satisfies  the following (see $(P_1),\dots,(P4)$ in \cite{F22}):  $\theta_0$ is the identity; $\theta_g\circ \theta_{g'}=\theta_{g+g'}$ for all $g,g'\in\mathbb G$; the map $(g,\omega)\to\theta_g\omega$ is measurable; $\P\circ\theta_g^{-1}=\P$ for all $g\in \mathbb G$. The group $\mathbb G$ acts also on the space $\R^n$ as space-translations $(\tau_g)_{g\in\mathbb G}$ so that $\tau_gx=x+g$ for all $g\in\mathbb G$ and $x\in \R^n$.  Suppose to have a random purely atomic locally finite non-negative measure $\mu_\omega\in \mathcal M(\R^n)$
\begin{align*}
\mu_\omega=\sum_{x\in \hat\omega}n_x(\omega)\delta_x,\qquad n_x(\omega):=\mu_\omega(\{x\}),\qquad \hat\omega:=\{x\in\R^n:\,n_x(\omega)>0\}\,.
\end{align*}
Let $\mathbb P_0$ be the Palm measure associated to $\P$ and $\E_0$ the related expectation (see for example \cite[equation (9)]{F22} for the precise definition and \cite{DVJ08} for further properties of the Palm distribution). Finally let $r:\,(\omega,x,y)\to r(\omega,x,y)\in[0,\infty)$ be the jump rates with $r(\omega,x,x)=0$ for all $x\in\R^n$ and $\omega\in\Omega$, and $r(\omega,x,y)=0$ when $x$ or $y$ is not in $\hat\omega$. Then the nine assumptions are the following, with $\Omega_*$ some measurable, translation invariant subset of $\Omega$  with $\P(\Omega_*)=1$:

\begin{itemize}
	\item[(A1)] $\P$ is stationary and ergodic w.r.t.~$(\theta_g)_{g\in\mathbb G}$. That is, $\P\circ\theta_g^{-1}=\P$ for all $g\in \mathbb G$ and, 
	for each $A\subseteq \Omega$ such that $A=\theta_{g}A$ for all $g\in\mathbb G$, one has $\P(A)\in\{0,1\}$; 
	\item[(A2)] $0<\E[\mu_\omega([0,1)^n)]<\infty$;
	\item[(A3)] for all $\omega\in\Omega_*$ and all $g\neq g'$ it holds $\theta_g\omega\neq\theta_{g'}\omega$;
	\item[(A4)] for all $\omega\in\Omega_*$, $\mu_\omega $ is $\mathbb G$--stationary: for all $x,y\in\R^n$ and for all $g\in\mathbb G$ it holds $\mu_{\theta_g\omega}=\tau_g\mu_{\omega}$ and $r(\theta_g\omega,x,y)=r(\omega,\tau_gx,\tau_gy)$;
	\item[(A5)] for all $\omega\in\Omega_*$ and for all $x,y\in\hat\omega$ it holds $n_x(\omega)r(\omega,x,y)=n_y(\omega)r(\omega,y,x)$;
	\item[(A6)] for all $\omega\in\Omega_*$ and for all $x,y\in\hat\omega$ there exists a path $x=x_0,\,x_1,\dots,x_{n-1},x_n=y$ such that $r(\omega,x_i,x_{i+1})>0$ for all $i=0,\dots,n-1$;
	\item[(A7)] $\E_0\big[\sum_{x\in\hat\omega}r(\omega,0,x)|x|^k\big]<\infty$  for $k=0,2$;
	\item[(A8)]  $L^2(\P_0)$ is separable;
	\item[(A9)] setting $N_z(\omega):=\mu_\omega(z+[0,1)^n)$ for $z\in\mathbb Z^n$, it holds $\E[N_0^2]<\infty$ and, for some $C\geq 0$, $|Cov(N_z,N_{z'})|\leq C|z-z'|^{-1}$.
\end{itemize}

\smallskip

We prove now that these assumptions are satisfied for our model. 
\begin{lem}\label{corredo}
The complete graph $\mathcal G=(V,E)$ on a Poisson point process of parameter $\gamma>0$ in $\R^n$ with transition rates $r(x,y)=r(y,x)=\e^{-\|x-y\|}$ (with the convention $r(x,x)=0$) satisfies assumptions $(A1),\dots,(A9)$ of \cite{F22} by taking: 
	$\Omega$ the locally finite subsets of $\mathbb R^n$; $\P$  the law of a Poisson point process of parameter $\gamma>0$ in $\R^n$; $(\theta _g)_{g\in\R^n}$ the standard translations and $\mu_\omega$ the counting measure: if $\omega=\{x_i\}_{i\in\N}$ then $\theta_g\omega=\{x_i-g\}_{i\in\N}$ and $\mu_\omega(A)=\sum_{x\in\omega}\delta_x(A)$ for all measurable sets $A\subseteq\R^n$.
\end{lem}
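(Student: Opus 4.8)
The plan is to verify the nine homogenization assumptions (A1)--(A9) of \cite{F22} one by one for the specific choices made in the statement: $\Omega$ the space of locally finite subsets of $\R^n$, $\P$ the law of a Poisson point process of intensity $\gamma>0$, $(\theta_g)_{g\in\R^n}$ the standard spatial translations acting by $\theta_g\omega=\{x_i-g\}$, $\mu_\omega$ the counting measure $\sum_{x\in\omega}\delta_x$, and jump rates $r(\omega,x,y)=\e^{-\|x-y\|}\ind{x,y\in\omega}$. Most of these are either immediate from standard properties of the Poisson point process or follow from the exponential decay of $r$; the bulk of the real work concentrates in the moment and covariance bounds (A7) and (A9), which I expect to be the main obstacle since they require genuine estimates rather than structural observations.

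First I would dispatch the structural assumptions. For (A1), stationarity $\P\circ\theta_g^{-1}=\P$ is the translation invariance of the Poisson law, and ergodicity under the translation group is a classical fact for Poisson point processes (mixing, hence ergodic). Assumption (A2) reduces to $\E[\mu_\omega([0,1)^n)]=\gamma\in(0,\infty)$, the mean number of points in the unit cube. For (A3), I would take $\Omega_*$ to be the set of configurations that are not periodic under any nonzero translation; since $\P$-a.s.\ a Poisson configuration has no such symmetry (two distinct translates differ almost surely), this holds with $\P(\Omega_*)=1$, and $\Omega_*$ is translation invariant. Assumption (A4) is just the compatibility of the counting measure and the rates with translations, which holds by construction because $\mu_{\theta_g\omega}=\tau_g\mu_\omega$ and $r$ depends only on $\|x-y\|$. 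For (A5) reversibility, since $n_x(\omega)\equiv 1$ on $\hat\omega$ and $r$ is symmetric, $n_x r(\omega,x,y)=r(\omega,x,y)=r(\omega,y,x)=n_y r(\omega,y,x)$ trivially. Assumption (A6) (irreducibility) holds because $r(\omega,x,y)=\e^{-\|x-y\|}>0$ for every pair of distinct points in $\hat\omega$, so the complete graph is connected via the direct edge. For (A8), separability of $L^2(\P_0)$ follows from the standard fact that the Palm measure of a Poisson process is itself a well-behaved (Polish) measure, so its $L^2$ space is separable.

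The two quantitative assumptions require estimates. For (A7) I must show $\E_0\big[\sum_{x\in\hat\omega}r(\omega,0,x)|x|^k\big]<\infty$ for $k=0,2$; under the Palm measure the origin is added to the configuration and, by the Mecke/Slivnyak formula, the remaining points are again a Poisson process of intensity $\gamma$. Hence this expectation equals $\gamma\int_{\R^n}\e^{-\|x\|}|x|^k\,{\rm d}x$, which is finite for every $k$ because the exponential decay of $r$ dominates any polynomial growth; this is precisely the place where the form $r(x,y)=\e^{-\|x-y\|}$ is essential. For (A9), writing $N_z(\omega)=\mu_\omega(z+[0,1)^n)$ for $z\in\Z^n$, the $N_z$ are independent Poisson random variables (disjoint cells of a Poisson process), so $\E[N_0^2]=\gamma^2+\gamma<\infty$ and $\mathrm{Cov}(N_z,N_{z'})=0$ for $z\neq z'$, trivially satisfying $|\mathrm{Cov}(N_z,N_{z'})|\leq C|z-z'|^{-1}$ with any $C\geq 0$. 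The main subtlety throughout is bookkeeping under the Palm measure in (A7): I would invoke Slivnyak's theorem to identify $\P_0$ with $\P$ shifted by an added atom at the origin, after which the integral becomes elementary. The remaining work is routine, and I anticipate the only calculation worth spelling out is the finiteness of $\int_{\R^n}\e^{-\|x\|}|x|^2\,{\rm d}x$.
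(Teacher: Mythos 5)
Your proposal is correct and follows essentially the same route as the paper's own proof: structural assumptions (A1)--(A6) and (A8) verified directly from stationarity of the Poisson process, symmetry of the rates, and completeness of the graph, with (A7) handled via the Slivnyak characterization of the Palm measure (added point at the origin) and (A9) via independence of point counts in disjoint boxes giving zero covariance and $\E[N_0^2]=\gamma^2+\gamma$. The only difference is that you spell out the Campbell/Mecke computation $\gamma\int_{\R^n}\e^{-\|x\|}|x|^k\,{\rm d}x<\infty$ and the ergodicity argument in more detail than the paper, which mostly defers to \cite[Section 5.4]{F22}.
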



\begin{proof}[Proof of Lemma \ref{corredo}]
	The proof is a special case of the discussion contained in \cite[Section 5.4]{F22}, we report few details here for completeness.
	Clearly in this case $\omega=\hat\omega=V(\omega)$ is the set of vertices of our graph and almost surely $n_x(\omega)=1$ for each point $x\in V$. (A1), (A2) and (A3)  clearly hold. (A4) and (A5) also come from the stationarity of the Poisson point process and from our choice of the rates. (A6) is trivial since we are considering the complete graph. For (A7) and (A8), we mention that  the Palm measure associated to the underlying Poisson point process can be obtained by just adding an additional point to the configuration at the origin. (A7) is easy to verify, while for (A8) see the comment at the end of Section 2.4 in \cite{F22}. Finally, $N_z$ is just the number of points in the box $z+[0,1)^n$, so that $\E[N_0^2]=\gamma^2+\gamma<\infty$ and the covariance appearing in (A9) is just equal to 0. 	
\end{proof}

\subsection{The Poisson equation}\label{correctedempiricalmeasure}

Fix $\omega\in\Omega$.
Recall from \eqref{generatorrw} that $L^N$ is the generator of the diffusively rescaled random walk on $V/N:=\{x/N:\,x\in V(\omega)\}$ with transition rates $N^2r(\cdot,\cdot)$. 
We think of $L^N$ as acting on functions in $L^2(\mu_N)$, where $\mu_N=\mu_N(\omega)$  is the uniform measure on $V/N$, that is 
$$
\mu_N:=N^{-n}\sum_{x\in V}\delta_{ x/ N}\,.
$$ 
We write $(\cdot,\cdot)_{\mu_N}$  and  $\|\cdot\|_{L^2(\mu_N)}$ for, respectively, the scalar  product and the norm in $L^2(\mu_N)$. Note that $L^N$ is a negative-definite symmetric operator:
for any $f,g\in L^2(\mu_N)$
$$
(f,L^Ng)_{\mu_N}=(L^Nf,g)_{\mu_N}\qquad\mbox{and}\qquad (f,-L^Nf)_{\mu_N}\geq 0\,.
$$

The following definition is justified by the fact that $L^N$ should approach in some sense the continuous operator $\sigma^2\Delta$.
\begin{defi}\label{definitiongnl}
	Given $\l>0$, $G\in C_c^\infty(\R^n)$ and $N\in\N$, we define $G^\l_N$ to be the unique element in  $L^2(\mu_N)$ such that
	\begin{align}\label{glambda}
	\l G_N^\l-L^NG^\l_N=H_N\,
	\end{align}
	where $H_N$ is the restriction to $V/N$ of the function $H=H(\l)=\l G-\sigma^2\Delta G\in C_c^\infty(\R^n)$.
\end{defi}
Notice that the introduction of $\l>0$ is just an artifice to make  $\l Id - L^N$ invertible, where $Id$ is the identity operator, and that $\l$ will be fixed and play basically no role in what follows. The idea for introducing $\gnl$ is that $L^N\gnl$ is more regular than $L^N G$ (for example inequality \eqref{regulargnl} here below might fail for a general $G$).
This regularizing procedure is associated to the so-called corrected empirical measure in the literature, see \cite{GJ08} for more comments on this.

The next lemma is where homogenization theory enters the game. We report some of the results appearing in \cite{F22}, and in particular Theorem 4.4 therein.
\begin{lem}\label{homogenization}
	Fix $\l>0$. Then for $\mathbb P$-a.a.~$\omega$ and for each $G\in C_c^\infty(\R^n)$ it holds
	\begin{align}
	&(\gnl,-L^N\gnl)_{\mu_N}\leq c(\l,G)\label{gnlinner}\\
	&\|\gnl\|_{L^1(\mu_N)}\,,\;\|\gnl\|_{L^2(\mu_N)}\leq c(\l,G)\label{gnll1}\\
	&\|L^N \gnl\|_{L^1(\mu_N)}\,,\;\|L^N\gnl\|_{L^2(\mu_N)}\leq c(\l,G)\,,\label{regulargnl}
	\end{align}
	where $c(\l,G)>0$ is a constant not depending on $N$. Furthermore
	\begin{align}
	\lim_{N\to\infty}\|\gnl-G\|_{L^1(\mu_N)}&=0 \label{eqhomogenization1}\\
	\lim_{N\to\infty}\|\gnl-G\|_{L^2(\mu_N)}&=0 \label{eqhomogenization2}\,.
	\end{align}
\end{lem}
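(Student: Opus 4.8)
The plan is to separate the six claims into the a priori bounds \eqref{gnlinner}--\eqref{regulargnl}, which are soft structural consequences, and the two convergences \eqref{eqhomogenization1}--\eqref{eqhomogenization2}, which carry the genuine homogenization content and are where \cite{F22} enters. For the bounds I would first observe that, since the rates are symmetric and $\mu_N$ is the uniform measure, $L^N$ generates a conservative, $\mu_N$--symmetric Markov semigroup $(P^N_t)_{t\ge0}$; it is therefore a contraction on both $L^1(\mu_N)$ and $L^2(\mu_N)$, and self-adjoint and negative semidefinite on $L^2(\mu_N)$ as already recorded before Definition \ref{definitiongnl}. Hence the resolvent $R_\l:=(\l-L^N)^{-1}=\int_0^\infty\e^{-\l t}P^N_t\,\dif t$ obeys, for $p\in\{1,2\}$,
\[
\|R_\l f\|_{L^p(\mu_N)}\le \l^{-1}\|f\|_{L^p(\mu_N)}.
\]
Since $\gnl=R_\l H_N$ by Definition \ref{definitiongnl}, this gives $\|\gnl\|_{L^p(\mu_N)}\le \l^{-1}\|H_N\|_{L^p(\mu_N)}$, so it suffices to bound $\|H_N\|_{L^p(\mu_N)}$ uniformly in $N$. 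As $H\in C_c^\infty(\R^n)$, the average $\|H_N\|_{L^p(\mu_N)}^p=N^{-n}\sum_{x\in V}|H(x/N)|^p$ converges $\P$--a.s.\ to $\gamma\int_{\R^n}|H|^p$ by the ergodic theorem for the Poisson point process, which is legitimate by (A1)--(A2) as verified in Lemma \ref{corredo}; in particular it is bounded, proving \eqref{gnll1}. For \eqref{regulargnl} I would use $L^N\gnl=\l\gnl-H_N$, which follows directly from \eqref{glambda}, so that $\|L^N\gnl\|_{L^p(\mu_N)}\le\l\|\gnl\|_{L^p(\mu_N)}+\|H_N\|_{L^p(\mu_N)}\le 2\|H_N\|_{L^p(\mu_N)}$. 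Testing \eqref{glambda} against $\gnl$ and discarding the nonnegative term $\l\|\gnl\|_{L^2(\mu_N)}^2$ yields $(\gnl,-L^N\gnl)_{\mu_N}\le(\gnl,H_N)_{\mu_N}\le\|\gnl\|_{L^2(\mu_N)}\|H_N\|_{L^2(\mu_N)}$, which is \eqref{gnlinner}.

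For the convergences \eqref{eqhomogenization1}--\eqref{eqhomogenization2} the plan is to invoke the homogenization theorem of \cite{F22}. Having checked in Lemma \ref{corredo} that our model satisfies (A1)--(A9), I would apply \cite[Theorem 4.4]{F22}, which states precisely that the solution $\gnl$ of the discrete resolvent equation $\l\gnl-L^N\gnl=H_N$ converges, in $L^1(\mu_N)$ and $L^2(\mu_N)$, to the unique weak solution $u$ of the continuum equation $\l u-\sigma^2\Delta u=H$, the diffusion coefficient $\sigma^2$ being the one identified by the variational formula \eqref{variazionale}. Since $H$ was defined as $H=\l G-\sigma^2\Delta G$ in Definition \ref{definitiongnl}, the function $G$ itself is this solution $u$, and \eqref{eqhomogenization1}--\eqref{eqhomogenization2} follow once the notation is matched.

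Finally I would briefly recall the mechanism behind that theorem, where the real difficulty sits. The $L^2$ bound together with the Dirichlet-form bound \eqref{gnlinner} gives compactness of $\gnl$ in a two-scale sense on the random support $V/N$; one extracts a two-scale limit consisting of a macroscopic profile and a microscopic corrector defined on the Palm space $(\Omega,\P_0)$, and passes to the limit in the weak form of the resolvent equation. The corrector absorbs the microscopic oscillations of the rates and produces the effective operator $\sigma^2\Delta$, with $\sigma^2$ emerging exactly as the infimum \eqref{variazionale} over $\psi\in B(\Omega)$; uniqueness of the weak solution of the limiting elliptic equation then promotes subsequential convergence to convergence of the whole sequence. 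The main obstacle is this homogenization step on an irregular random geometry with unbounded local jump rates, and it is entirely delegated to \cite{F22}; on our side the substantive work is the verification of (A1)--(A9) in Lemma \ref{corredo}, the remaining estimates being soft.
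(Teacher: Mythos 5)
Your proposal is correct and takes essentially the same route as the paper: the genuine homogenization content, i.e.\ the convergences \eqref{eqhomogenization1}--\eqref{eqhomogenization2}, is delegated to \cite[Theorem 4.4]{F22} after the verification of (A1)--(A9) in Lemma \ref{corredo}, exactly as in the paper's proof. The only difference is that you spell out the standard resolvent-contraction and Dirichlet-form testing arguments for the a priori bounds \eqref{gnlinner}--\eqref{regulargnl} (which are sound, modulo noting that sub-Markovianity of the minimal semigroup suffices, so conservativity need not be checked), whereas the paper disposes of these by citing the proof of \cite[Theorem 4.4]{F22} and \cite[Lemma 3.1]{F10}.
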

\begin{proof}
	Thanks to Lemma \ref{corredo} we can use the results of \cite{F22}.
	Formulas \eqref{eqhomogenization1} and \eqref{eqhomogenization2} appear as formulas (54) and (55) in Theorem 4.4 of \cite{F22}. The bounds \eqref{gnll1} and \eqref{regulargnl} follow immediately, see also the proof of Theorem 4.4 of \cite{F22}. 
	The bound \eqref{gnlinner} is standard (see e.g.~the beginning of the proof of \cite[Lemma 3.1]{F10}).
\end{proof}

As a result of having to deal with a non-conservative system, in order to study the hydrodynamic limits we will also have to control  the $L^2(\mu_N)$ norm of $L^NG$.

\begin{lem} \label{tiramisu}
	Let $G\in \mathcal C_c^\infty(\R^n)$ and $n\geq 2$. Then, $\P$--a.s.,
	$$
	\lim_{N\to\infty}N^{-n}\|L^N G\|_{L^2(\mu_N)} =0\,.
	$$
\end{lem}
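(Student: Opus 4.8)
The plan is to estimate $\|L^N G\|_{L^2(\mu_N)}^2 = N^{-n}\sum_{x\in V}(L^N G(x/N))^2$ through a pointwise bound on $L^N G(x/N)$ combined with a count of the sites that actually contribute. The key observation is that, although $L^N$ carries a prefactor $N^2$, the smoothness of $G$ turns the increment $G(y/N)-G(x/N)$ into a factor of order $N^{-1}\|x-y\|$, so that only a single power of $N$ survives; the remaining weighted rate sum then grows only logarithmically by a variant of Lemma \ref{tyran}.

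First I would derive the pointwise bound. By the mean value theorem $|G(y/N)-G(x/N)|\leq N^{-1}\|\nabla G\|_\infty \|x-y\|$, whence
$$
|L^N G(x/N)| \leq N^2\sum_{y\in V}\e^{-\|x-y\|}\,|G(y/N)-G(x/N)| \leq N\,\|\nabla G\|_\infty \sum_{y\in V}\e^{-\|x-y\|}\|x-y\|.
$$
The weighted sum $\sum_{y\in V}\e^{-\|x-y\|}\|x-y\|$ is controlled exactly as in Lemma \ref{tyran}: the extra factor $\|x-y\|$ is harmless because $\sum_{k}k^{n}\e^{-k}<\infty$, so the same covering-by-cubes and square-crown decomposition yields, $\P$-a.s.\ and for $N$ large, $\sum_{y\in V}\e^{-\|x-y\|}\|x-y\|\leq C\log N$ uniformly over $x\in B_{2RN}\cap V$, where $\mathrm{supp}\,G\subseteq B_R$. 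Hence $|L^N G(x/N)|\leq C' N\log N$ for every such $x$.

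Then I would restrict to the relevant sites. If $x/N$ lies at distance larger than $1$ from $\mathrm{supp}\,G$, then $G(x/N)=0$ and every $y$ with $G(y/N)\neq 0$ satisfies $\|x-y\|\geq N$, so $|L^N G(x/N)|$ is bounded by $N^2\|G\|_\infty$ times an exponentially small sum; the contribution of the squares of these terms, summed over all such $x$, is only $O(\e^{-cN})$ and is negligible. The remaining sites lie in $B_{(R+1)N}$, whose cardinality is $O(N^n)$ by the law of large numbers for the Poisson point process (or by item (ii) in the proof of Lemma \ref{tyran}, at the cost of an extra $\log N$). Combining,
$$
\|L^N G\|_{L^2(\mu_N)}^2 \leq N^{-n}\big(O(N^n)\cdot (C'N\log N)^2 + O(\e^{-cN})\big) = O\big(N^2(\log N)^2\big),
$$
so that $N^{-n}\|L^N G\|_{L^2(\mu_N)} = O(N^{1-n}\log N)$, which tends to $0$ precisely when $n\geq 2$.

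The main obstacle is the uniform, $\P$-almost sure control of the weighted rate sum $\sum_{y}\e^{-\|x-y\|}\|x-y\|$ simultaneously over the $O(N^n)$ relevant sites: one must check that the logarithmic bound of Lemma \ref{tyran} survives the extra polynomial weight and holds for all such $x$ at once. This is handled by the very same Borel--Cantelli argument as in Lemma \ref{tyran}, now applied with the additional factor $\|x-y\|$; everything else is a routine combination of the Lipschitz estimate and the point count. I emphasize that the hypothesis $n\geq 2$ enters only in the final step, where the gain $N^{1-n}$ coming from the normalization $N^{-n}$ must beat the single surviving power of $N$.
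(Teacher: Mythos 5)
Your proof is correct, but it follows a genuinely different route from the paper's. The paper argues in an \emph{annealed} way: it estimates $\E\big[\|L^N G\|_{L^2(\mu_N)}^2\big]$ using the Slivnyak--Mecke theorem, splitting the sum into a bulk term (sites in $NS_G$ with jumps shorter than $R=\log N^n$, handled by Taylor expansion), a long-jump term, and a term for sites outside $NS_G$ (handled by boundary-layer integrals); this yields $\E\big[\|L^N G\|_{L^2(\mu_N)}^2\big]\leq cN^n$, and then Markov's inequality gives $\P\big(N^{-n}\|L^N G\|_{L^2(\mu_N)}>\varepsilon\big)\leq c\,\varepsilon^{-2}N^{-n}$, so that the hypothesis $n\geq 2$ enters through the summability in $N$ needed for Borel--Cantelli. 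You instead argue \emph{quenched}, for a fixed typical environment: the Lipschitz estimate converts the factor $N^2$ into $N$ times the weighted rate sum $\sum_y r(x,y)\|x-y\|$, which you control by $C\log N$ uniformly over the $O(N^n)$ relevant sites via a weighted variant of Lemma \ref{tyran}, while the far field is exponentially negligible; for you $n\geq 2$ enters only through the final exponent in $N^{1-n}\log N$. Your route is more elementary (no Mecke formula, only the Poisson concentration and Borel--Cantelli arguments already used to prove Lemma \ref{tyran}), it gives an explicit pathwise rate, and by bounding $|L^N G(x/N)|$ uniformly also at sites just outside $NS_G$ it treats the boundary region on the same footing as the bulk, thereby avoiding the paper's separate and rather delicate analysis of its term $(C)$. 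The annealed computation, in exchange, produces quantitative moment bounds under $\P$ and is the type of argument that transfers to point processes whose correlation functions one can compute, without requiring uniform sup-norm control of the rates. Two details you should spell out to make your argument airtight: the weighted analogue of Lemma \ref{tyran} must be stated over $x\in B_{(R+1)N}\cap V$ rather than $B_N\cap V$ (apply the lemma at scale $(R+1)N$; the logarithm changes only by a constant), and the $O(\e^{-cN})$ far-field bound requires the almost sure shell counts of item (i) in the proof of Lemma \ref{tyran} to hold simultaneously for all shells, which is exactly what that Borel--Cantelli argument provides.
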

\begin{proof}
	First of all we bound the second moment of $\|L^N G\|_{L^2(\mu_N)} $. Call $S_G$ the support of $G$ and indicate with $NS_G$ the support blown by a factor $N$.
	We have
	\begin{align}
	\E\big[\|L^N G\|_{L^2(\mu_N)} ^2\big]
		&=\E\Big[ N^{-n}\sum_{x\in V} \Big(\sum_{y\in V}N^2r(x,y)\big(G(y/N)-G(x/N)\big)\Big)^2 \Big]\\
		&\leq 2N^{4-n}\big( (A)+(B)+(C)\big),\label{jeremy}
	\end{align}
	where
	\begin{align*}
	(A)
		&=\E\Big[ \sum_{x\in NS_G\cap V} \Big(\sum_{y\in B_{R}(x)\cap V}r(x,y)\big(G(y/N)-G(x/N)\big)\Big)^2 \Big]
		\\
	(B)
		&=\E\Big[ \sum_{x\in NS_G\cap V} \Big(\sum_{y\in B_{R}^c(x)\cap V}r(x,y)\big(G(y/N)-G(x/N)\big)\Big)^2\Big]
		\\
	(C)
		&=\E\Big[ \sum_{x\in (NS_G)^c\cap V} \Big(\sum_{y\in NS_G\cap V}r(x,y)G(y/N)\Big)^2\Big],
	\end{align*}
	where $B_R(x)$ is a ball around $x$ of radius $R=\log N^n$. We proceed by estimating separately the three parts. 
	We can easily deal with part $(B)$ thanks to  Slivnyak-Mecke theorem (see \cite[Theorem 13.3]{MW03} or \cite[Chapter 13]{DVJ08} for more general versions of the theorem), which yields
	\begin{align}
	(B)
		&\leq \|G\|_{\infty}^2\int_{x\in NS_G}\int_{y\notin B_R(x)}\Big(r(x,y)^2+r(x,y)\int_{z\notin B_R(x)}r(x,z)\,{\rm d}z\Big)\,{\rm d}y \,{\rm d}x 
		\leq c N^n \e^{-R}\label{penny}\,,
	\end{align}
	where the factor $N^n$ comes from the size of $NS_G$ and the factor $\e^{-R}$ comes from the internal integrals.
	
	 Developing the square and using again  Slivnyak-Mecke theorem, term $(C)$ becomes
	\begin{align}
	(C)
	=&\int_{x\notin NS_G}\int_{y\in NS_G} r(x,y)^2G(y/N)^2\,{\rm d}y\,{\rm d}x\nonumber\\
	&+\int_{x\notin NS_G}\int_{y\in NS_G} \int_{z\in NS_G}r(x,y)r(x,z)G(y/N)G(z/N)\,{\rm d}y\,{\rm d}z\,{\rm d}x.\label{zodiac}
	\end{align}
	Since $G\in\mathcal C^\infty_c(\R^n)$, $G$ must be Lipshitz with  Lipshitz constant, say, $K>0$. Call $d(x,A)$  the distance between $x\in\R^n$ and the border of the set $A\subset\R^n$. Noticing that $\|x-y\|\geq d(x,NS_G)+d(y,NS_G)$ if $x\not\in NS_G$ and $y\in S_G$, we see that the first double integral on the r.h.s.~of \eqref{zodiac} is smaller than
	\begin{align*}
	\int_{x\notin NS_G}\int_{y\in NS_G} \e^{-2(d(x,NS_G)+d(y,NS_G))}&\Big(K\frac{d(y,NS_G)}{N}\Big)^2\,{\rm d}y\,{\rm d}x\\
		&\leq c_1N^{-2}\int_{x\notin NS_G}\e^{-2d(x,NS_G)}N^{n-1}\,{\rm d}x
		\leq c_2 N^{d-3}
	\end{align*}
	with $c_1,c_2>0$ constants that depend on $G$.	Regarding the triple integral on the r.h.s.~of \eqref{zodiac} we can do something similar and bound it by
	\begin{align*}
	\int_{x\notin NS_G}\e^{-2d(x,NS_G)}\Big(\int_{y\in NS_G} \e^{-2d(y,NS_G)}   \Big(K\frac{d(y,NS_G)}{N}\Big)  \,{\rm d}y\Big)^2\,{\rm d}x
	\leq cN^{2n-4},
	\end{align*}
	with $c>0$ a constant depending on $G$. Plugging these two last bounds back into \eqref{zodiac} we got
	\begin{align}\label{lane}
	(C)\leq cN^{2n-4}.
	\end{align}
	
	Finally we turn our attention to 
	$(A)$. We use once more Slivnyak-Mecke theorem and a first order Taylor approximation and obtain 
	\begin{align}
	(A)
		&=\int_{x\in NS_G} \E\Big[\Big(\sum_{y\in B_R(x)\cap V} r(x,y)\Big(\sum_{i=1,\dots,n}\frac{y_i-x_i}{N} \tfrac{\rm d}{{\rm d}x_j}G(x/N)+O\big(\|x-y\|^2/N^2\big)\Big)\Big)^2\Big]\nonumber\\
		&\leq c_1N^{-2}\|\nabla G\|_\infty ^2\int_{x\in NS_G} \E\Big[\Big(\sum_{y\in B_R(x)\cap V} r(x,y)\big(\|x-y\|+\|x-y\|^2/N\big)\Big)^2\Big]\,{\rm d}x\nonumber\\
		&\leq c_2N^{-2}\|\nabla G\|_\infty ^2\int_{x\in NS_G}U(x,R) \,{\rm d}x\nonumber\\
		&\leq c_3 N^{n-2}\label{zubriscola}
	\end{align}
	where we have used the fact that 
	\begin{align*}
	U(x,R)
		:=\int_{y\in B_R(x)}\Big( r(x,y)^2\|x-y\|^2+  \int_{z\in B_R(x)} r(x,y)r(x,z)\|x-y\|\,\|x-z\|\,{\rm d}z\Big){\rm d}y
		\leq c
	\end{align*}
	for some $c>0$.

	We finally put \eqref{penny}, \eqref{lane} and \eqref{zubriscola}  back into \eqref{jeremy} to obtain that
	\begin{align*}
		\E[\|L^N G\|_{L^2(\mu_N)} ^2]\leq  cN^n\,.
	\end{align*}
	By Markov inequality we obtain now that, for all $\varepsilon>0$,
	\begin{align*}
	\P(N^{-n}\|L^N G\|_{L^2(\mu_N)}>\varepsilon)\leq c\,\varepsilon^{-2}N^{-n}\,,
	\end{align*}
	which tells us that the sequence $N^{-n}\|L^N G\|_{L^2(\mu_N)}$ converges almost completely to $0$ for $n\geq 2$ and hence almost surely. 
\end{proof}

\section{A non-conservative Kipnis--Varadhan estimate}\label{sectionKV}


Recall the Domination $\&$ Convergence Assumption and in particular \eqref{domiziano2}. For constants $\rho>0$ and $M\in\N_0$,
call $\nu_{M,\rho}(\cdot)=\nu_{M,\rho}(\omega,\cdot)$ the measure that dominates all initial conditions. That is, $\nu_{M,\rho}$ is the product measure on $\N^V$ such that its restriction on each site $x\in V$ is a Poisson random variable of parameter $\rho$ plus the constant $M\in\N$:
 	\begin{align}\label{domiziano}
\nu_{M,\rho}\Big(\prod_{x\in A}[M+n_x,\infty)\Big)
= \prod_{x\in A}\Big(\sum_{j=n_x}^\infty \frac{\rho^j\e^{-\rho}}{j!}\Big)\qquad \forall A\subset V,\, (n_x)_{x\in A}\in\N^{|A|} \,.
\end{align}

\begin{lem}\label{KVconservative}
	Consider an initial condition given by $\nu_{0,\rho}$, the product of Poisson random variables of parameter $\rho>0$. 
	Under $P_{\nu_{0,\rho}}^\omega$, let each particle perform an independent random walk on $V/N$ with generator $L^N$ (without births nor deaths) and call $(Y_t)_{t\geq 0}$ the evolution of their configuration, so that $Y_t(x) $ is the number of particles in $x\in V$ at time $t$. 
	Let $H$ be a nonnegative function on $V/N$ belonging to $L^1(\mu_N)\cap L^2(\mu_N)$ and such that $L^NH$ belongs to $L^2(\mu_N)$. 
	Then for any $T,A>0$ it holds
	\begin{align}
	P_{\nu_{0,\rho}}^\omega\Big(\sup_{0\leq t\leq T} \frac{1}{N^n}\sum_{x\in V}Y_t(\tfrac xN) H(\tfrac xN)>A\Big)
		&\leq  c(\rho,T) A^{-1} \vertiii{H}_N  \label{zapotec}
\\
	P_{\nu_{0,\rho}}^\omega\Big(\sup_{0\leq t\leq T} \frac{1}{N^n}\sum_{x\in V}Y_t(\tfrac xN)^2 H(\tfrac xN)>A\Big)&
		\leq  \tilde c(\rho,T) A^{-1}\sqrt{\vertiii{H}_N^2 +N^{2-2n}\sum_{x\in V}r(x)H(\tfrac xN)}\label{gambadilegno}
	\end{align} 
	with $c(\rho,T)=(\rho^2+\rho+T\rho)^{ 1/2}$, $\tilde c(\rho,T)^2$ a polynomial in $\rho$ and $T$ and 
\begin{align}\label{babu}
	\vertiii{H}_N^2
		:=\|H\|_{L^1(\mu_N)}^2+ N^{-n} \| H \|_{L^2(\mu_N)} \, \| L^NH  \|_{L^2(\mu_N)}\,.
\end{align}

\end{lem}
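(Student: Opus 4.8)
The plan rests on one structural fact. Since $r(x,y)=r(y,x)$, the counting measure on $V/N$ is reversible for the single--particle walk generated by $L^N$, and therefore the product measure $\nu_{0,\rho}$ is reversible (hence invariant) for the system of independent walks $(Y_t)_{t\geq0}$. In particular $Y_t$ has law $\nu_{0,\rho}$ for every fixed $t$, so its one--time marginals are i.i.d.\ Poisson$(\rho)$ across sites and all their moments are explicit. This reversibility is exactly what makes a Kipnis--Varadhan bound available, and it is available precisely because births and deaths are switched off. Throughout, $E^\omega_{\nu_{0,\rho}}$ denotes expectation under $P^\omega_{\nu_{0,\rho}}$, and I write $\mathcal L^N_0$ for the (reversible) generator of $(Y_t)$, i.e.\ \eqref{generator} with $b=d=0$ and sped up, which acts on the linear observable $\Phi(\eta):=(\eta,H)_{\mu_N}$ by $\mathcal L^N_0\Phi(\eta)=(\eta,L^NH)_{\mu_N}$.

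For \eqref{zapotec} I would set $S_t:=(Y_t,H)_{\mu_N}=N^{-n}\sum_{x\in V}Y_t(x/N)H(x/N)$ and use the Dynkin decomposition
\[
S_t=S_0+\int_0^t (Y_s,L^NH)_{\mu_N}\,{\rm d}s+M_t,
\]
with $M_t$ an $L^2$ martingale; note that $(Y_s,L^NH)_{\mu_N}$ has mean zero under $\nu_{0,\rho}$ since $\sum_x L^NH(x/N)=0$ by the symmetry of $r$. Bounding $\sup_{t\leq T}S_t$ by $|S_0|+\sup_{t\leq T}\big|\int_0^t(Y_s,L^NH)_{\mu_N}{\rm d}s\big|+\sup_{t\leq T}|M_t|$ and using $E^\omega_{\nu_{0,\rho}}[\sup_t S_t]\leq (E^\omega_{\nu_{0,\rho}}[(\sup_tS_t)^2])^{1/2}$, I would treat the three pieces as follows. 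A direct Poisson computation gives $E^\omega_{\nu_{0,\rho}}[S_0^2]=\rho^2\|H\|_{\lpnu}^2+\rho\,N^{-n}\|H\|_{\Lpnu}^2$, and here the hypothesis $H\geq0$ enters through $N^{-2n}\sum_x H(x/N)^2\leq N^{-2n}\big(\sum_x H(x/N)\big)^2$, i.e.\ $N^{-n}\|H\|_{\Lpnu}^2\leq\|H\|_{\lpnu}^2\leq\vertiii{H}_N^2$, which converts the variance of $S_0$ into the coefficient $\rho$. Doob's $L^2$ inequality gives $E^\omega_{\nu_{0,\rho}}[\sup_t M_t^2]\leq 4E^\omega_{\nu_{0,\rho}}[\langle M\rangle_T]$, and the carr\'e du champ of $\Phi$ yields $E^\omega_{\nu_{0,\rho}}[\langle M\rangle_T]=2\rho T\,N^{-n}(H,-L^NH)_{\mu_N}\leq 2\rho T\,N^{-n}\|H\|_{\Lpnu}\|L^NH\|_{\Lpnu}$. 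Finally, the additive functional $\int_0^t(Y_s,L^NH)_{\mu_N}{\rm d}s$ is controlled by the Kipnis--Varadhan estimate for the reversible process $(Y_t)$: its supremum is bounded by $CT$ times the $H_{-1}$--norm of $\eta\mapsto(\eta,L^NH)_{\mu_N}$, and a variational (resolvent) computation identifies this norm as $\rho\,N^{-n}(H,-L^NH)_{\mu_N}$, again at most $\rho\,N^{-n}\|H\|_{\Lpnu}\|L^NH\|_{\Lpnu}$. Summing gives $E^\omega_{\nu_{0,\rho}}[(\sup_tS_t)^2]\leq(\rho^2+\rho+T\rho)\vertiii{H}_N^2$, and Markov's inequality yields \eqref{zapotec}.

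For the quadratic estimate \eqref{gambadilegno} I would run the identical scheme with $\Phi$ replaced by $g(\eta)=N^{-n}\sum_x\eta(x)^2H(x/N)$, so that the event in \eqref{gambadilegno} concerns $\sup_t g(Y_t)$. The mean $E^\omega_{\nu_{0,\rho}}[g(Y_t)]=(\rho+\rho^2)\|H\|_{\lpnu}$ and all fluctuation terms are again explicit because the Poisson moments up to fourth order are available. The genuinely new feature is that the Dirichlet form $(g,-\mathcal L^N_0 g)_{\nu_{0,\rho}}$ and the predictable quadratic variation of the associated martingale, evaluated with the $\pm1$ shifts of the occupation variables $\eta(x)^2$, produce, besides the $(H,-L^NH)$--contribution already met (which collapses into $\vertiii{H}_N^2$), a diagonal term proportional to $N^{2-2n}\sum_x r(x)H(x/N)$ arising from the self--interaction at a single site, where the total rate $r(x)$ appears. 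Grouping the degree--two terms into $\vertiii{H}_N^2$ and the degree--one term into $N^{2-2n}\sum_x r(x)H(x/N)$, and passing from $E^\omega_{\nu_{0,\rho}}[\sup_t(\cdot)^2]$ to $E^\omega_{\nu_{0,\rho}}[\sup_t(\cdot)]$ by Cauchy--Schwarz, is what generates the square root in \eqref{gambadilegno}; Markov's inequality then concludes.

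The hard part will be the Kipnis--Varadhan step itself. The classical estimate is formulated for reversible processes that are effectively regular (bounded rates or a spectral gap), whereas here the single--site total rate $r(x)$ is unbounded over $V$ (it grows logarithmically in the box size, cf.\ Lemma \ref{tyran}) and the volume is infinite. I therefore expect the delicate points to be: checking that $\eta\mapsto(\eta,L^NH)_{\mu_N}$ belongs to $L^2(\nu_{0,\rho})$ with finite $H_{-1}$--norm (this is exactly where the hypothesis $L^NH\in\Lpnu$ is used), that the Dynkin martingale is genuinely square--integrable despite the unbounded rates, and that the supremum bound survives the infinite--volume approximation. Finally, the assumption $H\geq0$ is structural rather than cosmetic: it is what lets the $L^2$--mass of $H$ be reabsorbed into $\|H\|_{\lpnu}^2$ in the initial term, and it keeps the quadratic--functional computation behind \eqref{gambadilegno} sign--definite.
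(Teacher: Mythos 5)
Your proposal is correct in substance and rests on exactly the same two pillars as the paper's proof: reversibility of the birth/death--free dynamics with respect to the product Poisson measure $\nu_{0,\rho}$, and the explicit computation, under $\nu_{0,\rho}$, of the $L^2$ norm and of the Dirichlet form of the observable $g(\eta)=N^{-n}\sum_x\eta(x)^kH(x/N)$. The difference is in how the Kipnis--Varadhan step is packaged. The paper applies the maximal inequality of \cite[Theorem 11.1, Appendix 1]{KL98} in one stroke, namely $P^\omega_{\nu_{0,\rho}}\big(\sup_{t\le T}g(Y_t)>A\big)\le \tfrac{\e}{A}\big(\langle g,g\rangle_{\nu_{0,\rho}}+T\langle g,-N^2\mathcal L_*g\rangle_{\nu_{0,\rho}}\big)^{1/2}$, after which only the two Poisson computations remain. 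You instead re-derive a bound of this type by hand: Dynkin decomposition of $g(Y_t)$ into initial value, additive functional and martingale; Doob for the martingale, whose bracket in stationarity equals $2T$ times the Dirichlet form; and the Kipnis--Varadhan $H_{-1}$ bound $E^\omega[\sup_{t\le T}|\int_0^tV(Y_s)\,{\rm d}s|^2]\le CT\|V\|_{-1}^2$ for the additive functional, together with the identification $\|\mathcal L\Phi\|_{-1}^2=\langle\Phi,-\mathcal L\Phi\rangle$. This is legitimate (it is essentially the proof of the inequality the paper cites), and it has a small expository advantage: for $k=1$ the cancellation that the paper obtains by explicit computation ($c_1(\rho,1)+c_2(\rho,1)=0$) is automatic in your route, since the Dirichlet form of a linear observable only sees the differences $H(y/N)-H(x/N)$. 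What it costs is constants: the triangle inequality, Doob's factor $4$ and the absolute constant in the additive-functional bound mean you get $(\rho^2+\rho+T\rho)^{1/2}\vertiii{H}_N$ only up to a universal factor, not the literal $c(\rho,T)$ of the statement. (The paper's stated constant is itself loose --- the factor $\e$ from the cited inequality is dropped --- and nothing downstream uses the precise value, so this is cosmetic.)

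Two smaller points. First, you write that the supremum of the additive functional is bounded by ``$CT$ times the $H_{-1}$-norm''; it is its \emph{second moment} that is bounded by $CT$ times the \emph{squared} $H_{-1}$ norm, which is in fact how you then use it, so this is a slip of wording rather than of logic. Second, for $k=2$ the diagonal term produced by the Dirichlet-form computation comes out proportional to $N^{2-2n}\sum_x r(x)H(x/N)^2$ --- this is what the paper's own proof obtains, with coefficient $|c_1(\rho,2)+c_2(\rho,2)|=4\rho^2$ --- and not to $N^{2-2n}\sum_x r(x)H(x/N)$ as written in \eqref{gambadilegno} and in your text; the first power in the displayed statement appears to be a typo of the paper which your blind write-up has inherited. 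Finally, the technical caveats you list at the end (unbounded rates, infinite volume, square integrability of the Dynkin martingale) apply equally to the paper's one-line application of the cited inequality, and the paper does not address them either; they are not a gap specific to your route.
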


\begin{rem}
This sort of inequalities are typically carried out for all powers of the number of particles $Y_t^k$ at once, at the only cost of a constant on the r.h.s.~varying with $k$, see for example \cite[Lemma 3.2]{F10}. In our setting, though, we cannot hope for such a ``clean'' result for all values of $k$, due to the irregularity of the support $V=V(\omega)$. In the rest of the paper we  only need  $k=1$, but we bound here also the case $k=2$ for future interest.
\end{rem}

\begin{proof}[Proof of Lemma \ref{KVconservative}]
	The particle dynamics without births or deaths is reversible with respect to $\nu_{0,\rho}$. Hence, by Kipnis-Varadhan  inequality (\cite{KV86}, see also \cite[Theorem 11.1 in Appendix 1]{KL98}) we know that, for $k\geq 1$,
	\begin{align}\label{donvar}
	P_{\nu_{0,\rho}}^\omega\Big(\sup_{0\leq t\leq T} \frac{1}{N^n}\sum_{x\in V}Y_t(\tfrac xN)^k  H(\tfrac xN)>A\Big)
		\leq \frac{\rm e}{A} \sqrt{\langle g,g\rangle_{\nu_{0,\rho}}+ T\,\langle g,-{N^2\mathcal L_*}g\rangle_{\nu_{0,\rho}}}
	\end{align}
	where ${ N^2\mathcal L_*}$  is the generator of $(Y_t)_{t\geq 0}$ and $g:\N^{V}\to\R$ is given by 
	$$
	g(\eta):= \frac{1}{N^n}\sum_{x\in V} g_x(\eta) H(x/N) ,\qquad g_x(\eta):=\eta(x)^k.
	$$
	Notice that $\mathcal L_*$ corresponds to $\mathcal L$ appearing in \eqref{generator} with $b=d=0$.
	Now we calculate
	\begin{align}\label{inner}
	\langle g,g\rangle_{\nu_{0,\rho}}
		&=\frac{1}{N^{2n}}\sum_{x,y\in V} H(x/N)H(y/N)\nu_{0,\rho}[g_xg_y] \leq c_0(\rho,k)\|H\|_{L^1(\mu_N)}^2
	\end{align}
	where $c_0(\rho,k)=\E[\xi_\rho^{2k}]$ indicates the $2k$-th moment of $\xi_\rho\sim$Poisson($\rho$). Moving to the second summand under the root in \eqref{donvar}, we write
	\begin{align}\label{dirichlet}
	\langle g,-N^2\mathcal L_*g\rangle_{\nu_{0,\rho}}
		&=-N^{2-2n}\sum_{x,y\in V} H(x/N)H(y/N)\nu_{0,\rho}[g_x \, \mathcal L_*g_y]\,.
	\end{align}
Besides, we have
	\begin{align*}
	\mathcal L_*g_y (\eta)
		=\eta(y)r(y)\big((\eta(y)-1)^k-\eta(y)^k\big)
		+\sum_{z\in V}\eta(z)r(z,y)\big((\eta(y)+1)^k-\eta(y)^k\big).
	\end{align*}
For $x=y$, we get
	\begin{align*}
	\nu_{0,\rho}\big[g_x\, \mathcal L_*g_x\big]
		&=c_1(\rho,k)r(x)
	\end{align*}
	with $$c_1(\rho,k)=\E\big[\xi_\rho^{k+1}((\xi_\rho-1)^k-\xi_\rho^k)\big]+\E[\xi_\rho]\E\big[\xi_\rho^{k}((\xi_\rho+1)^k-\xi_\rho^k)\big]\leq 0\,.$$ 
	For $x\neq y$, using that $\mathcal L_*g_y(\eta)-\eta(x)r(x,y)\big((\eta(y)+1)^k-\eta(y)^k)$ is independent of $\eta(x)$ under $\nu_{0,\rho}$
	and that $\nu_{0,\rho}[\mathcal L_*f]=0$ for all $f$,
	\begin{align*}
	\nu_{0,\rho}\big[g_x \, \mathcal L_* g_y\big]
		&=\nu_{0,\rho}\Big[\eta^k(x)\cdot\eta(x)r(x,y)\big((\eta(y)+1)^k-\eta(y)^k\big)\Big]\\
		&\quad+ \nu_{0,\rho}[\eta^k]\nu_{0,\rho}\Big[\mathcal L_*(g_y)-\eta(x)r(x,y)\big((\eta(y)+1)^k-\eta(y)^k\big)\Big]\\
		&=c_2(\rho,k) r(x,y)\,.
	\end{align*}
	with $$c_2(\rho,k)=\big(\E[\xi_\rho^{k+1}]-\E[\xi_\rho^k]\E[\xi_\rho]\big) \E\big[(\xi_\rho+1)^k-\xi_\rho^k\big]\,.$$

	When $k=1$ we magically have $c_2(\rho,1)=-c_1(\rho,1)=\rho$, so that
	\begin{align*}
	\langle g,-\mathcal L_*g\rangle_{\nu_{0,\rho}}
		&=\rho N^{2-2n}\Big(\sum_{x\in V} r(x)H(x/N)^2
		-\sum_{x\ne y\in V} r(x,y) H(x/N)H(y/N)\Big)\\
	&=\rho N^{-2n}\sum_{x \in V} H(x/N) 			\Big(-N^2\sum_{y\ne x} r(x,y)\big(H(y/N)-H(x/N)\big)\Big)\\
	&=\rho N^{-n} \langle H, L^NH\rangle_{\mu_N}\\
	&\leq \rho N^{-n} \| H \|_{L^2(\mu_N)} \, \| L^NH  \|_{L^2(\mu_N)}\,.
	\end{align*}
	Putting this and \eqref{inner}  back into \eqref{donvar} together with the fact that $c_0(\rho,1)=\E[\xi_\rho^2]=\rho^2+\rho$ gives \eqref{zapotec}.
	
	When $k=2$,  explicit calculation yield $c_1(\rho,2)=-\rho(4\rho^2+8\rho+1)$ and  $c_2(\rho,2)=4\rho^3+4\rho^2+\rho$. They do not cancel out as in the case $k=1$ and as a consequence we have another term appearing from the term $\langle g,-\mathcal L_*g\rangle_{\nu_{0,\rho}}$, that is
	\begin{align*}
	\langle g,-\mathcal L_*g\rangle_{\nu_{0,\rho}}
		&\leq c_2(\rho,k) N^{-n} \| H \|_{L^2(\mu_N)} \, \| L^NH  \|_{L^2(\mu_N)} + \big|c_1(\rho,k)+c_2(\rho,k)\big|\,R
	\end{align*}
	with
	$$
	R=N^{2-2n}\sum_{x\in V} r(x)H(x/N)^2.
	$$
	Putting the pieces together as before we obtain \eqref{gambadilegno}. 
\end{proof}

Let us turn to the non-conservative case.
\begin{lem}\label{stima}
	Let $\eta_0^N$ be an initial distribution of particles whose law is dominated by $\nu_{M,\rho}$ for some $M\in\N_0$ and $\rho\geq 0$ in the sense of \eqref{domiziano2}.
	Let $H$ be a nonnegative function on $V/N$ belonging to $L^1(\mu_N)$ and $L^2(\mu_N)$.  Then there exist a constant $c_1=c(M,\rho,T)>0$ and an absolute constant $c_2>0$  such that
	\begin{align}\label{allesliebe}
	P^\omega\Big(\sup_{0\leq t\leq T} \frac{1}{N^n}\sum_{x\in V}\eta^{N}_t(x)  H(x/N)>A\Big)
		\leq A^{-1} c_1{\rm e}^{ c_2bT} \vertiii{H}_N
	\end{align}
	for all $A>0$, where $\vertiii{H}_N$ is defined in \eqref{babu}.
\end{lem}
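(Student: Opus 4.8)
The plan is to reduce the branching (non-reversible) estimate to the conservative one of Lemma \ref{KVconservative} by decomposing the genealogy into lineages and dominating the resulting occupation by a system of independent walkers to which the reversible bound \eqref{zapotec} applies. First I would perform two soft reductions. Since $H\ge 0$, deaths can only remove particles, so a monotone coupling lets me assume $d=0$, as in the construction of Section \ref{measurevaluedprocess}. Moreover the dynamics consists of non-interacting particles, so the occupation field is additive and monotone in the initial configuration; combined with the tail bound \eqref{domiziano2} this allows me to replace $\eta_0^N$ by its dominating product law $\nu_{M,\rho}$. Writing $\nu_{M,\rho}$ as the independent superposition of the Poisson field $\nu_{0,\rho}$ and the deterministic shift $M\mathbf 1$, the Poisson component is the one tailored to Lemma \ref{KVconservative}, while the shift $M$ only inflates the prefactor and so is absorbed into $c_1=c(M,\rho,T)$. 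Thus it suffices to treat the branching walk started from $\nu_{0,\rho}$.

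The heart of the argument is the genealogical decomposition. Using the Ulam--Harris--Neveu labels of Section \ref{measurevaluedprocess}, I would organise the descendants of each initial particle into \emph{branches}, a branch being a maximal lineage that follows the ``first child'' at every birth and continues unchanged through jumps; every alive particle at time $t$ is the tip of exactly one branch alive at $t$, so that
\begin{align*}
\frac{1}{N^n}\sum_{x\in V}\eta_t^N(x)\,H(x/N)=\frac{1}{N^n}\sum_{\beta\ \text{alive at}\ t}H\big(X_t^{\beta}/N\big)\,.
\end{align*}
Because motion and branching are independent and the walk is time-homogeneous Markovian, a branch born at a space--time point $(s,x_s)$ evolves on $[s,T]$ as a fresh walk from $x_s$, and marginally its tip at time $t$ has the same law as the tip of its ancestral walk at time $t$ (the many-to-one principle). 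I would exploit this through a \emph{percolation on the initial particles}: each initial particle is replaced by a random number $K$ of independent walkers launched from the ancestor's site, where $K$ is the number of branches created up to time $T$ along a single genealogy — a Yule quantity with $\E[K]\le \e^{bT}$. Combined with the Poisson initial law, this produces a reversible system of independent walkers whose initial intensity is the original $\rho$ thickened by the branching multiplicity, and whose Poisson-cluster structure keeps the reference measure a Poisson product; Lemma \ref{KVconservative} then applies to the dominating system and returns a bound of the form $A^{-1}c(\tilde\rho,T)\vertiii{H}_N$. The thickening of $\rho$ by the expected branch count is exactly what turns $c(\tilde\rho,T)$ into the factor $c_1\e^{c_2bT}$ of \eqref{allesliebe}.

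The step I expect to be the main obstacle is making this domination rigorous for the \emph{supremum} over $t\in[0,T]$ rather than at a fixed time. The branches are born after time $0$ and at the moving positions of their parents, so they are neither mutually independent nor synchronised with walkers started at time $0$; the naive bound $\sup_t\sum_\beta f_\beta(t)\le\sum_\beta\sup_t f_\beta(t)$ must be avoided, since it destroys the $L^1$--structure encoded in $\vertiii{H}_N$ (a single branch can reach $\|H\|_\infty$, whereas $\vertiii{H}_N$ is of order $\|H\|_{L^1(\mu_N)}$). The resolution is to keep all branches of a given ancestor together and transfer the comparison to the two quadratic forms $\langle g,g\rangle_{\nu_{0,\rho}}$ and $\langle g,-\mathcal L_* g\rangle_{\nu_{0,\rho}}$ that drive \eqref{zapotec}: one checks that, after the percolation coupling, these forms for the branching occupation are controlled by those of the dominating reversible system up to the multiplicative branching factor. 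Once this comparison of Dirichlet-type forms is in place, substituting it into the reversible maximal inequality of Lemma \ref{KVconservative} and recombining the independent, Poisson-indexed ancestral clusters yields \eqref{allesliebe}, with $c_2$ an absolute constant coming from the Yule growth $\E[K]\le\e^{bT}$ and $c_1$ depending on $M$, $\rho$ and $T$.
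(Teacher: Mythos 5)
Your proposal shares the paper's high-level strategy (genealogical decomposition, domination by a conservative system, then Lemma \ref{KVconservative}), but the decomposition you choose leads to a step that fails. You group \emph{all branches of a given ancestor together} and propose to dominate them by a random number $K$ of independent walkers launched from the ancestor's site at time $0$. This dominating system is not one to which Lemma \ref{KVconservative} applies, for two separate reasons. First, replacing each point of a Poisson field by $K$ coincident walkers produces a compound-Poisson (cluster) field, not a product of Poisson laws: your claim that the ``Poisson-cluster structure keeps the reference measure a Poisson product'' is false, and the independent-walk dynamics is reversible with respect to $\nu_{0,\rho}$, not with respect to cluster fields. Second --- and you flag this yourself as the main obstacle --- the branches are born at later times, at the moving positions of their parents, and your proposed resolution (``transfer the comparison to the quadratic forms $\langle g,g\rangle_{\nu_{0,\rho}}$ and $\langle g,-\mathcal L_*g\rangle_{\nu_{0,\rho}}$'') is circular: those forms control the supremum in \eqref{zapotec} only because the conservative dynamics is a reversible Markov process started from its reversible measure. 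The branching occupation is neither reversible nor stationary, so there is no Kipnis--Varadhan inequality for it into which a Dirichlet-form comparison could be substituted --- such an inequality is precisely what Lemma \ref{stima} is trying to establish.

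The missing idea is the orthogonal grouping. The paper indexes the decomposition by the tree-word $j\in\mathcal J=\cup_k\{1,2\}^k$ and, for fixed $j$, keeps \emph{all ancestors together}, so the component $\eta_t^{N,\N\times\{j\}}$ has at most one particle per ancestor and retains the extensive $L^1$-structure. The domination then retains ancestor $\ell$ if and only if particle $(\ell,j)$ is born before $T$ --- an independent thinning with probability $p_j=\P(\mathrm{Poisson}(bT)\geq|j|)$, so the dominating system starts from the Poisson product $\nu_{0,\rho p_j}$ --- and the retained walker follows the concatenation of the trajectories of $(\ell,j)$'s ancestors, then $(\ell,j)$ itself, then any lineage of its descendants. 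This concatenation is a genuine random walk from time $0$, the domination $\eta_t^{N,\N\times\{j\}}\leq Y_t^{N,j}$ holds pathwise (hence for the supremum), and Lemma \ref{KVconservative} applies to each component. A union bound with weights $4^{-|j|}$ together with $c(\rho p_j,T)\leq \bar c(\rho,T)\,p_j^{1/2}$ reduces everything to $\sum_{j\in\mathcal J}4^{|j|}p_j^{1/2}\leq c\,\e^{c_2 bT}$, which follows from the Poisson tail bound. A second, smaller gap: the deterministic part $M$ of $\nu_{M,\rho}$ cannot simply be ``absorbed into the prefactor,'' since Lemma \ref{KVconservative} requires Poisson initial data; the paper dominates each deterministic particle by $X(x)+Y(x)$ with $X,Y\sim\mathrm{Poisson}(\log 2)$ coupled so that $X+Y\geq1$ almost surely, and then applies a union bound to the two Poisson subsystems.
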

\begin{proof}
The probability appearing in \eqref{allesliebe} can be clearly upper bounded by the probability of the same event starting with a configuration sampled with $\nu_{M,\rho}$.
As a first step, we would like to further bound the initial condition in order to have a pure product of a Poisson number of particles per site, which will allow us to use the result of Lemma \ref{KVconservative} in the following. To this end we first focus on the case $M=1$, $\rho=0$. In this case we have that the initial condition $\nu_{1,0}$ is given by a single particle on each site of $V $. Take two random variables $X,Y\sim \text{Poisson}(\log 2)$ such that
\begin{align*}
P(X+Y\geq 1)=1\,.
\end{align*}
We can dominate $\nu_{1,0}$ by the random initial condition $\bar\nu$ given by the following: the number of particles on site $x\in V$ is given by $X(x)+Y(x)$, with $X(x)\sim X$ and $Y(x)\sim Y$ and $(X(x),Y(x))_{x\in V}$ independent for different $x\in V$. Now we notice that if we want $N^{-n}\sum_{x\in V}\eta^N_t(x) H(\tfrac xN)$ to be greater than $A$, it must be that $N^{-n}\sum_{x\in V}\eta^{N,X}_t(x) H(\tfrac xN)$ is larger than $A/2$, with $\eta_t^{N,X}$ are the particles descending from initial particles ``of type X'', or $N^{-n}\sum_{x\in V}\eta^{N,Y}_t(x) H(\tfrac xN)$ has to be greater than $A/2$. So with a union bound we get
\begin{align*}
P_{\nu_{1,0}}^\omega\Big(\sup_{0\leq t\leq T} \frac{1}{N^n}\sum_{x\in V}\eta^N_t(x) H(\tfrac xN)>A\Big)
	\leq 2 	P^\omega_{\nu_{0,\log 2}}\Big(\sup_{0\leq t\leq T} \frac{1}{N^n}\sum_{x\in V}\eta^N_t(x) H(\tfrac xN)>A/2\Big)\,.
\end{align*}
It is straightforward to generalize the previous argument to the case $M\geq 1$ and $\rho\geq 0$ which yields
\begin{align*}
P_{\nu_{M,\rho}}^\omega\Big(\sup_{0\leq t\leq T} \frac{1}{N^n}&\sum_{x\in V}\eta^N_t(x) H(\tfrac xN)>A\Big)\\
	&\leq (M+1)	P^{\omega}_{\nu_{0,\rho\vee \log 2}}\Big(\sup_{0\leq t\leq T} \frac{1}{N^n}\sum_{x\in V}\eta^N_t(x) H(\tfrac xN)>\tfrac{A}{M+1}\Big)\,.
\end{align*}
From this we see that, at the cost of a constant factor depending on $M$, we can prove \eqref{allesliebe} with initial particle configuration $\nu_{0,\rho}$, where we have replaced the original $\rho$ with $\rho\vee \log 2$.

\smallskip

We use a new labelling notation for the particles, not to be confused with the one appearing in Section \ref{measurevaluedprocess}. The  individuals at time $0$ are labelled by $\N$. 
To label their descendants, we introduce the binary tree
$$
\mathcal J=\cup_{k\in\N_0} \{1,2\}^k.
$$
For $j=(j_1,\dots,j_k)\in\mathcal J$, $k\in\N_0$ and $n\in\N$, we write $(n,j)=(n,j_1,\dots,j_k)$. In particular, $(n,j)=(n,j_1,...,j_k)$
is an individual of generation $|j|=k$. 
When a particle $(n,j)\in \N\times \{1,2\}^k$ reproduces, it disappears and generates particles $(n,j_1,...,j_k,1)$ and $(n,j_1,...,j_k,2)$. For  a subset $A=I\times J $ with  $I\subset \N$ and $J\subset \mathcal J$, we write 
$(\eta_t^{N,A})$ for the process restricted to the subset of particles labelled by elements of $A$, that is, at time $t$ we look at $\eta_t^N$ and ignore all the particles with labels not belonging to $A$. Since 
$$
\eta_t^{N}
	=\sum_{j \in \mathcal J} \eta_t^{N,\N\times \{j\}}
$$
we have
$$
\Sigma_T^{N}
	:=\sup_{0\leq t\leq T} \frac{1}{N^n}\sum_{x\in V}\eta^{N}_t(x)  H(x/N)
	\leq \sum_{j \in \mathcal J} \Sigma_T^{N,j}
$$
where 
$$
\Sigma_T^{N,j}
	:=\sup_{0\leq t\leq T} \frac{1}{N^n}\sum_{x\in V}\eta^{N,\N\times\{j\}}_t(x)  H(x/N)\,.
$$
Using that $\sum_{j \in \mathcal J} 4^{-\vert j\vert} =\sum_{k\geq 0} 2^k4^{-k}= 2$, we can bound
\begin{align}\label{puffi}
P^\omega_{\nu_{0,\rho}}(\Sigma_T^{N}\geq A)
	\leq P^\omega_{\nu_{0,\rho}}\left(\cup_{j \in \mathcal J} \{\Sigma_T^{N,j}\geq  4^{-\vert j\vert} A/2 \}\right)
	\leq  \sum_{j \in \mathcal J} P^\omega_{\nu_{0,\rho}}\left(\Sigma_T^{N,j}\geq  4^{-\vert j\vert} A /2\right)\,.
\end{align}

The key point is now to see that the process $(\eta_t^{N,\N\times \{j\}})$ can be dominated by another process $(Y_{t}^{N,j})$,  obtained by a percolation procedure on the initial distribution of particles. More precisely, $Y_{0}^{N,j}$ is obtained from  $\eta_0^{N}$ as follows: for $\ell\in\N$, the particle with label $\ell$ in $\eta_0^N$ is kept in $Y_{0}^{N,j}$ only if particle $(\ell,j)$ is born before time $T$ for the process $(\eta_t^{N})$. Notice that this happens with probability
\begin{align}\label{lametta}
p_j
	=\P(\text{Poisson} (bT)\geq \vert j\vert)
	={\rm e}^{-bT}\sum_{k\geq \vert j\vert } \frac{(bT)^k}{k !}
\end{align}
since, along each lineage, birth events follow a Poisson process with intensity $b$. If present in $Y_{0}^{N,j}$, then, particle $\ell$ evolves in the process $(Y_{t}^{N,j})$ by following the trajectory of $(\ell,j)$ and its ancestors in $(\eta_0^N)$; once $(\ell,j)$ has disappeared in $(\eta_0^N)$, the particle continues to evolve following the trajectory of any lineage of descendants of $(\ell,j)$.
From this coupling, it is clear that, for all $t\in[0,T]$ and $j\in\mathcal J$,
\begin{align*}
\eta_t^{N,\N\times \{j\}}\leq Y_{t}^{N,j}
\end{align*}
and we have obtained 
\begin{align*}
P^\omega_{\nu_{0,\rho}}\left(\Sigma_T^{N,j}\geq  4^{-\vert j\vert} A /2\right)
	\leq P^\omega_{\nu_{0,\rho}}\Big(\sup_{0\leq t\leq T} \frac{1}{N^n}\sum_{x\in V}Y_t^{N,j}(x) H(x/N)>4^{-\vert j \vert} A/2\Big)\,.
\end{align*}
At this point, we can use Lemma \ref{KVconservative} for the process $(\tilde Y_{t}^{N,j})_{t\in[0,T]}$ on $V/N$, with $\tilde Y_{t}^{N,j}(x/N)=Y_{t}^{N,j}(x)$ for all $x\in V$ and $t\in[0,T]$, since for this process the present particles just perform independent random walks on $V/N$ generated by $L^N$. We notice furthermore that the initial particles of process $(Y_{t}^{N,j})$ have distribution $\nu_{0,\rho p_j}$, cfr.~\eqref{lametta}.
Hence
\begin{align*}
P^\omega_{\nu_{0,\rho}}\Big(\sup_{0\leq t\leq T} \frac{1}{N^n}\sum_{x\in V_N}Y_t^{N,j}(x) H(x/N)>4^{-\vert j \vert} A/2\Big)
	\leq 4^{\vert j \vert} 2A^{-1} c(\rho p_j,T)\vertiii{H}_N
\end{align*}
where the function $c(\cdot,\cdot)$ is the same appearing in Lemma \ref{KVconservative}. 
Going back to \eqref{puffi} we have obtained
\begin{align}\label{conferenza}
P^\omega_{\nu_{0,\rho}}(\Sigma_T^{N}\geq A)
	\leq \sum_{j \in \mathcal J} 4^{\vert j \vert} 2A^{-1} c(\rho p_j,T)\vertiii{H}_N
	\leq \bar c(\rho,T)A^{-1}\sum_{j \in \mathcal J} 4^{\vert j \vert}p_j^{1/2}.
\end{align}
Recall that if $X\sim Poisson(\lambda)$ one has the bound $\P(X\geq t)\leq \e^{-\lambda}({\rm e}\lambda/t)^t$ for all $t>\lambda$ (see for example \cite[Exercise 2.3.3]{V18}). Using also that the number of $j$'s of length $\ell$ is $2^\ell$, we set $\bar\ell=\lceil 81\e bT\rceil$ and compute
\begin{align*}
\sum_{j \in \mathcal J} 4^{\vert j \vert}p_j^{1/2}
	\leq \sum_{\ell=0}^{\bar\ell} 8^\ell + \sum_{\ell=\bar\ell+1}^\infty 8^\ell \Big(\frac{\e bT}{\ell}\Big)^{\ell/2} 
	\leq c \e^{c_2 bT}
\end{align*}
with $c,c_2>0$ absolute constants, which together with \eqref{conferenza} yields the result of the lemma.
\end{proof}

\section{Proof of Theorem \ref{maintheorem}}\label{proofofmaintheorem}

As in Section \ref{sectionexistence}, through the whole section we fix some realization of the underlying graph $\omega\in\Omega$ sampled according to measure $\P$. All the processes in what follows will evolve under measure $P^\omega$, and all the claims have to be intended to be true $\P$--almost surely.

\subsection{An $L^2$ martingale}\label{l2martingale}
In this section we will pave the way for the proof of tightness of the sequence of process $\left((\langle\pi_t^N,G\rangle)_{t\in[0,T]}\right)_N$ and identification of the limit. 
For $G\in\mathcal C_c^\infty(\R^n)$ let us define the process 
\begin{align}\label{dynkin}
	M_t^N=M_t^N(G_N^\l)
	:=\langle\pi_t^N,G_N^{\lambda}\rangle-\langle\pi_0^N,\gnl\rangle -\int_0^t \langle
	\pi_s^N,  L^N G_N^{\lambda}+bG_N^{\lambda} \rangle {\rm d}s \,.
\end{align}
By Lemma \ref{homogenization} and Lemma \ref{stima}, we know that $M^N$ is almost surely well defined when starting from some $\eta_0^N$ satisfying \eqref{domiziano2}. We aim at proving the following result:
\begin{lem}\label{nomartingale}
	Consider a sequence of initial configurations $(\eta_0^N)_{N\in\N}$ satisfying the Domination $\&$ Convergence Assumption.
	For all $\varepsilon>0$ and for all $G\in\mathcal C_c^\infty(\R^n)$ it holds
	\begin{align*}
	\lim_{N\to\infty}  P^\omega\Big(\sup_{0\leq t\leq T}\big|M_t^N\big|\geq \varepsilon\Big)
	=0\,.
	\end{align*}
\end{lem}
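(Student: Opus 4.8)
The plan is to show that $M^N$ is a mean-zero $L^2$ martingale whose predictable quadratic variation has expectation of order $N^{-n}$, and then to conclude by Doob's maximal inequality. Writing $\tilde g:=\gnl(\cdot/N)$ for the corresponding function on $V$, I first observe that $M^N=N^{-n}\tilde M^N$, where $\tilde M^N$ is the Dynkin martingale of the linear functional $f_{\tilde g}(\eta)=\sum_{x\in V}\eta(x)\gnl(x/N)$ under the sped-up generator $\LN$. Indeed $N^{-n}\LN f_{\tilde g}(\eta)=\langle\pi^N(\eta),L^N\gnl+b\gnl\rangle$, which is exactly the compensator in \eqref{dynkin}. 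Since $\gnl$ is not compactly supported, the martingale property does not follow directly from Theorem \ref{existence}; I would instead truncate $\gnl$ to balls $B_R$, apply the martingale identity of Theorem \ref{existence} (adapted to the sped-up generator) to each compactly supported truncation, and pass to the limit $R\to\infty$. Convergence in $L^2(P^\omega)$ is guaranteed by the homogenization norm bounds \eqref{gnll1} and \eqref{regulargnl} together with the first-moment estimate below; controlling the boundary error produced by truncating inside $L^N\gnl$ is the most delicate point of the argument, and is precisely where the $L^2$ control \eqref{regulargnl} is essential.

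Once $M^N$ is known to be an $L^2$ martingale with $M^N_0=0$, its predictable bracket is the time-integral of the carré du champ of $f_{\tilde g}$, rescaled by $N^{-2n}$. Reading off the transition types in \eqref{rates} (with $d=0$), the jump transitions contribute $(\gnl(y/N)-\gnl(x/N))^2$ at rate $\eta(x)N^2r(x,y)$ and the birth transitions contribute $\gnl(x/N)^2$ at rate $b\,\eta(x)$, so that
$$\langle M^N\rangle_T = N^{-2n}\int_0^T\Big[\sum_{x,y\in V}\eta_s^N(x)N^2r(x,y)\big(\gnl(y/N)-\gnl(x/N)\big)^2 + b\sum_{x\in V}\eta_s^N(x)\gnl(x/N)^2\Big]\,{\rm d}s.$$

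It then remains to show $E^\omega[\langle M^N\rangle_T]\to 0$. The key input is the uniform first-moment bound $E^\omega[\eta_s^N(x)]\le (M+\rho)\e^{bT}$ for all $x\in V$, $N\in\N$ and $s\in[0,T]$: by the branching structure the mean density solves $\partial_s m_s = L^N m_s + b\,m_s$, hence $m_s(x)=\e^{bs}\sum_{y\in V}E^\omega[\eta_0^N(y)]\,p_s^N(y,x)$, and since $L^N$ is symmetric with respect to the counting measure the kernel satisfies $p_s^N(y,x)=p_s^N(x,y)$ and sums to one over $y$, while the Domination $\&$ Convergence Assumption gives $E^\omega[\eta_0^N(y)]\le M+\rho$. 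Inserting this bound and using the identity $N^2\sum_{x,y\in V}r(x,y)(\gnl(y/N)-\gnl(x/N))^2 = 2N^n(\gnl,-L^N\gnl)_{\mu_N}$ for the jump part, the expected bracket is controlled by
$$E^\omega[\langle M^N\rangle_T] \le 2(M+\rho)\e^{bT}T\,N^{-n}\big(\gnl,-L^N\gnl\big)_{\mu_N} + b(M+\rho)\e^{bT}T\,N^{-n}\|\gnl\|_{L^2(\mu_N)}^2,$$
which is $O(N^{-n})$ by the bounds \eqref{gnlinner} and \eqref{gnll1} of Lemma \ref{homogenization}. Finally, Doob's $L^2$ maximal inequality gives $E^\omega[\sup_{0\le t\le T}|M_t^N|^2]\le 4E^\omega[(M_T^N)^2]=4E^\omega[\langle M^N\rangle_T]\to 0$, and Chebyshev's inequality $P^\omega(\sup_{0\le t\le T}|M_t^N|\ge\varepsilon)\le\varepsilon^{-2}E^\omega[\sup_{0\le t\le T}|M_t^N|^2]$ yields the claim. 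The main obstacle is thus the rigorous justification in the first step that $M^N$ is a genuine $L^2$ martingale despite the non-compact support of $\gnl$; the estimate of the bracket, by contrast, is a direct computation once the uniform first-moment bound is in hand.
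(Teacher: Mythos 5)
Your overall architecture --- show that $M^N$ is a square-integrable martingale, compute its predictable bracket, bound its expectation by $O(N^{-n})$ via a uniform first-moment estimate together with the homogenization bounds, and finish with Doob and Chebyshev --- is exactly the paper's (Lemma \ref{burntends}): your bracket formula, the Dirichlet-form identity for the jump part, and the final $O(N^{-n})$ estimate coincide with \eqref{penelope}, \eqref{bornvarqu} and \eqref{visciole}. The point where you genuinely deviate from the paper is the justification of the martingale property, and that is where your argument has a gap.

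You propose to truncate the \emph{test function}, applying the martingale identity of Theorem \ref{existence} to $\gnl\mathbf{1}_{B_R}$ and letting $R\to\infty$, and you claim the boundary error is handled by \eqref{regulargnl}. It is not. The compensator error is $\int_0^t\langle\pi^N_s, L^N\bigl(\gnl(\mathbf{1}_{B_R}-1)\bigr)\rangle\,{\rm d}s$, and after taking expectations (using your first-moment bound) you need $\|L^N\bigl(\gnl(\mathbf{1}_{B_R}-1)\bigr)\|_{L^1(\mu_N)}\to 0$. Since the function $\gnl(\mathbf{1}_{B_R}-1)$ is supported outside $B_R$, the only bound available without cancellation is $2N^{2-n}\sum_{x:\,x/N\notin B_R} r(x)|\gnl(x/N)|$, so you would need $\sum_{x\in V}r(x)|\gnl(x/N)|<\infty$. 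Nothing in Lemma \ref{homogenization} gives this: \eqref{gnll1} and \eqref{regulargnl} control $\gnl$ in $L^1(\mu_N)\cap L^2(\mu_N)$ and the \emph{signed} quantity $L^N\gnl$, whose smallness relies on cancellation, not the absolute sums $\sum_y N^2r(x,y)|\gnl(y/N)|$. Because $r(\cdot)$ is unbounded on $V$ (only logarithmically bounded, Lemma \ref{tyran}) and an $L^1\cap L^2$ function need not be summable against a logarithmic weight, this finiteness can fail a priori; it would require pointwise decay of $\gnl$, which is established nowhere. (Your first-moment bound has a similar, milder issue: Kolmogorov's forward equation and the many-to-one formula must themselves be justified for an infinite particle system with unbounded jump rates.) The paper circumvents all of this by truncating the \emph{initial configuration} instead: with finitely many initial particles in $[-a,a]^n$, Proposition \ref{seggiolino} and Corollary \ref{giardinetti} show that all particles a.s.\ remain in a bounded region up to time $T$, so \eqref{napoleon} holds for \emph{any} locally bounded $H$ --- including $\gnl$ itself, with no boundary terms --- and $M^{N,a}$ is an honest integral against compensated Poisson measures with a uniform first-moment bound coming from \eqref{manzoni}; one then lets $a\to\infty$ using monotonicity, the Kipnis--Varadhan estimate of Lemma \ref{stima}, and the completeness of the space of c\`adl\`ag $L^2$ martingales. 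Rerouting your first step through this initial-data truncation, while keeping your bracket computation unchanged, yields a complete proof.
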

In fact, we will not only show Lemma \ref{nomartingale}, but also that $M^N$ is a square integrable martingale which converges in $L^2$ to $0$ and obtain a speed of convergence, see next Lemma \ref{burntends}. To do so, we will use a truncation argument and exploit the results already obtained in Section \ref{finiteparticles} while constructing the process. More precisely, for any $a\in\N$,
consider the process $(\pi^{N,a}_t)_{t\in[0,T]}$ (and the corresponding $(\eta^{N,a}_t)_{t\in[0,T]}$) where the initial configuration of particles is truncated outside the box $[-a,a]^n$, that is, only the particles in the finite set $V\cap [-a,a]^n$ are retained for the initial configuration and all the others are deleted.
By \eqref{siegeauto}, we know that all the particles of the process $\pi^{N,a}$ a.s.~remain in a finite box during time interval $[0,T]$ (recall that $\wpi_t^{B_N}$ was the process restricted to a box of size $B_N$ and that for finite initial conditions $\pi^{N}_t$ was obtained as the restriction to the second coordinate of the limit for $N\to\infty$ of $\wpi_t^{B_N}$, cfr.~Corollary \ref{giardinetti} and \eqref{rabarbaro}).
It follows that the number of births and of jumps is a.s.~finite in a finite time interval and therefore the following equation holds for any locally bounded function $H$: 
\begin{align}\label{napoleon}
\langle\pi_t^{N,a},H\rangle
	=\langle\pi_0^{N,a},H\rangle &+\frac{1}{N^{n}}\int_0^{t} \int_{\R^+}\sum_{x,y\in V}\1{u\leq \eta_s^{N,a}(x)N^2r(x,y)}\big(H(y/N)-H(x/N) \big) \mathcal N^{x,y}({\rm d}s,{\rm d}u) \nonumber\\
	&  +\frac{1}{N^n}\int_0^{t}  \int_{\R^+} \sum_{x\in V}\1{u\leq b\eta_s^{N,a}(x)} \,H(x/N) \, \mathcal Q^{x}({\rm d}s,{\rm d}u)\,.
\end{align}
Notice that we have adopted here a slightly different description of the process for convenience. The underlying Poisson point processes are indexed by sites and not by individuals as before. That is, measures $\mathcal N^{x,y}$ and $\mathcal Q^{x}$ with intensity ${\rm d} s \,{\rm d}u$ on $\R_+^2$ are replacing the previous $\mathcal N_i^{X_s^i,y}$ and $\mathcal N_i^b$.
Equation \eqref{napoleon} can be rewritten as
\begin{align}
	\label{decinit}
	\langle\pi_t^{N,a},H\rangle&=\langle\pi_0^{N,a},H\rangle +\int_0^t \langle\pi_s^{N,a} , L^NH+bH\rangle \, {\rm d}s+M^{N,a}_t(H),
\end{align}
where $M^{N,a}(H)$ is defined by
\begin{align*}
	M^{N,a}_t(H)=& \frac{1}{N^n}\int_0^{t}  \int_{\R^+} \sum_{x,y\in V}\1{u\leq \eta_s^{N,a}(x)N^2r(x,y)}\big(H(y/N)-H(x/N) \big) \widetilde{\mathcal N}^{x,y}({\rm d}s,{\rm d}u) \\
	&  +\frac{1}{N^n} \int_0^{t}  \int_{\R^+} \sum_{x\in V}\1{u\leq b\eta_s^{N,a}(x)} \,H(x/N) \, \widetilde{{\mathcal Q}}^{x}({\rm d}s,{\rm d}u),
\end{align*}
and $\widetilde{\mathcal N}^{x,y}$ and $\widetilde{\mathcal Q}^{x}$ are the compensated 
measures of $\mathcal N^{x,y}$ and ${\mathcal Q}^{x}$.

We turn our attention to $H=G_N^\l$. On the one hand, $\langle\pi_t^{N,a},\gnl\rangle$ increases a.s.~as $a\rightarrow \infty$ to $\langle\pi_t^{N},\gnl\rangle$, which is a.s.~finite (using for example \eqref{allesliebe} and Lemma \ref{homogenization}).
On the other hand,  the fact that $L^N\gnl=\lambda \gnl-H_N$ (cfr.~\eqref{glambda}) and \eqref{allesliebe} ensure that
$$\int_0^t \langle\pi_s^{N} , \vert L^N\gnl \vert+b\gnl \rangle \, ds<\infty \quad \text{a.s.}$$
and it follows by bounded convergence that a.s.
$$
\lim_{a\rightarrow \infty} \int_0^t \langle\pi_s^{N,a} , L^N\gnl +b\gnl \rangle \, {\rm d}s
	=\int_0^t \langle\pi_s^{N} , L^N\gnl +b\gnl \rangle \, {\rm d}s\,.
$$
We obtain from \eqref{decinit} that for any $t\geq 0$, $M^{N,a}_t=M^{N,a}_t(\gnl)$ converges a.s.~as $a\to\infty$  to $M_t^N$, which is given by \eqref{dynkin} and is a.s.~finite.

To wrap up, we have defined a  c\`adl\`ag process $(M_t^N)_{t\in[0,T]}$ satisfying identity \eqref{dynkin} 
and such that, for any $t$, $M^N_t$ is the a.s.~limit of $M^{N,a}_t$, defined as an integral against compensated jump measures.
Let us  check now that these processes are also  square integrable martingale  and that they  tend to $0$ in $L^2$ and probability as $N\to\infty$. This in particular implies Lemma \ref{nomartingale}.



\begin{lem}\label{burntends}
	For any $N\geq 1$ and $a>0$, $M^{N,a}$ and $M^N$
	are c\`adl\`ag  square integrable martingales and, for any $T>0$,
	$$
	E^\omega\Big[\sup_{t\leq T} \big(M^{N,a}_t-M^{N}_t\big)^2\Big]\stackrel{a\rightarrow \infty}{\longrightarrow}0\,.
	$$
	Furthermore, for any $a>0$ and $N\geq 1$,
	\begin{align}\label{visciole}
	E^\omega\Big[\sup_{t\leq T} (M^{N,a}_t)^2\Big]+E^\omega\Big[\sup_{t\leq T}(M^{N}_t)^2\Big]
		\leq \frac{C_T}{N^n}
	\end{align}
	for some constant $C_T$ which only depends on $T$.
\end{lem}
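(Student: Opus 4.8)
The plan is to prove all three assertions through the predictable quadratic variation of the truncated martingales $M^{N,a}$ and then to pass to the limit $a\to\infty$. First I would record that, for fixed $N$ and $a$, the process $(\pi^{N,a}_t)_{t\in[0,T]}$ involves only finitely many particles which, by \eqref{siegeauto} and Proposition \ref{seggiolino}, remain in a bounded region up to time $T$; hence it performs a.s.~finitely many jumps and births, and $M^{N,a}=M^{N,a}(\gnl)$ is a genuine stochastic integral against the compensated measures $\widetilde{\mathcal N}^{x,y},\widetilde{\mathcal Q}^x$. Its predictable quadratic variation is
$$
\langle M^{N,a}\rangle_t=\int_0^t\Big\{\frac1{N^{2n}}\sum_{x,y\in V}\eta^{N,a}_s(x)N^2r(x,y)\big(\gnl(y/N)-\gnl(x/N)\big)^2+\frac{b}{N^{2n}}\sum_{x\in V}\eta^{N,a}_s(x)\gnl(x/N)^2\Big\}\,{\rm d}s .
$$
Taking expectations, the whole estimate reduces to controlling $E^\omega[\eta^{N,a}_s(x)]$.

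The key input is the uniform first-moment bound $E^\omega[\eta^{N,a}_s(x)]\leq(M+\rho)\,\e^{bs}$ for every $x\in V$ and $s\in[0,T]$. This follows exactly as in the computation of Lemma \ref{capogiro}: the first-moment semigroup of the (sped-up) branching random walk is $\e^{bs}\e^{sL^N}$, the constant vector is a left-eigenvector of the $a$-particle block with eigenvalue $b$ irrespective of the speed-up factor $N^2$, and $\e^{sL^N}$ is a positivity-preserving sub-Markov semigroup; since the initial condition is dominated by $\nu_{M,\rho}$, whose mean is $M+\rho$ per site, comparison with the constant datum gives the claimed bound. Plugging this in and using the identities $\sum_{x,y}N^2r(x,y)(\gnl(y/N)-\gnl(x/N))^2=2N^n(\gnl,-L^N\gnl)_{\mu_N}$ and $\sum_{x}\gnl(x/N)^2=N^n\|\gnl\|_{L^2(\mu_N)}^2$, the bounds \eqref{gnlinner} and \eqref{gnll1} of Lemma \ref{homogenization} yield $E^\omega[\langle M^{N,a}\rangle_T]\leq C_T/N^n$ with $C_T$ independent of $a$. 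In particular $M^{N,a}$ is a square-integrable martingale with $E^\omega[(M^{N,a}_T)^2]=E^\omega[\langle M^{N,a}\rangle_T]$, and Doob's $L^2$ inequality gives the bound \eqref{visciole} for $M^{N,a}$.

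For the convergence I would compare two truncation levels $a<a'$: the difference $M^{N,a'}-M^{N,a}$ is precisely the martingale attached to the (still finite) family of particles whose ancestors start in the annulus $[-a',a']^n\setminus[-a,a]^n$, so its expected bracket has the same form as above but with $E^\omega[\eta^{N,a}_s(x)]$ replaced by the mean number $m^{(a,a']}_s(x)$ of particles at $x$ issued from that annulus. Writing the total first moment as $m_s(x)=\e^{bs}\sum_{z\in V}p^N_s(z,x)m_0(z)$ and using that the counting measure is reversible for the walk (so $p^N_s(z,x)=p^N_s(x,z)$ and $\sum_z p^N_s(z,x)\leq1$), the full series $m_s(x)\leq(M+\rho)\e^{bs}$ is finite, hence $m^{(a,a']}_s(x)$ is the tail of a convergent series and tends to $0$ as $a\to\infty$, uniformly in $a'>a$. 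Dominated convergence (dominating by $(M+\rho)\e^{bs}$ against the fixed finite weights furnished by \eqref{gnlinner}--\eqref{gnll1}) then gives $E^\omega[\langle M^{N,a'}-M^{N,a}\rangle_T]\to0$, so $\{M^{N,a}_T\}_a$ is Cauchy in $L^2$ and, by Doob once more, $\{M^{N,a}\}_a$ is Cauchy for the norm $E^\omega[\sup_{t\leq T}(\cdot)^2]^{1/2}$. Since $M^{N,a}_t\to M^N_t$ a.s., the limit is $M^N$, which is therefore a càdlàg square-integrable martingale, the second display of the lemma holds, and \eqref{visciole} for $M^N$ follows by lower semicontinuity of the $L^2$ norm.

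The main obstacle I anticipate is the uniform per-site first-moment bound together with the interchange of the $a\to\infty$ limit with the sums and the time integral: the brackets involve the non-symmetric weights $\eta^{N,a}_s(x)$, so one cannot symmetrize directly, and the finiteness that licenses dominated convergence hinges on the reversibility of the walk with respect to the counting measure and on the homogenization bounds \eqref{gnlinner}--\eqref{gnll1}. The case $b=0$ is handled identically, replacing the factor $\e^{bs}$ by the linear-in-$T$ bound of Remark \ref{octo}.
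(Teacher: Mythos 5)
Your proposal is correct and follows essentially the same route as the paper: compute the predictable bracket of $M^{N,a}$, bound it by a uniform per-site first-moment estimate on $\eta^{N,a}_s$, symmetrize to invoke \eqref{gnlinner} and \eqref{gnll1} for the $C_T/N^n$ bound, apply Doob, and then establish a Cauchy property in $a$ (via monotonicity/dominated convergence) together with completeness of the space of c\`adl\`ag $L^2$ martingales and the a.s.\ convergence $M^{N,a}_t\to M^N_t$ to identify the limit. The only notable (and welcome) deviation is your derivation of the per-site bound $E^\omega[\eta^{N,a}_s(x)]\leq (M+\rho)\e^{bs}$ through the many-to-one formula and reversibility of the walk with respect to the counting measure, which gives a bound genuinely uniform in $x$, whereas the paper cites \eqref{manzoni}, whose constant formally carries an $r(x)$ dependence.
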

\begin{proof}
	We first prove that $M^{N,a}=M^{N,a}(\gnl)$ is a square integrable martingale. Its quadratic variation is
	\begin{align*}
		\langle M^{N,a}\rangle_{t}=&\frac{N^2}{N^{2n}}\int_0^{t} \sum_{x,y\in V}\eta_s^{N,a}(x)r(x,y)\left(G_N^{\lambda}(y/N)-G_N^{\lambda}(x/N) \right)^2{\rm d}s \\
		&  + \frac{1}{N^{2n}}\int_0^{t}\sum_{x\in V}b\eta_s^{N,a}(x)G_N^{\lambda}(x/N)^2\, {\rm d}s\,.
	\end{align*}
	Since 	$E^\omega[\eta_s^{N,a}(x)]\leq E^\omega[\eta_s^N(x)]\leq C\e^{bs}$ (cfr.~equation \eqref{manzoni} and recall that $\eta_s$ is the projection on alive $a$--particles for $Z_s$) we  get 
	\begin{align}\label{penelope}
		E^\omega\left[\langle M^{N,a}\rangle_{t}\right]
			&\leq C'e^{bt}\Big(\frac{N^2}{N^{2n}} \sum_{x,y\in V}r(x,y)\left(G_N^{\lambda}(y/N)-G_N^{\lambda}(x/N) \right)^2 
		+\frac{1}{N^{2n}}\sum_{x\in V}bG_N^{\lambda}(x/N)^2 \Big)\,.
	\end{align}
	Rewriting
	\begin{align*}
		&r(x,y)\left(G_N^{\lambda}(y/N)-G_N^{\lambda}(x/N) \right)^2\\
		&\quad =-r(x,y)G_N^{\lambda}(x/N)\left(G_N^{\lambda}(y/N)-G_N^{\lambda}(x/N) \right)-r(y,x)
		G_N^{\lambda}(y/N)\left(G_N^{\lambda}(x/N)-G_N^{\lambda}(y/N) \right)
	\end{align*}
	we obtain
	\begin{align}\label{bornvarqu}
		E^\omega\left[\langle M^{N,a}\rangle_{t}\right]
			& \leq \frac{ C''\e^{2bt}}{N^n}\big(( G_N^{\lambda},-L^NG_N^{\lambda})_{\mu_N}
		+ \|  G_N^{\lambda} \|_{L^2(\mu_N)}^2 \big)
	\end{align}
	which is finite by \eqref{gnlinner} and \eqref{gnll1}. It follows that $M^{N,a}$ is a square integrable martingale
	and using Doob's inequality we also obtain the relative $L^2$  bound appearing in \eqref{visciole}.
	 
	We prove now by Cauchy criterion  that $M^{N,a}$ converges to some right-continuous square integrable martingale, since the  space of $L^2$ right-continuous martingales is complete (see e.g.~\cite[Lemma 2.1]{IW89}). By uniqueness, this limit  will then have to be 
	$M^N$. Notice that \eqref{visciole} will automatically follow, since the $L^2$ bound for $M^N$ can be derived from that of $M^{N,a}$ taking the limit.
	More precisely let $a<a'$. Then
	\begin{align*}
		M^{N,a'}_t-M^{N,a}_t
			=& \frac{1}{N^{n}}\int_0^{t}  \int_{\R^+} \sum_{x,y\in V}\1{ \eta_s^{N,a}< u\leq \eta_s^{N,a'}(x)N^2r(x,y)}\left(G(y/N)-G(x/N) \right) \widetilde{\mathcal N}^{x,y}({\rm d}s,{\rm d}u) \\
			&+\frac{1}{N^n}\int_0^{t}  \int_{\R^+}\sum_{x\in V}\1{b\eta_s^{N,a}(x) < u\leq b\eta_s^{N,a'}(x)} \,G(x/N) \, \widetilde{{\mathcal Q}}^{x}({\rm d}s,{\rm d}u),
	\end{align*}
	and 
	\begin{align*}
		\langle M^{N,a'}-M^{N,a}\rangle_{t}
			=&\frac{N^2}{N^{2n}}\int_0^{t} \sum_{x,y\in V}\big(\eta_s^{N,a'}(x)-\eta_s^{N,a}(x)\big)\,r(x,y)\left(G_N^{\lambda}(y/N)-G_N^{\lambda}(x/N) \right)^2{\rm d}s \\
		&+\frac{1}{N^{2n}}\int_0^{t}\sum_{x\in V}b\big(\eta_s^{N,a'}(x)-\eta_s^{N,a}(x)\big)G_N^{\lambda}(x/N)^2 {\rm d}s\,.
	\end{align*}
	As $M^{N,a'}-M^{N,a}$ is a square integrable martingale and by monotonicity with respect to $a'$, 
	\begin{align*}
		E^\omega\Big[&\sup_{t\leq T} (M^{N,a'}_t-M^{N,a}_t)^2\Big]\\
			\leq& \frac{4N^2}{N^{2n}}\int_0^{t} \sum_{x,y\in V}E^\omega\big[\eta_s^{N}(x)-\eta_s^{N,a}(x)\big]r(x,y)\left(G_N^{\lambda}(y/N)-G_N^{\lambda}(x/N) \right)^2{\rm d}s \\
		& + \frac{4}{N^{2n}}\int_0^{t}\sum_{x\in V}bE^\omega\big[\eta_s^{N}(x)-\eta_s^{N,a}(x)\big]G_N^{\lambda}(x/N)^2 {\rm d}s
	\end{align*}
	We can apply bounded convergence, see \eqref{penelope} and \eqref{bornvarqu}  to get that the right hand side goes to $0$ as $a$ goes to infinity. 
\end{proof}

\subsection{Proof of tightness}\label{proofoftightness}

Having proved in the previous section that the process $M^N$ defined in \eqref{dynkin} is an $L^2$ martingale makes now the proof of tightness quite straight-forward by using Aldous criterion.

\begin{lem} \label{cittina}
	Consider a sequence of initial configurations $(\eta_0^N)_{N\in\N}$ satisfying the Domination $\&$ Convergence Assumption.
	The sequence of processes $\{(\pi_t^N)_{t\in[0,T]}\}_{N\in\N}$ is tight in $D([0, T ],\mathcal M)$.
\end{lem}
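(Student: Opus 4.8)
The plan is to prove tightness of the real-valued projections and then lift it to the measure-valued level. By a standard criterion for the vague topology on $\mathcal M(\R^n)$, the sequence $\{(\pi_t^N)_{t\in[0,T]}\}_N$ is tight in $D([0,T],\mathcal M)$ as soon as, for every $G\in C_c^\infty(\R^n)$, the real sequence $\{(\langle\pi_t^N,G\rangle)_{t\in[0,T]}\}_N$ is tight in $D([0,T],\R)$, the remaining compact-containment requirement being immediate from the uniform mass bound \eqref{allesliebe}. So I fix $G\in C_c^\infty(\R^n)$ and verify Aldous' criterion for $(\langle\pi_t^N,G\rangle)_{t\in[0,T]}$.

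Rather than work with $G$ directly --- whose image $L^N G$ is poorly behaved --- I would first establish Aldous' criterion for the corrected projection $(\langle\pi_t^N,G_N^\lambda\rangle)_{t\in[0,T]}$, for which the Dynkin decomposition is clean. Indeed, by \eqref{glambda} one has $L^N G_N^\lambda=\lambda G_N^\lambda-H_N$, so \eqref{dynkin} reads
\begin{align*}
\langle\pi_t^N,G_N^\lambda\rangle=\langle\pi_0^N,G_N^\lambda\rangle+\int_0^t\langle\pi_s^N,(\lambda+b)G_N^\lambda-H_N\rangle\,{\rm d}s+M_t^N.
\end{align*}
For a stopping time $\tau\le T$ and $\theta\le\delta$, the increment $\langle\pi_{\tau+\theta}^N,G_N^\lambda\rangle-\langle\pi_\tau^N,G_N^\lambda\rangle$ splits into a drift term bounded in absolute value by $\theta\,W_N$, with $W_N:=\sup_{s\le T}\langle\pi_s^N,|(\lambda+b)G_N^\lambda-H_N|\rangle$, plus the martingale increment $M_{\tau+\theta}^N-M_\tau^N$. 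The martingale part is controlled by Lemma \ref{burntends}: since $E^\omega[\sup_{t\le T}(M_t^N)^2]\le C_T/N^n$, the quantity $\sup_{t\le T}|M_t^N|$ tends to $0$ in probability (this is exactly Lemma \ref{nomartingale}), so it is negligible uniformly in $\tau,\theta$. For the drift part, the family $\{W_N\}_N$ is tight in $\R$: applying the non-conservative Kipnis--Varadhan estimate \eqref{allesliebe} to the nonnegative function $|(\lambda+b)G_N^\lambda-H_N|$, whose triple norm $\vertiii{\cdot}_N$ is bounded uniformly in $N$ by \eqref{gnlinner}--\eqref{regulargnl} of Lemma \ref{homogenization} together with Lemma \ref{tiramisu}, shows that $W_N=O_{\P}(1)$. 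Hence $\theta W_N$ can be made small by choosing $\delta$ small, and Aldous' criterion holds for $(\langle\pi_t^N,G_N^\lambda\rangle)$; the same estimate \eqref{allesliebe} gives the pointwise tightness of $\langle\pi_t^N,G_N^\lambda\rangle$ for each fixed $t$.

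It then remains to transfer tightness from $G_N^\lambda$ back to $G$. Since tightness in $D([0,T],\R)$ is preserved under approximation that is uniform in time and in probability, it suffices to show that $\sup_{t\le T}|\langle\pi_t^N,G-G_N^\lambda\rangle|\to 0$ in $P^\omega$-probability. Bounding $|\langle\pi_t^N,G-G_N^\lambda\rangle|\le\langle\pi_t^N,|G-G_N^\lambda|\rangle$ and applying \eqref{allesliebe} once more with $H=|G-G_N^\lambda|\ge 0$, the problem reduces to proving $\vertiii{\,|G-G_N^\lambda|\,}_N\to 0$. The $L^1$ and $L^2$ parts vanish by \eqref{eqhomogenization1}--\eqref{eqhomogenization2}; the delicate term is $N^{-n}\||G-G_N^\lambda|\|_{L^2(\mu_N)}\,\|L^N|G-G_N^\lambda|\|_{L^2(\mu_N)}$.

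Here I expect the main obstacle: $L^N$ applied to the modulus $|G-G_N^\lambda|$ is not covered by the earlier lemmas. I would handle it through the reverse triangle inequality $\big|\,|f(y)|-|f(x)|\,\big|\le|f(y)-f(x)|$, which yields $\|L^N|G-G_N^\lambda|\|_{L^2(\mu_N)}\le \|\widetilde L^N G\|_{L^2(\mu_N)}+\|\widetilde L^N G_N^\lambda\|_{L^2(\mu_N)}$, where $\widetilde L^N h(x)=\sum_{y}N^2 r(x,y)|h(y/N)-h(x/N)|$. The first term is $O(N^{n/2})$ by the computation already carried out in Lemma \ref{tiramisu}, which only uses the modulus of the increment. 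For the second, a Cauchy--Schwarz step bounds $\|\widetilde L^N G_N^\lambda\|_{L^2(\mu_N)}^2$ by $N^2(\max_{x\in B_N\cap V}r(x))\cdot 2(G_N^\lambda,-L^N G_N^\lambda)_{\mu_N}$, which is $O(N^2\log N)$ by Lemma \ref{tyran} and \eqref{gnlinner}. As both terms are $o(N^n)$ for $n\ge 2$ while $\||G-G_N^\lambda|\|_{L^2(\mu_N)}\to 0$, the product tends to $0$ and $\vertiii{\,|G-G_N^\lambda|\,}_N\to 0$. This concludes the comparison, and with it the proof of tightness.
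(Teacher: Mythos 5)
Your overall route is the same as the paper's: reduce tightness in $D([0,T],\mathcal M)$ to tightness of the real-valued projections, verify Aldous' criterion for the corrected projection $\langle\pi^N_\cdot,\gnl\rangle$ by splitting the decomposition \eqref{dynkin} into a drift term (controlled via the non-conservative Kipnis--Varadhan estimate \eqref{allesliebe} and the resolvent identity \eqref{glambda}) and a martingale term (negligible by Lemma \ref{burntends}/Lemma \ref{nomartingale}), and then transfer tightness from $\gnl$ back to $G$ by showing $\sup_{t\leq T}\langle\pi^N_t,|G-\gnl|\rangle\to 0$ in probability; this is precisely the content of the paper's Lemma \ref{tightness} and its proof of Lemma \ref{cittina}. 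Where you genuinely depart from the paper is in your explicit treatment of the fact that Lemma \ref{stima} must be fed a nonnegative function, so that its triple norm involves $L^N$ applied to a modulus. You are right that this point needs an argument (the paper passes over it silently, writing $\|L^N(\gnl-G)\|_{L^2(\mu_N)}$ where its own statement would require $\|L^N|\gnl-G|\,\|_{L^2(\mu_N)}$), but your patch contains a genuine error.

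The flawed step is the bound $\|\widetilde L^N\gnl\|_{L^2(\mu_N)}^2\leq N^2\big(\max_{x\in B_N\cap V}r(x)\big)\cdot 2(\gnl,-L^N\gnl)_{\mu_N}$. The $L^2(\mu_N)$-norm on the left is a sum over \emph{all} $x\in V$, not over $B_N\cap V$: since $\gnl$ solves the resolvent equation \eqref{glambda}, it is not compactly supported, so the inner sums $\sum_y N^2r(x,y)\big(\gnl(y/N)-\gnl(x/N)\big)^2$ are nonzero for $x$ arbitrarily far from the origin, where the factor $r(x)$ extracted by Cauchy--Schwarz is not controlled by Lemma \ref{tyran} (which only covers $x\in B_N$); indeed $\sup_{x\in V}r(x)=\infty$ almost surely, and no decay estimate for $\gnl$ is available in the paper's toolbox to compensate. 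The same unresolved modulus issue also sits inside your Aldous step, where you assert boundedness of $\vertiii{\,|(\lambda+b)\gnl-H_N|\,}_N$ by citing \eqref{gnlinner}--\eqref{regulargnl} and Lemma \ref{tiramisu}: those results control $L^N\gnl$ and $L^NH_N$, not $L^N$ of the modulus. The clean repair --- and what implicitly justifies the paper's computation --- is to note that the proof of Lemma \ref{KVconservative} only uses the Dirichlet form: the estimate holds with $N^{-n}\|H\|_{L^2(\mu_N)}\|L^NH\|_{L^2(\mu_N)}$ replaced by the smaller quantity $N^{-n}(H,-L^NH)_{\mu_N}$, Cauchy--Schwarz being applied only in the very last line. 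The Dirichlet form contracts under taking absolute values, $(|f|,-L^N|f|)_{\mu_N}\leq(f,-L^Nf)_{\mu_N}$, because $\bigl|\,|f(y)|-|f(x)|\,\bigr|\leq|f(y)-f(x)|$ inside the sum of squared increments. Applying this with $f=G-\gnl$ (and with $f=(\lambda+b)\gnl-H_N$ in the Aldous step) reduces everything to $\|f\|_{L^2(\mu_N)}\|L^Nf\|_{L^2(\mu_N)}$ with no modulus inside $L^N$, at which point Lemmas \ref{homogenization} and \ref{tiramisu} apply exactly as you intended; with this substitution your argument closes and coincides in substance with the paper's.
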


Since $\mathcal M$  is separable and the vague topology in $\mathcal M$ is metrizable, in order to prove tightness of $\pi^N_t$ in $D([0, T ],\mathcal M)$ it is enough to show tightness of $\langle\pi^N_t,G\rangle$ in  $D([0, T ],\R)$ for $G$ in a dense subset of $\mathcal C(\R^n)$, the set of continuous functions in $\R^n$. We consider functions $G\in\mathcal C_c(\R^n)$ that are continuous with compact support. The proof of this fact can be obtained by following \cite[Proposition 1.7, Chapter IV]{KL98} or \cite{MR93}. Furthermore, the tightness of $\langle\pi^N_t,G\rangle$ can be deducted by that of $\langle\pi^N_t,\gnl\rangle$:

%

\begin{lem}\label{tightness}
	For any $G\in\mathcal C^\infty_c(\R^n)$ and for $\P-a.a.$ $\omega$ the following holds.
	Consider a sequence of initial configurations $(\eta_0^N)_{N\in\N}$ satisfying the Domination $\&$ Convergence Assumption.
	Then
	the process $(\langle\pi_t^N,\gnl\rangle)_{t\in[0,T]}$ is well defined for all $N\in\N$. The tightness of $\{(\langle\pi_t^N,\gnl\rangle)_{t\in[0,T]}\}_{N\in\N}$ in $D([0, T ],\R)$ implies the tightness of $\{(\langle\pi_t^N,G\rangle)_{t\in[0,T]}\}_{N\in\N}$ in $D([0, T ],\R)$ and therefore the tightness of $\{(\pi_t^N)_{t\in[0,T]}\}_{N\in\N}$ in $D([0, T ],\mathcal M)$\,.
\end{lem}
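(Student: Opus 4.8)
The plan is to treat the two implications of the statement separately. The passage from tightness of $\{(\langle\pi^N_t,G\rangle)_{t\in[0,T]}\}_N$ for every $G\in\mathcal C_c(\R^n)$ to tightness of $\{(\pi^N_t)_{t\in[0,T]}\}_N$ in $D([0,T],\mathcal M)$ is the standard criterion for measure-valued processes already recalled before the statement (following \cite[Proposition 1.7, Chapter IV]{KL98}), together with the density of $\mathcal C_c^\infty$ in $\mathcal C_c$; so the real content is the first implication. I would reduce it to a single estimate: writing
\[
\langle\pi^N_t,G\rangle=\langle\pi^N_t,\gnl\rangle+\langle\pi^N_t,G-\gnl\rangle,
\]
it suffices to prove that the corrector error is uniformly negligible, namely $\sup_{0\leq t\leq T}|\langle\pi^N_t,G-\gnl\rangle|\to 0$ in $P^\omega$--probability. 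Indeed, a sequence that is the sum of a tight sequence and of a sequence whose supremum tends to $0$ in probability (hence converges to it in the uniform, and a fortiori Skorokhod, metric) is itself tight, so this smallness transfers tightness from $\langle\pi^N,\gnl\rangle$ to $\langle\pi^N,G\rangle$.

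For the key estimate I would invoke the non-conservative Kipnis--Varadhan bound of Lemma \ref{stima}, which applies because the Domination $\&$ Convergence Assumption guarantees that $\eta_0^N$ is dominated by $\nu_{M,\rho}$ in the sense of \eqref{domiziano2}. Since $\pi^N_t$ is a nonnegative measure one has the pointwise bound $|\langle\pi^N_t,G-\gnl\rangle|\leq\langle\pi^N_t,|G-\gnl|\rangle$, so applying \eqref{allesliebe} to the nonnegative test function $H=|G-\gnl|$ would give, for every $A>0$, a bound of the form $P^\omega(\sup_{t\leq T}\langle\pi^N_t,|G-\gnl|\rangle>A)\leq A^{-1}c_1\e^{c_2bT}\vertiii{\,|G-\gnl|\,}_N$. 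The same bound with $H=|\gnl|$ (whose $L^1(\mu_N)$ and $L^2(\mu_N)$ norms are finite by Lemma \ref{homogenization}) yields the asserted well-posedness of $(\langle\pi^N_t,\gnl\rangle)_{t\in[0,T]}$, in agreement with the monotone-limit argument of Section \ref{l2martingale}. It therefore remains to show that the controlling quantity for $H=|G-\gnl|$ tends to $0$.

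Here lies the one delicate point, which I expect to be the main obstacle. The quantity $\vertiii{H}_N^2=\|H\|_{L^1(\mu_N)}^2+N^{-n}\|H\|_{L^2(\mu_N)}\|L^NH\|_{L^2(\mu_N)}$ involves $L^N$ applied to $H=|G-\gnl|$, and because the generator obeys no chain rule the naive triangle bound $\|L^N(G-\gnl)\|_{L^2(\mu_N)}\leq\|L^NG\|_{L^2(\mu_N)}+\|L^N\gnl\|_{L^2(\mu_N)}$ is simply not available once the absolute value is inserted; the Cauchy--Schwarz majorant $\|H\|_{L^2}\|L^NH\|_{L^2}$ in $\vertiii{H}_N$ is thus too lossy for $H=|G-\gnl|$. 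The resolution I would adopt is to retain, in place of that majorant, the Dirichlet form $(H,-L^NH)_{\mu_N}$ that actually appears in the computation of Lemma \ref{KVconservative}: the variance term controlled by Kipnis--Varadhan is, up to the factors $\rho$ and $T$, exactly $N^{-n}(H,-L^NH)_{\mu_N}$. This quantity enjoys the elementary contraction property $(|f|,-L^N|f|)_{\mu_N}\leq(f,-L^Nf)_{\mu_N}$, which follows at once from $\tfrac12\sum_{x,y}\mu_N(x)N^2r(x,y)(|f(y)|-|f(x)|)^2\leq\tfrac12\sum_{x,y}\mu_N(x)N^2r(x,y)(f(y)-f(x))^2$.

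Taking $f=G-\gnl$ and then applying Cauchy--Schwarz only at the end,
\[
(\,|G-\gnl|,-L^N|G-\gnl|\,)_{\mu_N}\leq\|G-\gnl\|_{L^2(\mu_N)}\big(\|L^NG\|_{L^2(\mu_N)}+\|L^N\gnl\|_{L^2(\mu_N)}\big),
\]
and multiplying by $N^{-n}$ the factor $\|G-\gnl\|_{L^2(\mu_N)}\to 0$ by \eqref{eqhomogenization2}, while $N^{-n}\|L^NG\|_{L^2(\mu_N)}\to 0$ by Lemma \ref{tiramisu} and $\|L^N\gnl\|_{L^2(\mu_N)}\leq c(\l,G)$ by \eqref{regulargnl}; hence $N^{-n}(\,|G-\gnl|,-L^N|G-\gnl|\,)_{\mu_N}\to 0$. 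Together with $\|\,|G-\gnl|\,\|_{L^1(\mu_N)}=\|G-\gnl\|_{L^1(\mu_N)}\to 0$ from \eqref{eqhomogenization1}, this forces the controlling quantity to vanish, so $\sup_{0\leq t\leq T}\langle\pi^N_t,|G-\gnl|\rangle\to 0$ in probability and the proof concludes. The only bookkeeping required is to phrase the Kipnis--Varadhan estimate of Lemma \ref{stima} in its Dirichlet-form version, which is precisely what the computation in the proof of Lemma \ref{KVconservative} produces before its final Cauchy--Schwarz step.
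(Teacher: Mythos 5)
Your proposal follows the same route as the paper's proof: the identical decomposition $\langle\pi^N_t,G\rangle=\langle\pi^N_t,\gnl\rangle+\langle\pi^N_t,G-\gnl\rangle$, the reduction to showing $\sup_{0\leq t\leq T}|\langle\pi^N_t,G-\gnl\rangle|\to 0$ in $P^\omega$--probability (this is exactly the paper's display \eqref{tighttight}), the same appeal to Lemma \ref{stima} both for this bound and for well-posedness, and the same inputs (Lemmas \ref{homogenization} and \ref{tiramisu}) to make the controlling quantity vanish; the final passage to tightness in $D([0,T],\mathcal M)$ is quoted from the same criterion. The genuine difference lies in the one step you flag, and there your version is actually more careful than the paper's. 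The paper writes the bound with $\|L^N(\gnl-G)\|_{L^2(\mu_N)}$, i.e.~it applies Lemma \ref{stima} as if the signed function $\gnl-G$ were an admissible test function, whereas that lemma is stated for nonnegative $H$ -- and its proof (the branch-by-branch domination coupling and the union bound) genuinely requires nonnegativity. Taking $H=|\gnl-G|$, as one must, puts $\|L^N|\gnl-G|\,\|_{L^2(\mu_N)}$ into $\vertiii{H}_N$, and since $L^N$ does not commute with the absolute value this term is not controlled by $\|L^NG\|_{L^2(\mu_N)}+\|L^N\gnl\|_{L^2(\mu_N)}$. Your repair is correct: the proof of Lemma \ref{KVconservative} controls, before its final Cauchy--Schwarz step, precisely the Dirichlet form $N^{-n}(H,-L^NH)_{\mu_N}$ rather than the majorant appearing in \eqref{babu}; the domination structure of Lemma \ref{stima} transmits this Dirichlet-form version unchanged; the contraction $(|f|,-L^N|f|)_{\mu_N}\leq(f,-L^Nf)_{\mu_N}$ holds since $\big(|f(y)|-|f(x)|\big)^2\leq\big(f(y)-f(x)\big)^2$ termwise; and only then does one apply Cauchy--Schwarz and the triangle inequality to $f=G-\gnl$, after which \eqref{eqhomogenization1}, \eqref{eqhomogenization2}, \eqref{regulargnl} and Lemma \ref{tiramisu} close the estimate exactly as in the paper. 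In short: same skeleton, but your argument buys rigour at the modest price of restating Lemmas \ref{KVconservative} and \ref{stima} in their (stronger, and in fact more natural) Dirichlet-form formulation, thereby patching a step that the paper's own proof leaves implicit.
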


\begin{proof}
Notice that the combination of Lemma \ref{homogenization} and of Lemma \ref{stima} guarantees that the process $(\langle\pi_t^N,\gnl\rangle)_{t\in[0,T]}$ is well defined. 
Furthermore,   for each $\varepsilon>0$,  	
\begin{align}\label{tighttight}
	\lim_{N\to\infty}P^\omega\Big(\sup_{0\leq t\leq T}\big| \langle\pi_t^N,\gnl\rangle-\langle\pi_t^N,G\rangle \big|\geq \varepsilon  \Big)
		=0\,.
	\end{align}
This fact can be shown by using Lemma \ref{stima} to bound the probability in  \eqref{tighttight} by
\begin{align*}
\varepsilon^{-1} c_1{\rm e}^{ c_2bT} \sqrt{\|\gnl-G\|^2_{L^1(\mu_N)}+N^{-n}\|\gnl-G\|_{L^2(\mu_N)}
\|L^N(\gnl-G)\|_{L^2(\mu_N)}}
\end{align*}
and then applying Lemma \ref{homogenization} and Lemma \ref{tiramisu} to see that this converges to $0$ as $N\to\infty$.
\end{proof}

\begin{proof}[Proof of Lemma \ref{cittina}] By Lemma \ref{tightness} we will just have to prove tightness of $(\langle\pi_t^N,\gnl\rangle)_{t\in[0,T]}$. We will use Aldous criterion, see for example \cite[Section 4: Proposition 1.2 and Proposition 1.6]{KL98}.
Let us work with the set $\mathcal T_N(\theta)$ of couples $(\tau,h)$
such that $\tau$ is
 a stopping time for the process $(\eta_t^N)_{t\in[0,T]}$ and $h\in\R$ is such that   $0\leq h \leq \theta $ and  $\tau+h\leq T$.  
Using identity   \eqref{dynkin} we have
\begin{align}\label{dynkinn}
	\langle\pi_{\tau+h}^N-\pi_{\tau}^N,\gnl\rangle\
		&=\int_{\tau}^{\tau+h} \langle \pi_s^N,  L^N G_N^{\lambda}+ b G_N^{\lambda}\rangle {\rm d}s+ M_{\tau,\tau+h}^N(G),
\end{align}
where $M_{\tau,\tau+h}^N(G)=M_{\tau+h}^N(G)-M_{\tau}^N(G)$. 

Equation \eqref{definitiongnl} allows us to write $L^N G_N^{\lambda}=\lambda (G^{\lambda}_N-G)+\sigma^2\Delta G$, so that
 \ben
 &&\left\vert\int_{\tau}^{\tau+h}  \langle \pi_s^N,  L^N G_N^{\lambda}\rangle\, {\rm d}s\right\vert \leq 
 h\sup_{0\leq s\leq T}  \langle \pi_s^N ,  \lambda \vert G^{\lambda}_N-G \vert  +\vert\sigma^2\Delta G\vert \rangle.
 \een
We want to use Lemma \ref{stima} to this quantity and then take the limit $\theta\downarrow 0$. Using that $\|\gnl-G\|_{L^1(\mu_N)}$, $\|\gnl-G\|_{L^2(\mu_N)}$ and $\|L^N \gnl\|_{L^2(\mu_N)}$ are bounded by Lemma \ref{homogenization}, using that $G$ is compactly supported and using also Lemma \ref{tiramisu}, we obtain
$$
\lim_{\theta\downarrow 0} \sup_{\substack{(\tau,h) \in \mathcal T_N(\theta) \\ N\geq 1} }P^\omega\left( \left\vert\int_{\tau}^{\tau+h}  \langle \pi_s^N,  L^N G_N^{\lambda}\rangle\, {\rm d}s\right\vert \geq \delta\right)
= 0 
$$
for any $\delta>0$.
Similarly we also see that
$$
	\lim_{\theta\downarrow 0} \sup_{\substack{(\tau,h) \in \mathcal T_N(\theta)\\ N\geq 1}} P^\omega\left(\left\vert \int_{\tau}^{\tau+h} \langle \pi_s^N, bG_N^{\lambda}\rangle \, {\rm d}s\right\vert \geq \delta\right)
= 0\,.
$$
Finally we also know that the contribution of $M_{\tau,\tau+h}^N(G)$ is negligible by Lemma \ref{nomartingale}.
All in all equation \eqref{dynkinn} combined with these estimates   yields
$$\lim_{\delta\downarrow 0}  \limsup_{N\rightarrow \infty} \sup_{(\tau,h) \in \mathcal T_N(\theta)} P^\omega\left (\vert \langle\pi_{\tau+h}^N,\gnl\rangle-\langle \pi_{\tau}^N,\gnl\rangle\vert \geq 3\delta\right)=0 $$
for any $\delta>0$, which ends the proof.
\end{proof}

\subsection{Proof of   identification and convergence }\label{identificare}
We are finally ready to identify the limiting points
of  $(\pi_t^N)_{t\in[0,T]}$ as deterministic measure-valued processes with density. We will show that this limit can be  
characterized as the weak solution of \eqref{reactiondiffusion}. 
\begin{lem}\label{gvsg}
	For $\P-a.a.$~$\omega$ the following holds.
	Consider a sequence of initial configurations $(\eta_0^N)_{N\in\N}$ satisfying the Domination $\&$ Convergence Assumption.
	For all $\varepsilon>0$ and $G\in \mathcal C(\R^n)$, it holds
	\begin{align*}
	\lim_{N\to\infty}P^\omega\Big(\sup_{0\leq t\leq T}\Big|\int_0^t \,\langle \pi_s^N,	G_N^\l-G \rangle	\,{\rm d}s\Big|>\varepsilon\Big)
		=0\,.
	\end{align*}
\end{lem}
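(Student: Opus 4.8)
The plan is to trade the running time integral for a single supremum over the time horizon and then invoke the non-conservative Kipnis--Varadhan estimate of Lemma \ref{stima} (equivalently, to read the claim off as the time-integrated form of \eqref{tighttight}). Since $\pi_s^N$ is a nonnegative measure, one has for every realization of the process the deterministic chain of bounds
\begin{align*}
\sup_{0\leq t\leq T}\Big|\int_0^t \langle \pi_s^N,\gnl-G\rangle\,{\rm d}s\Big|
\leq \int_0^T \big|\langle \pi_s^N,\gnl-G\rangle\big|\,{\rm d}s
\leq T\sup_{0\leq s\leq T}\langle \pi_s^N,|\gnl-G|\rangle\,.
\end{align*}
Thus it suffices to control the supremum over $[0,T]$ of the particle process paired against the single, time-independent, nonnegative function $|\gnl-G|$, which is exactly the object estimated in Lemma \ref{stima}. (Since $\gnl$ is defined only for $G\in\mathcal C_c^\infty(\R^n)$ through \eqref{glambda}, I read the statement for such $G$.)

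Applying Lemma \ref{stima} with $H=|\gnl-G|$ and threshold $A=\varepsilon/T$ bounds the probability in question by $\tfrac{T}{\varepsilon}\,c_1{\rm e}^{c_2 bT}\,\vertiii{|\gnl-G|}_N$, so everything reduces to showing that this triple-bar norm tends to $0$. Recalling the definition \eqref{babu} of $\vertiii{\cdot}_N$, its $L^1$ part $\|\gnl-G\|_{L^1(\mu_N)}^2$ vanishes by \eqref{eqhomogenization1}. For the remaining term I would go back to its Kipnis--Varadhan origin, where it bounds the Dirichlet form $(|\gnl-G|,-L^N|\gnl-G|)_{\mu_N}$; by the standard contraction of Dirichlet forms under $|\cdot|$ this is dominated by $(\gnl-G,-L^N(\gnl-G))_{\mu_N}\leq \|\gnl-G\|_{L^2(\mu_N)}\,\|L^N(\gnl-G)\|_{L^2(\mu_N)}$, which reinstates the signed difference. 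I then use $\|\gnl-G\|_{L^2(\mu_N)}\to 0$ from \eqref{eqhomogenization2} (so in particular it stays bounded) and split $\|L^N(\gnl-G)\|_{L^2(\mu_N)}\leq \|L^N\gnl\|_{L^2(\mu_N)}+\|L^N G\|_{L^2(\mu_N)}$: the first summand is $O(1)$ by \eqref{regulargnl} and hence negligible after the $N^{-n}$ prefactor, while the second is precisely controlled by Lemma \ref{tiramisu}, which gives $N^{-n}\|L^N G\|_{L^2(\mu_N)}\to 0$. Combining these estimates yields $\vertiii{|\gnl-G|}_N\to 0$ and closes the argument.

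I expect the main obstacle to be the factor $\|L^N G\|_{L^2(\mu_N)}$: unlike the $L^1$ and $L^2$ norms of $\gnl-G$, it is genuinely not bounded uniformly in $N$, because $L^N$ is the $N^2$-rescaled discretization of $\sigma^2\Delta$. The estimate survives only because this factor is weighted by $N^{-n}$ and multiplied by the vanishing quantity $\|\gnl-G\|_{L^2(\mu_N)}$, with Lemma \ref{tiramisu} supplying the decisive $N^{-n}\|L^N G\|_{L^2(\mu_N)}\to 0$ that is available precisely for $n\geq 2$. A secondary, more technical point is that Lemma \ref{stima} requires a nonnegative test function, which forces the passage to $|\gnl-G|$; the contraction of the Dirichlet form under the absolute value, invoked above, is what reconciles this with the signed quantity $\|L^N(\gnl-G)\|_{L^2(\mu_N)}$, exactly as in the proof of Lemma \ref{tightness} and its estimate \eqref{tighttight}.
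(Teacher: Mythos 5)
Your proof is correct and follows essentially the same route as the paper's: bound the time integral by $T$ times the supremum of $\langle \pi_s^N, |\gnl-G|\rangle$, apply the non-conservative Kipnis--Varadhan estimate of Lemma \ref{stima}, and then show that the resulting triple norm vanishes exactly as in the proof of Lemma \ref{tightness}, i.e.\ via \eqref{eqhomogenization1}, \eqref{eqhomogenization2}, \eqref{regulargnl} and Lemma \ref{tiramisu}. If anything, your handling of the nonnegativity requirement in Lemma \ref{stima} --- passing to $|\gnl-G|$ and reconciling it with the signed difference through the contraction of the Dirichlet form under absolute values --- is more careful than the paper itself, which applies the estimate directly to $\gnl-G$ without comment on this point.
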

\begin{proof} As a direct consequence of Lemma \ref{stima}  it holds
\begin{align*}
P^\omega\Big(\sup_{0\leq t\leq T}\Big|\int_0^t \,\langle \pi_s^N,
	G_N^\l-G \rangle\, {\rm d}s\Big|> \varepsilon\Big)
	\leq \varepsilon^{-1}T c_1 e^{c_2bT}\vert\kern-0.25ex\vert\kern-0.25ex\vert \gnl-G 
	\vert\kern-0.25ex\vert\kern-0.25ex\vert_N
\end{align*}
with $\vertiii{\cdot}$ defined in \eqref{babu}. As in the proof of Lemma \ref{tightness}, one concludes by observing that $\vert\kern-0.25ex\vert\kern-0.25ex\vert \gnl-G 
\vert\kern-0.25ex\vert\kern-0.25ex\vert_N$ goes to $0$ as $N\to\infty$.
\end{proof}

Thanks to 
Lemma \ref{tightness}, we can consider now a limiting value of of  $(\pi_t^N)_{t\in[0,T]}$, call it $(\pi_t)_{t\in[0,T]}$, in the space $D([0, T ],\mathcal M)$, where $\mathcal M=\mathcal M(\R^n) $ is as usual endowed with the vague topology.  
Noticing that \eqref{glambda} is equivalent to 
\begin{align*}
L^NG_N^\l=\l(G_N^\l-G)+\sigma^2\Delta  G\,,
\end{align*}
we can use representation \eqref{dynkin} and put together the results of Lemma \ref{nomartingale} and Lemma  \ref{gvsg}
and {\eqref{tighttight}}
  to infer that for any $\varepsilon>0$
\begin{align}\label{finally}
\lim_{N\to\infty}
	P^\omega\Big(\sup_{t\in[0,T]}\Big|\langle\pi_t^N,G\rangle-\langle\pi_0^N,G\rangle-\int_0^t \langle \pi_s^N, \sigma^2\Delta G + bG \rangle \,{\rm d}s\Big|>\varepsilon\Big)
	=0\,.
\end{align}
As $\langle \pi_t^N,G\rangle$ converges to $\langle  \pi_t,G\rangle$ along a subsequence, we get, for any $G\in \mathcal C_c^{\infty}(\R^n)$,
\begin{equation}
\label{eqlimite}
\langle  \pi_t,G\rangle=\langle  \pi_0,G\rangle +\int_0^t \langle \pi_s  ,\sigma^2\Delta G + bG \rangle\,{\rm d}s
\end{equation}
$\P$--almost surely.
We use now classical techniques to check that the solutions have a density and prove uniqueness of the limiting problem. The result may be classical, even if the fact that our domain is non bounded or our weak formulation make that we do not know the appropriate reference. For   convenience of the reader, we provide the proof.

\begin{lem}\label{halls} 
i) For $\P$-a.a.~$\omega$ the following holds.
Consider a sequence of initial configurations $(\eta_0^N)_{N\in\N}$ satisfying the Domination $\&$ Convergence Assumption for some $\rho_0$. Consider a limiting point $\pi$ of $\pi^N$ in  the space $D([0, T ],\mathcal M)$. Then $\pi$ satisfies Equation \eqref{eqlimite}, with all terms well defined, for any $G$ in 
	$$
	\mathcal G=\{G\in \mathcal C^{\infty}(\R^n)\cap L^1({{\rm d}x}) :  \sigma^2\Delta G  \in L^1({{\rm d}x}) \}.$$
ii)  Equation \eqref{eqlimite} has a unique weak solution in the space cadlag  positive Radon measure valued  function. Besides this solution $\pi$ has a bounded density :
	$\pi_t({\rm d}x)=\rho_t(x){\rm d}x$ a.s., for a.e.~$t\geq 0$,
	and $\rho_\cdot \in L^{\infty}([0,T]\times \R^n, \R_+)$.
\end{lem}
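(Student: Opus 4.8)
The plan is to prove the two assertions in order, first enlarging the class of admissible test functions in \eqref{eqlimite} from $\mathcal C_c^\infty(\R^n)$ to $\mathcal G$, and then exploiting this enlarged class to run a duality argument against the backward heat flow. Throughout I keep $d=0$, so the target equation is $\partial_t\rho=\sigma^2\Delta\rho+b\rho$.

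For part i), fix $G\in\mathcal G$ and introduce cut-offs $\chi_k(\cdot)=\chi(\cdot/k)$ with $\chi\in\mathcal C_c^\infty$, $\chi\equiv1$ on the unit ball, so that $G_k:=G\chi_k\in\mathcal C_c^\infty(\R^n)$ and both $\|G-G_k\|_{L^1({\rm d}x)}\to0$ and $\|\sigma^2\Delta(G-G_k)\|_{L^1({\rm d}x)}\to0$ as $k\to\infty$ (the Leibniz error terms $\nabla\chi_k\cdot\nabla G$ and $G\,\Delta\chi_k$ carry negative powers of $k$ and are supported on the integrable tails $\{|x|\geq k\}$). Since \eqref{eqlimite} holds for each $G_k$, it remains to pass to the limit. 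To this end I would push the Kipnis--Varadhan bound of Lemma \ref{stima} to the limiting process: for a nonnegative smooth majorant $H$ of $|G-G_k|$ one has, letting $N\to\infty$ and using $\vertiii{H}_N\to\|H\|_{L^1({\rm d}x)}$ (this follows from Lemma \ref{homogenization} and Lemma \ref{tiramisu}, because the second term in the definition of $\vertiii{\cdot}_N$ carries a factor $N^{-n}$), a bound of the form $P^\omega\big(\sup_{t\leq T}\langle\pi_t,H\rangle>A\big)\leq A^{-1}c\,\|H\|_{L^1({\rm d}x)}$. Applying this to majorants of $|G-G_k|$ and of $|\sigma^2\Delta(G-G_k)|$ shows $\sup_{t\leq T}|\langle\pi_t,G-G_k\rangle|$ and $\sup_{t\leq T}|\langle\pi_t,\sigma^2\Delta(G-G_k)\rangle|$ tend to $0$ in $P^\omega$--probability; hence every term of \eqref{eqlimite} for $G_k$ converges (along a subsequence, a.s.) to the corresponding term for $G$, all of which are finite. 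A companion mean-measure estimate $E^\omega[\langle\pi_t,\mathbf 1_K\rangle]\leq C|K|$, obtained by passing \eqref{manzoni} to the limit, guarantees that $\pi_t$ has a.s.~at most polynomial growth, so that the pairings against $\mathcal G$-functions are a.s.~well defined.

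For part ii) I would first exhibit a solution with the stated density and then prove it is the only one. Let $(P_t)_{t\geq0}$ be the heat semigroup generated by $\sigma^2\Delta$ (Gaussian convolution) and set $\rho_t:=e^{bt}P_t\rho_0$. Since $\rho_0$ is nonnegative and bounded and $P_t$ is a sub-Markovian contraction on $L^\infty$, one has $0\leq\rho_t\leq e^{bT}\|\rho_0\|_\infty$, so $\rho_\cdot\in L^\infty([0,T]\times\R^n,\R_+)$, and a direct computation using $\tfrac{d}{dt}P_t\rho_0=\sigma^2\Delta P_t\rho_0$ together with the self-adjointness of $P_t$ shows that $\pi_t:=\rho_t\,{\rm d}x$ solves \eqref{eqlimite}. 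For uniqueness, let $\pi$ be any càdlàg positive-Radon-measure solution, fix $t\in(0,T]$ and $\phi\in\mathcal C_c^\infty(\R^n)$, and consider the backward flow $G(s,\cdot):=e^{b(t-s)}P_{t-s}\phi$ for $s\in[0,t]$, which lies in $\mathcal G$ for each $s$ and satisfies $\partial_sG+\sigma^2\Delta G+bG=0$. Discretizing $[0,t]$ and applying \eqref{eqlimite} (valid on $\mathcal G$ by part i) to the frozen functions $G(s_{k+1},\cdot)$, a telescoping Riemann-sum argument yields in the mesh limit $\langle\pi_t,\phi\rangle-\langle\pi_0,G(0,\cdot)\rangle=\int_0^t\langle\pi_s,\partial_sG+\sigma^2\Delta G+bG\rangle\,{\rm d}s=0$. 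As $\langle\pi_0,G(0,\cdot)\rangle=\langle\rho_0\,{\rm d}x,\,e^{bt}P_t\phi\rangle=\langle\rho_t\,{\rm d}x,\phi\rangle$ by self-adjointness of $P_t$, we conclude $\langle\pi_t,\phi\rangle=\langle\rho_t\,{\rm d}x,\phi\rangle$ for all $\phi$, hence $\pi_t=\rho_t\,{\rm d}x$. This gives uniqueness and the bounded density at once.

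The main obstacle is the uniqueness step, precisely the passage to a time-dependent weak formulation: identity \eqref{eqlimite} is given only for time-independent test functions, so in order to differentiate $s\mapsto\langle\pi_s,G(s,\cdot)\rangle$ one must justify the discretization and telescoping, which in turn requires uniform control of the pairings $\langle\pi_s,G(s,\cdot)\rangle$ and $\langle\pi_s,\Delta G(s,\cdot)\rangle$ in the mesh. Because the backward-flow test functions $G(s,\cdot)$ are spread out — only Gaussian-decaying rather than compactly supported — this is exactly the point where the enlarged class $\mathcal G$ from part i), together with the polynomial growth control of $\pi_s$ derived from the mean-measure bound, are indispensable to ensure that all integrals converge and the Riemann sums are controlled uniformly.
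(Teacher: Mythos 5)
Your proposal is sound and its key device --- testing \eqref{eqlimite} against the backward heat flow --- is the same duality idea the paper uses, but the architecture of part ii) is genuinely different. The paper takes $G^T(s,\cdot)=P_{T-s}\varphi$ \emph{without} the growth factor, so the zero--order term $b$ survives, and uniqueness comes from a Gronwall estimate in the norm $\|\mu\|_{TV}=\sup_{f\in\mathcal G,\,\|f\|_{L^1({\rm d}x)}\leq 1}|\langle\mu,f\rangle|$; the bounded density is then obtained by a \emph{separate} argument, passing the Kipnis--Varadhan bound \eqref{allesliebe} to the limit by Portmanteau (this is \eqref{topo}) and upgrading the probability bound to an almost sure one. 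You instead absorb $\e^{b(t-s)}$ into the test function, making the dual equation homogeneous, which identifies every admissible solution with the explicit one, $\pi_t=\e^{bt}P_t\rho_0\,{\rm d}x$: uniqueness, determinism of the limit points and the $L^\infty$ bound on the density all drop out of this single formula. Each route buys something. Yours is shorter, yields an explicit representation, and needs weaker a priori control: your telescoping step only requires $\sup_{t\leq T}\langle\pi_t,\Psi\rangle<\infty$ almost surely for countably many fixed integrable $\Psi$'s (supplied by the limiting Kipnis--Varadhan bound), whereas the paper's Gronwall step needs finiteness of the $TV$--norm, i.e.~an essentially bounded density, which creates a mild chicken-and-egg issue in the paper's own exposition (the limit point is declared deterministic ``since it verifies \eqref{eqlimite}'' before it is known to lie in the class where uniqueness was established). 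The paper's route, conversely, produces a density bound $c(T)\|H\|_{L^1({\rm d}x)}$ intrinsic to the particle system, independent of any explicit solvability of the limit equation --- the more robust statement, e.g.~in the percolated setting of Theorem \ref{maintheorem}' where the diffusion matrix may degenerate (Remark \ref{degenerato}), though your formula $\e^{bt}P_t\rho_0$ still defines a bounded density there as well.

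Two caveats, neither worse than in the paper's own proof. In part i), killing the Leibniz term $\nabla\chi_k\cdot\nabla G$ requires $\int_{|x|\geq k}|\nabla G|\,{\rm d}x=o(k)$; this does hold, because $G,\Delta G\in L^1({\rm d}x)$ force $\nabla G\in L^1({\rm d}x)$ (write $\nabla G=\nabla(1-\Delta)^{-1}\big(G-\Delta G\big)$ and use the integrability of the gradient of the Bessel kernel), but it deserves a line since it is not among your stated hypotheses. Second, as you implicitly acknowledge in your closing paragraph, both your duality argument and the paper's Gronwall argument prove uniqueness only within the class of measure-valued paths whose pairings with $\mathcal G$--functions are well defined (for limit points of $\pi^N$ this is guaranteed by the Kipnis--Varadhan estimate, exactly as you use it); for a completely arbitrary c\`adl\`ag positive-Radon-measure path this is an a priori restriction on the uniqueness class, shared by the paper.
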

The second part of the statement ensures that the limiting processes $\pi$ are deterministic. For the proof of the existence of a density, we exploit our results coming from Kipnis Varadhan estimates, but one may also invoke results focusing on the solution of \eqref{eqlimite}.
\begin{proof}
Let us prove $(i)$ and consider $G\in\mathcal G$ non-negative. Approximate $G$ with non-negative functions $G_k\in \mathcal C^\infty_c(\R^n)$ such that $G_k(x)=G(x)$ for $x\in B_k$, with $B_k$ the ball of radius $k$ centered in the origin, and $G_k(x)\leq G(x)$ for $x\notin B_k$. Then, as $k\to\infty$, dominated convergence theorem yields that $\langle  \pi_t,G_k\rangle \to \langle  \pi_t,G\rangle$, $\langle  \pi_0,G_k\rangle\to \langle  \pi_0,G\rangle$ and $\int_0^t \langle \pi_s  , bG_k \rangle\,{\rm d}s\to \int_0^t \langle \pi_s  , bG \rangle\,{\rm d}s$.
It follows that also $G$ satisfies \eqref{eqlimite}, the r.h.s.~being well defined and integrable by part $(i)$.

 Let us move to $(ii)$. 
Let us move to $(ii)$.  For the purpose, we consider the subspace of signed  Radon measures on $\R^n$ associated with 
the norm   
$$ 
\| \mu  \|_{TV}=\sup_{f \in \mathcal G, \, \| f \|_{L^1({\rm d}x)}\leq 1} \vert \langle\mu ,f\rangle \vert\,.
$$
We also introduce the space $\mathcal G_T$ of functions  $G  : [0,T]\times \R^n\rightarrow \R$ that verify the following properties:
$G(t,.)\in \mathcal G$ for any $t\in [0,T]$; $\sup_{t\in [0,T]} \left\{ \| G(t,.)\|_{L^1({\rm d}x)}+  \| \sigma^2\Delta G (t,.)\|_{L^1({\rm d}x)}\right\}<\infty$; $s\rightarrow G(s,x)$ is differentiable for any 
$x\in \R^n$ and $\sup_{s\in[0, T]} \vert \partial_s G(s,.) \vert \in L^1({\rm d}x)$. 
By approximating $G(\cdot,\cdot)\in \mathcal G_T$ with a function which is piece-wise linear in time and proceeding as in \cite[Section 3]{GJ08}, we observe  that  \eqref{eqlimite}  can be extended to functions in  $\mathcal G_T$. This is the weak form of \eqref{reactiondiffusion}.

Finally, to get uniqueness, consider $G^T(t,x)=P_{T-t}\varphi(x)$ with $\varphi \in \mathcal G$  where $P_tf(x)=\E[f(x+B_t)]$ is the semigroup of the Brownian motion with generator $\sigma^2\Delta$. 
Then $\partial_s G^T(s,\cdot)=-\sigma^2\Delta G^T (s,\cdot)$ and $G^T\in \mathcal G_T$ 
and 
$$\langle  \pi_t,G^T(t,.)\rangle=\langle  \pi_0,G(0,.)\rangle +\int_0^t  \langle \pi_s , bG^T(s,.) \rangle\,{\rm d}s.$$
This implies, for two solutions $\pi^1$ and $\pi^2$ of Equation \eqref{eqlimite} with 
same initial values $\pi_0$, 
$$\langle  \pi^1_T-\pi^2_T,\varphi \rangle=\int_0^T  \langle \pi^1_s-\pi^2_s,  bG^T(s,\cdot) \rangle\,{\rm d}s$$
Adding that 
$ \|G^T(t,.)\|_{L^1({\rm d}x)}=  \|\varphi \|_{L^1({\rm d}x)}$, we obtain 
$$
\| \pi^1_T-\pi^2_T  \|_{TV} \leq b\int_0^T \| \pi^1_s-\pi^2_s  \|_{TV} \, {\rm d}s \,.
$$
Gronwall lemma yields $\pi^1=\pi^2$ and uniqueness of the solution $\eqref{eqlimite}$ in  the space of positive Radon measure is proved. \\

Finally we prove that this solution admits a density. We can exploit the first part $i)$
and see this solution $\pi$ as the limit of our sequence of processes.
Take $H\in \mathcal C^\infty_c$ to be a nonnegative function on $V/N$ belonging to $L^1(\mu_N)$ and $L^2(\mu_N)$.  Letting $N\to\infty$ in equation \eqref{allesliebe}, we invoke Portemanteaux theorem and the fact that the supremum over $[0,T]$ of $\langle \, \cdot\,, H \rangle$ is a continuous functional on $D([0,T],\mathcal M(\R^n))$ to obtain 
\begin{align}\label{topo}
	P^\omega_{\rho_0}\Big(\sup_{0\leq t\leq T}\, \langle  \pi_t, H \rangle \, >A\Big)
	\leq A^{-1} c(T)\|H\|_{L^1({\rm d}x)}
\end{align}
with $c(T)=c_1\e^{c_2bT}$. On the r.h.s.~we have used the fact that $N^{-n} \, \| L^NH  \|_{L^2(\mu_N)}$ converges a.s.~to $0$ using Lemma  \ref{tiramisu} and that $\|H\|_{L^1(\mu_N)}$ tends a.s.~to $\|H\|_{L^1({\rm d}x)}$. For a given $\omega$ the process $\pi_t$ must be deterministic, since it verifies \eqref{eqlimite}. Hence, by \eqref{topo}, we have obtained that
$$
\sup_{0\leq t\leq T}\, \langle  \pi_t, H \rangle 
	\leq c(T)\|H\|_{L^1({\rm d}x)}\,.
$$
for any $H\in \mathcal C^\infty_c$. By  approximation,
this identity can be extended to any non-negative $H\in L^1({\rm d}x)$. 
As a consequence, the process $\pi_t$ is concentrated
on measures absolutely continuous with respect to the Lebesgue measure and we call $\rho_t(x)$ its density at time $t\in[0,T]$ and for any $t\in [0,T]$, $\rho_t\leq C(T)$ a.e. and $\rho \in L^{\infty}([0,T]\times \R^n, \R_+)$.
	
It follows that also $G$ satisfies \eqref{eqlimite}, the r.h.s.~being well defined and integrable by part $(i)$.

\end{proof}

\subsection{Proof of Theorem \ref{maintheorem} with mortality ($d>0$)}\label{verydead}
We want to adapt the proof of Theorem \ref{maintheorem} to the case where particles die at rate $d>0$. 

As mentioned before, the construction of the process $(\eta_t)_{t\in[0,T]}$ is still valid when $d>0$. By following the proof of Section \ref{sectionexistence}
one can prove that  the corresponding measure-valued process satisfies an equivalent of \eqref{colosseo3} that now reads, for $G : \R^n\rightarrow \R$ with compact support,
\begin{align}
\langle \pi_t,G \rangle
	=\langle \pi_0, G\rangle 
	&+\int_0^t \sum_{i\in \mathcal A_s, y \in V}  \big(G(y)  -G(X_s^i)\big) \,\NN_{i}^{X_s^i,y}({\rm d}s) \nonumber\\
	&+\int_0^t \sum_{i\in \mathcal A_s} G(X_s^i)  \,\NN_{i}^{b}({\rm d}s)	-\int_0^t \sum_{i\in \mathcal A_s} G(X_s^i)  \,\NN_{i}^{d}({\rm d}s)	
\end{align}
where $(\NN_i^b)_{i\in\mathcal I}$ is another collection of independent Poisson point measures on $\R_+$ with intensity $d\,{\rm d}t$, independent from the $\NN_{i}^{x,y}$'s and the $\NN_{i}^{b}$'s.

There is no problem in updating the technical tools presented in Section \ref{inputfromhomo} and Section \ref{sectionKV} to the case $d>0$. The homogenization results do not depend on the specific particle dynamics, so the results of Lemma \ref{homogenization} are needed as they are. 
The non-reversible Kipnis-Varadhan estimate Lemma \ref{stima} holds when $d>0$, too. To see that, one can couple the dynamics with $d=0$ and the one with $d>0$ in a way that guarantees that the supremum in equation \eqref{allesliebe} always decreases (for example, one can use the same Poisson processes for generating jumps and births of the particles).

The results of Section \ref{l2martingale} still hold for the process 
\begin{align}\label{dynkin}
M_t^N
	=\langle\pi_t^N,G_N^{\lambda}\rangle-\langle\pi_0^N,\gnl\rangle -\int_0^t \langle
\pi_s^N,  L^N G_N^{\lambda}+(b-d)G_N^{\lambda} \rangle {\rm d}s \,.
\end{align}
In particular, $M^N$ is a square integrable martingale and it satisfies Lemma \ref{nomartingale}. This can be again achieved by the same truncation argument. 
Finally the proofs of tightness, identification and convergence (corresponding to Section \ref{proofoftightness} and Section \ref{identificare}) for $d>0$ follow those of the case $d=0$ as they are fundamentally based on the homogenization results and on the non-reversible Kipnis-Varadhan estimates.

\section{Extension of the results and perspective}\label{randomgraphs}

\subsection{Extension to random graphs}
For the sake of clarity we have made the choice to state and prove our main results for the particle system evolving over the complete graph $\mathcal G=(V,E)$, where $V$ are the points of an homogeneous Poisson point process of intensity $\gamma>0$ on $\R^n$, with $n\geq 2$. In this section we discuss how one can consider a broader class of graphs.  To this end, we will consider graphs $\overline {\mathcal G}=(\overline V,\overline E)$ with $\overline V\subseteq V$ and $\overline E\subseteq E$ obtained under measure $\P$ (eventually enlarging the probability space $\Omega$) by performing a random percolation procedure on the edges of $\mathcal G$.
For a graph $\overline {\mathcal G}$ we say that the particle system evolves on $\overline {\mathcal G}$ when the particles move on the nodes $\overline V$ with transition rates $r(x,y)$ substituted by $\overline r(x,y):=r(x,y)\1{\{x,y\}\in\overline E}$.

\smallskip

The next theorem is the generalization of Theorem \ref{existence}. In this case one can perform {\it any} percolation procedure on the bonds of the complete graph $\mathcal G$. Notice that we include cases where the graph becomes disconnected.
\begin{thm1bis}\label{existencebis}
	For $\P$-a.a.~realizations of the underlying Poisson point process, the following holds. Let $\overline {\mathcal G}=(\overline V,\overline E)$ be any graph such that $\overline V= V$ and $\overline E\subseteq E$. 
	Let $\eta_0$ be a  random variable on  $\N^{\overline V}$ such that $\E[\eta_0(x)]\leq M$ for all $x\in\overline V$, for some $M\in\N$.
	 Then, 
	 for all $T>0$,  
there exists a Markov process $(\eta_t)_{t\in[0,T]}$ with initial value $\eta_0$ and 
	paths in the Skohorod space $\mathcal D([0,T],\N^{\overline V})$ that satisfies the following:
	for functions $G$ compactly supported in $\R^n$, the process $(M^G_t)_{t\geq 0}$ defined by
	$$
	M^G_t=\sum_{x\in \overline V} \eta_t(x)\, G(x)-\sum_{x\in \overline V} \eta_0(x)\, G(x) - \int_0^t  \mathcal L f_G(\eta_s) \,{\rm d}s
	$$
	is a martingale. Here  $f_G:\N^{\overline V}\to \R$ is the function $f_G(\eta)=\sum_{x\in \overline V} G(x)\eta(x)$ and 
	\begin{align}\label{generator}
	\mathcal L f_G(\eta)
	=&\sum_{x,y\in \overline V}\eta(x)r(x,y)\big(G(y)-G(x)\big)+\sum_{x\in \overline V}\eta(x)\big(b- d\big) G(x)\,.
	\end{align}
\end{thm1bis}
\begin{proof}
	The existence of the process on the finite graph as in Section \ref{finitegraph} goes easily through. Also the quantitative estimates of Lemma \ref{capogiro} work with $r(x)$ substituted by the corresponding quantity $\overline r(x)=\sum_{y\in \overline V}\overline r(x,y)$. For the case of a finite number of initial particle, cfr.~Section \ref{finiteparticles}, we notice that Lemma \ref{tyran} is true for the percolated graph since $\overline r(x)\leq r(x)$. This in turn implies the equivalent of the key Proposition \ref{seggiolino} and hence of the equivalent of Corollary \ref{giardinetti}. The existence of the process with an infinite number of initial particles can be checked then by  following the proof of Section \ref{full}.
\end{proof}

\smallskip

Generalizing the results of Theorem \ref{maintheorem} is much more subtle.
In this case one is not authorized to freely percolate the edges of $\mathcal G$. Indeed, our proof crucially relies on the homogenization results discussed in Section \ref{input}. On the other hand, as long as the assumptions that imply homogenization are fulfilled, our machinery continues to work and we obtain the following more general version of Theorem \ref{maintheorem}.


\begin{thm2bis}\label{maintheorembis}
	Consider under $\P$ a random graph $\overline {\mathcal G} =(\overline V,\overline E)$, with $\overline V\subseteq V$ and $\overline E\subseteq E$ 
	verifying conditions (A1),\dots,(A9) of Section \ref{input} for some opportune choices of $\Omega$, $(\theta_g)_{g\in\R^n}$ and $\mu_\omega$. Then the results of Theorem \ref{maintheorem} hold for the particle system evolving on $\overline {\mathcal G}$. In this case, the matrix $\sigma\Delta$ appearing in \eqref{reactiondiffusion} has to be substituted by the matrix $D$ such that, for any $a\in\R^n$,
	\begin{align}\label{variazionale2}
	a\cdot Da = \frac 12\inf_{\psi\in L^\infty(\P_0) }
	\E_{0}\Big[\sum_{y\in \overline V}\bar r(0,y)\big(a\cdot y +\psi(\theta_y\omega)-\psi(\omega)\big)^2\Big]\,.
	\end{align}
\end{thm2bis}

\begin{proof}
	The homogenization results of Lemma \ref{homogenization} are still valid exactly because (A1),\dots, (A9) have been chosen so that they would work. 
	Lemma \ref{tiramisu} works since $\overline V\subseteq V$ and since $\bar r(x,y)\leq r(x,y)$ for all $x,y\in V$. The non-conservative Kipnis-Varadhan estimate of Lemma \ref{stima} carries through to the edge-percolated case since the reversibility of the process without births and deaths is maintained on $\overline {\mathcal G}$. Sections \ref{l2martingale}, \ref{proofoftightness} and \ref{identificare} follow a general strategy that relies on the previous estimates and remains substantially identical for the percolated graph. 
\end{proof}

\begin{rem}\label{degenerato}
	It is possible, in principle, that the diffusion matrix described by \eqref{variazionale2} is degenerate. 
\end{rem}

To give more substance to Theorems \ref{existence}' and \ref{maintheorem}', we exhibit now two well-known models that are obtained from the complete graph $\mathcal G=(V,E)$ via a bond-percolation procedure. For both of them we check that assumptions (A1),\dots,(A9) are in fact satisfied, at least for some range of the parameters.

\smallskip

{\bf Long-range percolation.} Long-range percolation is a well studied random graph model, usually defined on the $\mathbb Z^n$ lattice, see for example \cite{B02, H21} and references therein. Extending its definition to continuous space (as for example in \cite{P91}), we consider again the set of vertices $V$ given by a Poisson point process of intensity $\gamma>0$ and the set of all possible edges $E$. Under $\P$, independently for each $\{x,y\}\in E$ we retain the edge with probability $1-\e^{-\beta\|x-y\|^{-\alpha}}$ and delete it otherwise, with $\alpha,\beta>0$ two parameters of the model. Theorem \ref{existence}'  guarantees that our particle process on this structure is always well-defined. 

To study the hydrodynamic limit as in Theorem \ref{maintheorem}' we need some adjustment. 
The main problem arising from the percolation procedure, in fact, is that the resulting graph might not be connected and (A6), for example, would not stand. To overcome this issue we must consider parameters $\alpha, \beta$ and $\gamma$ that guarantee the existence of a unique infinite component for the graph with $\P$--probability $1$ (see \cite{P91} for results on the existence of an infinite component in continuous long-range percolation). Then we define $\overline V$ to be the set of vertices in the infinite component of the graph and  $\overline E$ to be the set of retained edges connecting points of $\overline V$. 

More precisely, let us use the notation of Section \ref{input}. We let $\Omega=S\times[0,1]^{\N^2}$ where $S$ is the set of locally finite sets of $\R^n$. The probability measure $\P$ on $\Omega$ samples locally finite sets $V(\omega)$ according to the law of a Poisson point process of intensity $\gamma>0$. For each couple of vertices $x\neq y\in V(\omega)$ we also sample under $\P$ independent uniform random variables on $[0,1]$, call them $U_{x,y}=U_{y,x}$. We place an edge $\{x,y\}$ between $x$ and $y$ if and only if $U_{x,y}\geq 1-\e^{-\beta\|x-y\|^{-\alpha}}$. We identify $\overline V(\omega)=\hat\omega$ with the points in the infinite component and let $n_x(\omega)=1$ for $x\in \hat\omega$ and $n_x(\omega)=0$ otherwise. The set of edges of the graph is $\overline E(\omega)=\{\{x,y\}:\,x\neq y\in \overline V(\omega)\}$. The maps $(\theta_g)_{g\in\R^n}$ act on an environment $\omega=\big(\{x_i\}_{i\in\N},\,\{u_{x_j,x_k}\}_{j\neq k\in\N}\big)$ as $\theta_g\omega=\big(\{x_i-g\}_{i\in\N},\,\{u_{x_j-g,x_k-g}\}_{j\neq k\in\N}\big)$. The measure $\mu_\omega$ becomes $\mu_\omega(A)=\sum_{x\in\hat\omega}\delta_x(A)$, so that $n_x(\omega)=1$ if $x$ belongs to the infinite component of the graph and $n_x(\omega)=0$ otherwise. 
The rates are $\overline r(\omega,x,y):=r(x,y)\1{\{x,y\}\in\overline E(\omega)}$.

We show now that with these choices assumptions (A1),\dots,(A8) hold and discuss (A9) after. (A1) comes from the stationarity and ergodicity of the Poisson point process combined with the independence of the percolation procedure. Conditions (A2), (A3) and (A7) are inherited from the underlying Poisson point process. (A4) and (A5) are straightforward. 
(A6) is clear since we have restricted ourselves to the infinite component and every jump within it has positive probability. (A8) can be also deducted from the analogous property for the Poisson point process. The only point left to check is the decay of correlation (A9). Clearly $\E[N_0^2]<\infty$. The fact that $|Cov(N_k,N_{k'})|\leq C_0 |k-k'|^{-1}$ is more delicate, since $N_k$ might depend on the structure of the configuration of the infinite cluster in a far away box $k'+[0,1)^n$. While we believe the bound to be true, we could not find a proper reference in the literature and its proof might be technically involved. One way to avoid this problem, is to consider the parameters $\alpha$ and $\beta$ in the range where {\it{all}} the points of $V(\omega)$ belong to the infinite component $\P$--almost surely.
This happens, for example, when $\alpha<n$. In this case (A9) is clearly verified.

\smallskip

{\bf Scale-free percolation.} Scale-free percolation is an inhomogeneous version of the long-range percolation model. It was originally introduced with nodes placed on the lattice $\mathbb Z^n$ in \cite{DVH13} and then also studied on a Poisson point process in \cite{DW18} and \cite{DS21}. Under measure $\P$, we let $V$ be the realization of a Poisson point process of parameter $\gamma>0$ and assign independently to each vertex $x\in V$ a random weight $W_x\in[1,\infty)$ such that $\P(W_x>w)=w^{-(\tau-1)}L(w)$ for some $\tau>1$ and with $L$ a slowly varying function. Then, independently for each edge of the complete associated graph $\{x,y\}\in E$, we retain the edge in $\overline E$ with probability $1-\e^{-\beta W_xW_y\|x-y\|^{-\alpha}}$ for some $\alpha,\beta>0$ and delete it otherwise. The particle system on the resulting graph is again well defined by Theorem \ref{existence}'.

For Theorem \ref{maintheorem}' to work, we restrict to values of $\alpha,\beta, \gamma$ and $\tau$ such that a unique infinite component $\overline V$ exists with $\P$--probability $1$, see \cite{DW18} for the precise range of parameters. The random graph $\overline {\mathcal G}=(\overline V,\overline E)$ induced on the infinite component equipped with rates $\overline r(x,y)$ verifies (A1),\dots,(A8) with the following choices: $\Omega=S\times[1,\infty)^{\N}\times[0,1]^{\N^2}$, where $S$ is the set of locally finite sets of $\R^n$. The probability measure $\P$ on $\Omega$ samples locally finite sets $V(\omega)$ according to the law of a Poisson point process of intensity $\gamma>0$. For each $x\in V(\omega)$ we sample independently under $\P$ the random weight $W_x$ with the law specified here above. Finally, for each couple of vertices $x\neq y\in V(\omega)$ we also sample under $\P$ independent uniform random variables on $[0,1]$, call them $U_{x,y}=U_{y,x}$. We place an edge $\{x,y\}$ between $x$ and $y$ if and only if $U_{x,y}\geq 1-\e^{-W_xW_y\|x-y\|^{-\alpha}}$. We identify $\overline V(\omega)=\hat\omega$ with the points in the infinite component and let $n_x(\omega)=1$ for $x\in \hat\omega$ and $n_x(\omega)=0$ otherwise. The set of edges of the graph is $\overline E(\omega)=\{\{x,y\}:\,x\neq y\in \overline V(\omega)\}$. The maps $(\theta_g)_{g\in\R^n}$ act on an environment $\omega=\big(\{x_i\}_{i\in\N},\,\{w_{x_\ell}\}_{\ell\in\N},\,\{u_{x_j,x_k}\}_{j\neq k\in\N}\big)$ as $\theta_g\omega=\big(\{x_i-g\}_{i\in\N},\,\{w_{x_\ell-g}\}_{\ell\in\N},\,\{u_{x_j-g,x_k-g}\}_{j\neq k\in\N}\big)$. The measure $\mu_\omega$ becomes $\mu_\omega(A)=\sum_{x\in\hat\omega}\delta_x(A)$, so that $n_x(\omega)=1$ if $x$ belongs to the infinite component of the graph and $n_x(\omega)=0$ otherwise. 
	The rates are $\overline r(\omega,x,y):=r(x,y)\1{\{x,y\}\in\overline E(\omega)}$. All properties (A1),\dots, (A8) can be deducted as for the long-range percolation model. To verify (A9) we have the same problem as before with correlations. To be on the safe side, one can take $\alpha\leq n$ or $\alpha(\tau-1)/n\leq 1$, which guarantees all the points of $V$ to be in the infinite cluster $\P$--almost surely, see \cite{DS21}.

\subsection{Open problems and perspectives}\label{generalizations}
The methods we present in our paper can be easily used for a wider class of models. First of all, we point out that the reason for choosing a Poisson point process as a support is mainly due to the seek of clarity rather than to technical obstacles of dealing with more general settings. It should be possible to replace the Poisson point process with any sufficiently regular simple point process satisfying the nine conditions of \cite{F22}, see Section \ref{input}. Likewise, the transition rates $r(x,y)$ do not have to assume the exact form $\e^{-\|x-y\|}$: any rates decaying sufficiently fast (possibly depending on the dimension $n$ of the space) should work.

\smallskip

A more challenging task would be generalizing the birth and death mechanism of the particles: the techniques being used in this paper rely on the linearity of the birth rate $b$ and of the death rate $d$. By a domination argument, it is possible to show that whenever one chooses a bounded birth rate, the corresponding process is well defined. The methods for proving tightness can be extended, too, but the identification of the limit becomes a more difficult matter. 
%
A  motivating and realistic generalization would be to consider a death rate $d=d_x(\eta)$ that depends on the amount of particles present at  each given site and $b$ constant. A classical example is the local logistic model with  death rate $d_x(\eta)=d+c\eta(x)$, where $c>0$ is a competition factor. In this case, when the population becomes large  on each site,  we expect the limiting density $\rho_t(x)$ to satisfy a reaction-diffusion equation of the kind $\partial_t \rho\,=\,\sigma\Delta \rho +(b-d-c\rho)\rho$. Otherwise, if the population remains of order of magnitude $1$ on each site, we expect rapid stirring (see e.g.~\cite{D95} or \cite{P00}).  
In this case the limiting reaction coefficient should come from an averaging procedure of local population size in quasi-state stationary law.
 This follows  the spirit of the replacement lemma used for zero-range processes in statistical mechanics, see \cite{KL98}, which, roughly put, allows one to compare the particle density on microscopic boxes with the particle density on macroscopic ones.
Another interesting extension would be to consider births (or deaths) that depend via a  non-local kernel on the population size in a surrounding region. This would mimic the inclination of individuals to reproduce less in crowded communities and would give further importance to the spatial aspect of the particle system.
Again we believe that rapid stirring techniques can be applied to tackle this problem.

\smallskip

Finally, in this paper we have worked with a scaling corresponding to a fast motion, compared to the population dynamics or epidemics. In fact, the jump rates carry an additional $N^2$ factor, while births and deaths do not. It would be relevant  for our motivations in epidemiology (ANR project Cadence) and interesting for the branching process analysis to consider less separated scales.   

\bigskip

{\bf Acknowledgment}
The authors are very grateful to Jerome Coville, Alessandra Faggionato and Chi Tran Viet for stimulating discussions and relevant suggestions.

\smallskip

This work  was partially funded by the ANR Cadence ANR-16-CE32-0007 and
Chair “Mod\'elisation Math\'ematique et Biodiversit\'e” of VEOLIA-Ecole polytechnique-MNHN-F.X and by the MIUR Excellence Department Project MatMod@TOV awarded to the Department of Mathematics, University of Rome Tor Vergata. 

\smallskip

M.S.~also thanks the INdAM unit GNAMPA. 

\vspace{1cm}
\bibliography{bibliografia.bib}
\bibliographystyle{abbrvnat}

\end{document}